\documentclass{amsart}
\usepackage{amsfonts,amssymb,amsmath,amsthm,mathrsfs}
\usepackage{url}
\usepackage{enumerate}

\urlstyle{sf}
\newtheorem{theorem}{Theorem}[section]
\newtheorem{lemma}[theorem]{Lemma}

\newtheorem{corollary}[theorem]{Corollary}
\theoremstyle{definition}
\newtheorem{definition}[theorem]{Definition}
\newtheorem{remark}[theorem]{Remark}
\newtheorem{assumption}[theorem]{Assumption}
\numberwithin{equation}{section}

\author[G. Hu]{Guoen Hu}
\address{Guoen Hu, School of Applied Mathematics, Beijing Normal University, Zhuhai 519087,
P. R. China}
\email{huguoen@yahoo.com}
\author[Y. Zhang]{Yandan Zhang}
\address{Yandan Zhang, School of Mathematics and Physics, Qingdao University of Science and Technology, Qingdao 266061
P. R. China}
\email{yandzhang@163.com}
\thanks{The  research of the second author was supported by the Natural Science Foundation of Shandong Province (Grant No. ZR2016AB07)}
\keywords{weighted bound, singular integral operator, nonsmooth kernel, sparse operator, composite operator}
\subjclass{42B20, 47B33}
%% NB There should be only one primary classification, and zero or
%more secondary classifications.
\begin{document}

\title[singular integral]{The composition of  singular integral operators with nonsmooth kernels}

\begin{abstract}Let $T_1$, $T_2$ be two singular integral operators with nonsmooth kernels  introduced by Duong and McIntosh.
In this paper, by establishing certain bi-sublinear sparse dominations, the authors
obtain some quantitative   bounds on $L^p(\mathbb{R}^n,\,w)$ with $p\in(1,\,\infty)$ and $w\in A_p(\mathbb{R}^n)$ for  the composite operator  $T_1T_2$. Some weighted weak type endpoint estimates are also given.\end{abstract}
\maketitle
\section{Introduction}
We will work on $\mathbb{R}^n$, $n\geq 1$. Let $T$ be an $L^2(\mathbb{R}^n)$ bounded linear operator with kernel $K$ in the sense that
for all $f\in L^2(\mathbb{R}^n)$ with compact support and a. e. $x\in\mathbb{R}^n\backslash {\rm supp}\, f$,
\begin{eqnarray}\label{equ:1.1}Tf(x)=\int_{\mathbb{R}^n}K(x,\,y)f(y)dy,\end{eqnarray}
where $K$ is a measurable function on $\mathbb{R}^n\times \mathbb{R}^n\backslash\{(x,\,y):\,x=y\}$. To obtain a weak
$(1,\,1)$ estimate for certain Riesz transforms, and $L^p$ boundedness with $p\in (1,\,\infty)$ of
holomorphic functional calculi of linear elliptic operators on irregular domains, Duong
and McIntosh \cite{duongmc} introduced singular integral operators with nonsmooth kernels  via the following generalized approximation to the
identity.

\begin{definition}\label{defn1.1}
A family of operators $\{A_t\}_{t>0}$ is said to be an approximation to the identity, if for
every $t>0$, $A_t$ can be represented by the kernel $a_t$ in the following sense: for every function $u\in L^p(\mathbb{R}^n)$
with $p\in [1,\,\infty]$ and a. e.  $x\in\mathbb{R}^n$,
$$A_tu(x)=\int_{\mathbb{R}^n}a_t(x,\,y)u(y)dy,$$
and the kernel $a_t$ satisfies that for all $x,\,y\in\mathbb{R}^n$ and $t>0$,
\begin{eqnarray}\label{equ:1.2}|a_t(x,\,y)|\le h_t(x,\,y)=t^{-n/s}h\Big(\frac{|x-y|}{t^{1/s}}\Big),\end{eqnarray}
where $s>0$ is a constant and $h$ is a positive, bounded and decreasing function such that for some constant $\eta>0$,
\begin{eqnarray}\label{equ:1.3}\lim_{r\rightarrow\infty}r^{n+\eta}h(r)=0.\end{eqnarray}
\end{definition}
\begin{assumption}\label{a1.0}
There exists an approximation to the identity $\{A_t\}_{t>0}$ such that the composite
operator $TA_t$  has an associated kernel $K_t$ in the sense of (\ref{equ:1.1}), and there
exists a positive constant $c_1$  such that for all $y\in \mathbb{R}^n$ and $t>0$,
$$\int_{|x-y|\geq c_1t^{\frac{1}{s}}}|K(x,\,y)-K_t(x,\,y)|dx\lesssim 1.$$\end{assumption}
An $L^2(\mathbb{R}^n)$ bounded linear operator with kernel $K$ satisfying Assumption \ref{a1.0} is called a singular
integral operator with nonsmooth kernel, since $K$ does not enjoy smoothness in space
variables. Duong and McIntosh \cite{duongmc} proved that if $T$ is an $L^2(\mathbb{R}^n)$ bounded linear operator with
kernel $K$, and satisfies Assumption \ref{a1.0}, then $T$ is bounded from $L^1(\mathbb{R}^n)$ to $L^{1,\,\infty}(\mathbb{R}^n)$.
Let $A_p(\mathbb{R}^n)$ ($p\in [1,\,\infty)$) be the weight functions class of Muckenhoupt, that is, $w\in A_{p}(\mathbb{R}^n)$ if $w$ is nonnegative and locally integrable, satisfies that
$$[w]_{A_p}:=\sup_{Q}\Big(\frac{1}{|Q|}\int_Qw(x)dx\Big)\Big(\frac{1}{|Q|}\int_{Q}w^{-\frac{1}{p-1}}(x)dx\Big)^{p-1}<\infty
$$
if $p\in(1,\,\infty)$, where the  supremum is taken over all cubes in $\mathbb{R}^n$,   and
$$[w]_{A_1}:=\sup_{x\in\mathbb{R}^n}\frac{Mw(x)}{w(x)},$$
with $M$   the Hardy-Littlewood maximal operator. $[w]_{A_p}$ is called the $A_p$ constant of $w$, see \cite{gra} for properties of $A_p(\mathbb{R}^n)$. For a weight $u\in A_{\infty}(\mathbb{R}^n)=\cup_{p\geq 1}A_p(\mathbb{R}^n)$, we define $[u]_{A_{\infty}}$,  the $A_{\infty}$ constant of $u$,  by
$$[u]_{A_{\infty}}=\sup_{Q\subset \mathbb{R}^n}\frac{1}{u(Q)}\int_{Q}M(u\chi_Q)(x){\rm d}x,$$
see \cite{wil}.
To consider the weighted estimates with $A_p(\mathbb{R}^n)$ boundedness of singular integral operators with nonsmooth kernels, Martell \cite{mar} introduced the following assumptions.
\begin{assumption}\label{a1.1} There exists an approximation to the identity $\{D_t\}_{t>0}$ such that the composite
operator $D_tT$  has an associated kernel $K^t$ in the sense of (\ref{equ:1.1}), and there exist  positive
constants $c_2$ and $\alpha\in (0,\,1]$,  such that for all $t>0$ and $x,\,y\in\mathbb{R}^n$ with $|x-y|\geq c_2t^{\frac{1}{s}}$,
\begin{eqnarray*}&&|K(x,\,y)-K^t(x,\,y)|\lesssim\frac{t^{\alpha/s}}{|x-y|^{n+\alpha}}.
\end{eqnarray*}
\end{assumption}

\begin{assumption}\label{a1.2} There exists an approximation to the identity $\{A_t\}_{t>0}$ such that the composite
operator $TA_t$  has an associated kernel $K_t$ in the sense of (\ref{equ:1.1}), and there
exists a positive constants $c_1$ and $\alpha\in (0,\,1]$, such that for all $t>0$ with $|x-y|\geq c_1t^{\frac{1}{s}}$,
\begin{eqnarray*}&&|K(x,\,y)-K_t(x,\,y)|\lesssim\frac{t^{\alpha/s}}{|x-y|^{n+\alpha}}.
\end{eqnarray*}
\end{assumption}
Martell \cite{mar}
proved that if $T$ is an $L^2(\mathbb{R}^n)$ bounded linear operator, satisfies Assumption \ref{a1.0} and
Assumption \ref{a1.1}, then for any $p\in (1,\,\infty)$ and $w\in A_p(\mathbb{R}^n)$, $T$ is bounded on $L^p(\mathbb{R}^n,\,w)$,
and if $T$ satisfies Assumption \ref{a1.1} and Assumption \ref{a1.2}, then  for $w\in A_1(\mathbb{R}^n)$, $T$ is bounded from $L^1(\mathbb{R}^n,\,w)$ to $L^{1,\,\infty}(\mathbb{R}^n,\,w)$.

In recent years, there has been significant progress in the study of quantitative weighted bounds for classical operators in harmonic analysis. The study in this field was begun by Buckly \cite{bu} for the Hardy-Littlewood maximal operator, and then by Petermichl \cite{pet1,pet2} for Hilbert transform and Riesz transform. Hyt\"onen solved the so-called $A_2$ conjecture and proved that
for a  Calder\'on-Zygmund operator $T$ and $w\in A_2(\mathbb{R}^n)$,
\begin{eqnarray}\label{equ:A2}\|Tf\|_{L^{2}(\mathbb{R}^n,\,w)}\lesssim_{n}[w]_{A_2}\|f\|_{L^{2}(\mathbb{R}^n,\,w)},\end{eqnarray}
and this weighted bound is sharp.
Hyt\"onen and P\'erez improved estimate (\ref{equ:A2}), and proved that
\begin{eqnarray}\label{equ:1.5}\|Tf\|_{L^{p}(\mathbb{R}^n,\,w)}\lesssim_{n,\,p}[w]_{A_p}^{\frac{1}{p}} \big([w]_{A_{\infty}}^{\frac{1}{p'}}+[\sigma]_{A_{\infty}}^{\frac{1}{p}}\big)\|f\|_{L^{p}(\mathbb{R}^n,\,w)},\end{eqnarray}
where and in the following, for $p\in (1,\,\infty)$ and $w\in A_p(\mathbb{R}^n)$, $p'=p/(p-1)$, $\sigma=w^{-\frac{1}{p-1}}$.
For other  works about the quantitative weighted bounds for Calder\'on-Zygmunds,  see \cite{hlp,hp,hp2,ler2,ler3,ler4} and the related references therein.

Associated with the singular integral operator $T$ in (\ref{equ:1.1}), we define the maximal operator $T^*$ by
$$T^*f(x)=\sup_{\epsilon>0}|T_{\epsilon}f(x)|,$$
with $$T_{\epsilon}f(x)=\int_{|x-y|>\epsilon}K(x,\,y)f(y)dy.
$$
Fairly recently, Hu \cite{hu1} considered the quantitative weighted bounds for singular integral operators with nonsmooth kernels,  and proved the following theorem.
\begin{theorem}\label{thm1.1}
Let $T$ be an $L^2(\mathbb{R}^n)$ bounded linear operator with kernel $K$  in the sense of (\ref{equ:1.1}). Suppose that $T$ satisfies Assumption \ref{a1.1} and   Assumption \ref{a1.2}.
\begin{itemize}
\item[\rm (i)] For $p\in (1,\,\infty)$ and $w\in A_{p}(\mathbb{R}^n)$,
\begin{eqnarray}\label{equa:1.6}&&\|Tf\|_{L^p(\mathbb{R}^n,\,w)}\lesssim [w]_{A_p}^{\frac{1}{p}} \big([w]_{A_{\infty}}^{\frac{1}{p'}}+[\sigma]_{A_{\infty}}^{\frac{1}{p}}\big)[\sigma]_{A_{\infty}}
\big\|f\big\|_{L^p(\mathbb{R}^n,\,w)}.\end{eqnarray}
Moreover, if the kernels $\{K^t\}_{t>0}$ in  Assumption \ref{a1.1}  satisfy that for all $t >0$ and $x,\,y\in\mathbb{R}^n$ with
$|x-y|\leq c_2t^{\frac{1}{s}}$,
\begin{eqnarray}\label{equa:1.size}|K^t(x,\,y)|\lesssim t^{-\frac{n}{s}},\end{eqnarray}
then (\ref{equa:1.6}) holds true for the maximal operator $T^*$.
\item[\rm (ii)]
For $w\in A_1(\mathbb{R}^n)$,
\begin{eqnarray}\label{equa:1.7}\|Tf\|_{L^{1,\,\infty}(\mathbb{R}^n,w)}\lesssim [w]_{A_1}[w]_{A_{\infty}}\log^2({\rm e}+[w]_{A_{\infty}})\|f\|_{L^1(\mathbb{R}^n,w)}.\end{eqnarray}
\item[\rm (iii)] For $w\in A_1(\mathbb{R}^n)$ and $\lambda>0$,\begin{eqnarray}\label{equa:1.11}&&w\big(\{x\in\mathbb{R}^n:\,|Tf(x)|>\lambda\}\big)\\
&&\quad\lesssim_{n,q} [w]_{A_1}\log^2({\rm e}+[w]_{A_{\infty}})\int_{\mathbb{R}^n}
\frac{|f(x)|}{\lambda}\log\Big({\rm e}+\frac{|f(x)|}{\lambda}\Big)w(x)dx.\nonumber\end{eqnarray}
Moreover, if the kernels $\{K^t\}_{t>0}$ in  Assumption \ref{a1.1}  satisfy (\ref{equa:1.size}), then the estimate (\ref{equa:1.11}) also holds for $T^*$.
\end{itemize}
\end{theorem}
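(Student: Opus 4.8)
The plan is to run the now-standard two-step scheme: first establish \emph{bi-sublinear sparse dominations} for $T$ (and, under the size condition (\ref{equa:1.size}), for $T^*$), and then read off the weighted estimates from the quantitative weighted theory of sparse operators. Concretely, I would prove that for all bounded, compactly supported $f,g$ there exist sparse families $\mathcal S=\mathcal S(f,g)$ and $\widetilde{\mathcal S}=\widetilde{\mathcal S}(f,g)$ (collections of cubes admitting pairwise disjoint major subsets) such that
\[
|\langle Tf,\,g\rangle|\lesssim \sum_{Q\in\mathcal S}|Q|\,\langle |f|\rangle_{1,\,Q}\,\langle |g|\rangle_{L\log L,\,Q}
\qquad\text{and}\qquad
|\langle Tf,\,g\rangle|\lesssim \sum_{Q\in\widetilde{\mathcal S}}|Q|\,\langle |f|\rangle_{L\log L,\,Q}\,\langle |g|\rangle_{1,\,Q},
\]
where $\langle\cdot\rangle_{L\log L,\,Q}$ is the normalized Luxemburg average over $Q$, together with the same two dominations for $T^*$ when (\ref{equa:1.size}) holds. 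Granting this, part (i) follows, by duality, from the quantitative weighted bound for the first sparse form: the Luxemburg (rather than $L^1$) average on the second factor costs exactly one extra power of $[\sigma]_{A_\infty}$ over the Calder\'on--Zygmund case (\ref{equ:1.5}), as one sees from a Carleson-embedding argument tracking the $A_p$ and $A_\infty$ constants in the spirit of Hyt\"onen--P\'erez and Lerner, and this yields (\ref{equa:1.6}). Part (ii) comes from the weighted weak-$(1,1)$ bound for that same (first) sparse form, proved by a Calder\'on--Zygmund decomposition of $f$ at height $\lambda$ together with the $L^p(w)$ bound from (i); the second logarithm and the factor $[w]_{A_\infty}$ appearing in (\ref{equa:1.7}) are precisely the price of the Luxemburg average over the classical weak-$(1,1)$ constant $[w]_{A_1}\log({\rm e}+[w]_{A_\infty})$. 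Part (iii) comes from the second domination, whose natural endpoint for the positive sparse operator $\sum_{Q\in\widetilde{\mathcal S}}\langle h\rangle_{L\log L,\,Q}\chi_Q$ is an $L\log L(w)\to L^{1,\infty}(w)$ estimate with constant $[w]_{A_1}\log^2({\rm e}+[w]_{A_\infty})$; the $T^*$-versions of (i) and (iii) follow in the same way once the sparse bounds for $T^*$ are available.

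The heart of the matter is the sparse domination, which I would prove by Lerner's recursive scheme built around the grand maximal truncated operator
\[
\mathcal M_T f(x)=\sup_{Q\ni x}\big\|T\big(f\chi_{\mathbb{R}^n\backslash 3Q}\big)\big\|_{L^\infty(Q)}
\]
(with the evident modification for $T^*$). Two ingredients are required. First, a weak-$(1,1)$-type estimate for $\mathcal M_T$, so that the stopping-time construction terminates; this rests on $T$ being of weak type $(1,1)$ --- which holds here because the quantitative kernel decay in Assumption \ref{a1.2} implies Assumption \ref{a1.0} --- and on the standard comparison of $\mathcal M_T$ with $T$ and the maximal operator $M$. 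Second, and crucially, the \emph{local estimate}: for a fixed cube $Q_0$,
\[
\frac1{|Q_0|}\int_{Q_0}\big|T(f\chi_{3Q_0})(x)\big|\,|g(x)|\,dx\lesssim \langle |f|\rangle_{1,\,3Q_0}\,\langle |g|\rangle_{L\log L,\,3Q_0},
\]
together with its mirror image carrying the Luxemburg average on $f$. Since $K$ has no smoothness in the space variables, the usual $|K(x,y)-K(x,y_{Q_0})|$ estimate is unavailable, and one must regularize through an approximation to the identity: writing $T=TA_t+T(I-A_t)$ with $t\sim\ell(Q_0)^s$, the piece $T-TA_t$ has, by Assumption \ref{a1.2}, a Calder\'on--Zygmund-type kernel bound $t^{\alpha/s}|x-y|^{-n-\alpha}$ off the diagonal and is treated as for a smooth kernel, while the remaining piece is controlled via the $L^2$-boundedness of $T$ and the pointwise domination $|a_t(x,y)|\le h_t(x,y)$; since $h$ obeys only the polynomial decay in (\ref{equ:1.3}), a dyadic annular decomposition and the summation of the resulting tails is exactly what forces the Luxemburg average to replace an $L^1$ average. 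The mirror-image estimate --- hence the first displayed domination, with the Luxemburg average on $g$ --- is obtained symmetrically by regularizing on the output side using Assumption \ref{a1.1} and the family $\{D_t\}$. For $T^*$ the extra hypothesis (\ref{equa:1.size}) supplies the near-diagonal information on $K^t$ that is otherwise missing when truncations are present, and with it the same stopping-time argument delivers both sparse bounds for $T^*$.

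I expect the local estimate --- equivalently, the sparse domination itself --- to be the main obstacle, and within it the careful bookkeeping of the annular tails of $h$ and of the error terms produced when $f$ is replaced by $A_tf$ (respectively when $g$ is replaced by the action of the adjoint of $D_t$): this is precisely where the Luxemburg average enters, hence where the extra factor $[\sigma]_{A_\infty}$ in (\ref{equa:1.6}) and the squared logarithms in (\ref{equa:1.7})--(\ref{equa:1.11}) originate, so these steps must be carried out with sharp, not merely qualitative, control of the constants. The remaining points --- the standard reductions to bounded, compactly supported $f$ and to finite sums, the measurability questions for $T^*$, and the passage from the sparse forms to the strong and weak-type weighted conclusions --- are routine, the last being the established quantitative theory of sparse operators and weak-type extrapolation.
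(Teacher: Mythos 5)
This theorem is not proved in the paper at all: it is quoted verbatim from \cite{hu1}, so there is no in-paper argument to compare against. Judged against the machinery the paper actually deploys for its own Theorems \ref{thm1.2}--\ref{thm1.3} (which is the extension of the \cite{hu1} argument to compositions), your architecture is the expected one: a Lerner-type recursion built on the grand maximal operator $\mathcal M_T$, controlled via $\mathcal M_Tf\lesssim MTf+M_{L\log L}f$ (the paper's (\ref{equ:3.-1})), yielding the pointwise sparse bound $MTf(x)\lesssim\sum_{Q\in\mathcal S}\|f\|_{L\log L,\,Q}\chi_Q(x)$ of Remarks \ref{rem3.1} and \ref{rem3.3}, followed by Lemma \ref{lem3.3} for part (i) and Lemma \ref{lem3.4} together with a Calder\'on--Zygmund decomposition regularized through $A_{t_{Q_l}}$ for parts (ii)--(iii).

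There is, however, one concrete error in your deduction of (i). You derive it from the sparse form carrying the Luxemburg average on $g$ and assert that this "costs exactly one extra power of $[\sigma]_{A_\infty}$". It does not: in the notation of Lemma \ref{lem3.3}, replacing $\langle|g|\rangle_Q$ by $\|g\|_{L\log L,\,Q}\lesssim\varepsilon_2^{-1}\langle|g|\rangle_{1+\varepsilon_2,\,Q}$ with $\varepsilon_2\sim(p'[w]_{A_\infty})^{-1}$ costs a factor $[w]_{A_\infty}$, whereas bumping the $f$-average costs $\varepsilon_1^{-1}\sim p[\sigma]_{A_\infty}$. The factor $[\sigma]_{A_\infty}$ in (\ref{equa:1.6}) therefore must come from the form with the Luxemburg average on $f$ --- your second displayed domination, equivalently the pointwise sparse bound --- exactly as in the paper's estimate (\ref{equation3.7}); the form with $L\log L$ on $g$ is not needed anywhere in Theorem \ref{thm1.1} and yields the wrong constant. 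A secondary point: your explanation of where the $L\log L$ average originates (summing the polynomially decaying tails of $h$) is not the operative mechanism; in the known arguments it enters through Kolmogorov's inequality (Lemma \ref{lem2.2}) applied to the weak-type operators arising from the splitting $T=TA_t+T(I-A_t)$, and through $M_{L\log L}\approx M\circ M$. Both issues are repairable, but as written the derivation of (i) would not produce (\ref{equa:1.6}).
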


The purpose of this paper is to consider the quantitative weighted bounds for the composition of singular integral operators with nonsmooth kernels.
Our main results can be stated as follows.
\begin{theorem}\label{thm1.2}
Let $T_1$, $T_2$ be  $L^2(\mathbb{R}^n)$ bounded singular integral operators with nonsmooth kernels. Suppose that $T_1$, $ T_2$ satisfy Assumption \ref{a1.1} and   Assumption \ref{a1.2}. Then for $p\in (1,\,\infty)$ and
$w\in A_{p}(\mathbb{R}^n)$,
\begin{eqnarray}\label{equ:1.10}&&\|T_1T_2f\|_{L^{p}(\mathbb{R}^n,w)}\lesssim[w]_{A_p}^{\frac{1}{p}} \big([w]_{A_{\infty}}^{\frac{1}{p'}}+[\sigma]_{A_{\infty}}^{\frac{1}{p}}\big)\big([w]_{A_{\infty}}+[\sigma]_{A_{\infty}}\big)[\sigma]_{A_{\infty}}
\|f\|_{L^p(\mathbb{R}^n,w)}.\end{eqnarray}Moreover, if the kernels $\{K^t\}_{t>0}$ in  Assumption \ref{a1.1}  satisfy (\ref{equa:1.size}), then the estimate (\ref{equ:1.10}) also holds for $T^*_1T_2$.\end{theorem}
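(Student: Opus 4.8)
\textbf{Plan of the proof of Theorem \ref{thm1.2}.}
The plan is to divide the argument into three stages: reduce the weighted estimate for $T_1T_2$ to a bi-sublinear sparse domination; prove that domination by regularizing \emph{both} factors through the approximations to the identity supplied by Assumptions \ref{a1.1} and \ref{a1.2}; and then deduce (\ref{equ:1.10}) from the weighted bounds for sparse forms. Concretely, the aim of the first two stages is to produce, for every pair of bounded compactly supported functions $f$ and $g$, a sparse family $\mathcal{S}$ (depending on $f$ and $g$) such that
\begin{eqnarray*}
\big|\langle T_1T_2f,\,g\rangle\big|&\lesssim&\sum_{Q\in\mathcal{S}}\langle|f|\rangle_{L\log L,\,Q}\,\langle|g|\rangle_{L\log L,\,Q}\,|Q|\\
&&\quad+\sum_{Q\in\mathcal{S}}\langle|f|\rangle_{L(\log L)^2,\,Q}\,\langle|g|\rangle_{1,\,Q}\,|Q|,
\end{eqnarray*}
where $\langle\cdot\rangle_{\Phi,\,Q}$ denotes the normalized Luxemburg average over $Q$ associated with the Young function $\Phi$. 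The presence of two sums reflects the structure of the argument: in the first, each of $T_1$ and $T_2$ contributes exactly one ``lost level'' of regularity (one extra maximal average), while the second is the term in which the global part of $T_2$ is absorbed into the cube and the residual mass is then acted on by $T_1$, producing two lost levels on the side of $f$.

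Following Lerner's recursive scheme, to get such a domination it suffices to control, for a fixed cube $Q_0$, the grand maximal truncation
$$\mathcal{M}_{T_1T_2,\,Q_0}f(x)=\sup_{Q\ni x,\ Q\subseteq Q_0}\ \operatorname*{ess\,sup}_{\xi\in Q}\ \big|T_1T_2\big(f\chi_{\mathbb{R}^n\setminus 3Q}\big)(\xi)\big|$$
by the two averaging functionals above in the appropriate testing sense. Since neither $K_1$ nor $K_2$ is smooth, the pointwise H\"ormander manipulation is unavailable, so on a cube $Q$ of side length $\ell$ I would fix $t\sim\ell^{s}$ and regularize each factor through its approximations to the identity---replacing $T_1$ by $D_tT_1$ via Assumption \ref{a1.1} and $T_2$ by $T_2A_t$ via Assumption \ref{a1.2}---so that
$$T_1T_2=(D_tT_1)(T_2A_t)+(D_tT_1)\big(T_2-T_2A_t\big)+\big(T_1-D_tT_1\big)T_2.$$
On the leading term $(D_tT_1)(T_2A_t)$, the smoothing provided by $D_t$ and $A_t$ restores enough regularity of the composite kernel (its size and genuine smoothness following from (\ref{equ:1.2})--(\ref{equ:1.3}) and Assumptions \ref{a1.1}--\ref{a1.2}) that the classical oscillation/sparse estimate applies; interposing the two smoothing operators is precisely what upgrades a single Hardy--Littlewood average to an $L\log L$ average on each side, and the same mechanism, applied once the global part of $T_2$ has been absorbed into the cube, produces the $L(\log L)^2$ average on the $f$-side. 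The two error terms are handled by the quantitative closeness $|K_i-K^{t}_i|\lesssim t^{\alpha/s}|x-y|^{-n-\alpha}$ off the diagonal, together with the $L^2$-boundedness of $T_1$ and $T_2$ and, near the diagonal, the size bound (\ref{equa:1.size}) when the maximal truncation is in play; these contributions are then summed over the dyadic scales attached to the stopping cubes generated by Lerner's algorithm.

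Granting the sparse domination, (\ref{equ:1.10}) follows from the weighted estimates for bi-sublinear sparse forms, exactly as in \cite{hu1}: the bilinear $L^1$-sparse form satisfies (\ref{equ:1.5}); replacing the average of $f$ by an $L\log L$, resp.\ $L(\log L)^2$, average multiplies the bound by $[\sigma]_{A_\infty}$, resp.\ $[\sigma]_{A_\infty}^{2}$; and replacing the average of $g$ by an $L\log L$ average multiplies it by $[w]_{A_\infty}$. Adding the two sparse sums therefore produces the factor $\big([w]_{A_\infty}+[\sigma]_{A_\infty}\big)[\sigma]_{A_\infty}$ of (\ref{equ:1.10}). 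For the operator $T^{*}_1T_2$ one runs the same scheme with the maximal truncation in place of $T_1$: the size condition (\ref{equa:1.size}) on $\{K^{t}_1\}$ lets one compare, via a Cotlar-type estimate localized to the stopping cubes, the truncations $T_{1,\epsilon}(T_2f)$ with $T_1(T_2f)$ plus a maximal term that is itself dominated by the same two sparse forms, so that (\ref{equ:1.10}) persists for $T^{*}_1T_2$.

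The main obstacle is the second stage. Because both $K_1$ and $K_2$ fail to be smooth, $T_1T_2$ must be regularized simultaneously on the left and on the right, and the leading kernel that results is a four-fold composition; organizing the several error terms coming from $T_i-D_tT_i$ and $T_i-T_iA_t$ so that each is dominated by one of the two sparse forms with the correct Young function, and, more delicately, carrying out the dyadic summation over stopping scales without losing more than one power of $[w]_{A_\infty}$ and two powers of $[\sigma]_{A_\infty}$, is the crux. A bound obtained by merely composing the single-operator estimate (\ref{equa:1.6}) with itself would give a strictly larger $A_\infty$-exponent than the one in (\ref{equ:1.10}), so the improvement must come from treating $T_1T_2$ as a single object---which is exactly what the bi-sublinear sparse domination above encodes.
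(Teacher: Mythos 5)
Your plan is correct and follows essentially the same route as the paper: a bi-sublinear sparse domination of $\langle T_1T_2f,g\rangle$ by exactly the two forms $\mathcal{A}_{\mathcal{S};L(\log L)^2,L^1}+\mathcal{A}_{\mathcal{S};L\log L,L\log L}$, obtained via Lerner's stopping-time scheme with grand maximal truncations and the approximations to the identity compensating for the nonsmooth kernels, followed by the known $A_p$--$A_\infty$ bounds for sparse forms with Luxemburg averages. The only cosmetic difference is that the paper dominates $MT_1T_2$ (so that $T_1^*T_2\lesssim MT_1T_2f+MT_2f$ handles the maximal truncation directly) rather than running a separate Cotlar-type comparison, but this does not change the substance.
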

\begin{theorem}\label{thm1.3}Let $T_1$, $T_2$ be  $L^2(\mathbb{R}^n)$ bounded singular integral operators with nonsmooth kernels. Suppose that $T_1$, $ T_2$ satisfy Assumption \ref{a1.1} and   Assumption \ref{a1.2}.
Then for any $w\in A_1(\mathbb{R}^n)$ and $\lambda>0$,
\begin{eqnarray}\label{equ:1.11}&&w\big(\{x\in\mathbb{R}^n:\,|T_1T_2f(x)|>\lambda\}\big)\\
&&\quad\lesssim_{n}[w]_{A_1}[w]^2_{A_{\infty}}\log^{2}({\rm e}+[w]_{A_{\infty}})\int_{\mathbb{R}^n}\frac{|f(x)|}{\lambda}\log\Big(
{\rm e}+\frac{|f(x)|}{\lambda}\Big)w(x)dx,\nonumber
\end{eqnarray}and
\begin{eqnarray}\label{equ:1.12}&&w\big(\{x\in\mathbb{R}^n:\,|T_1T_2f(x)|>\lambda\}\big)\\
&&\quad\lesssim_{n}[w]_{A_1}[w]_{A_{\infty}}\log^{2}({\rm e}+[w]_{A_{\infty}})\int_{\mathbb{R}^n}\frac{|f(x)|}{\lambda}\log^2\Big(
{\rm e}+\frac{|f(x)|}{\lambda}\Big)w(x)dx.\nonumber\end{eqnarray}
%Moreover,  if the kernels $\{K^t\}_{t>0}$ in  Assumption \ref{a1.1}  satisfy (\ref{equa:1.size}), then the estimate (\ref{equ:1.12}) %also holds for $T^*_1T_2$.
\end{theorem}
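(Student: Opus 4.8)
The plan is to derive both inequalities from a bi-sublinear sparse domination for the composite operator; the argument then has two stages, first producing that sparse bound and then running the weak-type machinery with careful bookkeeping of the $A_1$ and $A_\infty$ constants. For the first stage I would prove that for every pair of bounded compactly supported $f,g$ there is a sparse family $\mathcal{S}=\mathcal{S}(f,g)$ of dyadic cubes such that
\[
|\langle T_1T_2f,\,g\rangle|\;\lesssim\;\sum_{Q\in\mathcal{S}}|Q|\,\|f\|_{L\log L,\,Q}\,\|g\|_{L\log L,\,Q},
\]
where $\|h\|_{L\log L,\,Q}$ denotes the normalized Luxemburg average of $h$ over $Q$. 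Heuristically each factor $T_i$, being a singular integral with nonsmooth kernel, carries exactly one logarithmic loss, precisely as in the sparse bound underlying Theorem \ref{thm1.1}(iii); in the composition these two losses are distributed, the loss of the inner operator $T_2$ landing on the input slot and the loss of the outer operator $T_1$ on the output slot, which is what makes the form $L\log L$ in both variables. To prove it I would carry out a grand-maximal-truncation (local-mean-oscillation) argument adapted to $T_1T_2$: on each generation of stopping cubes replace $T_1$ on the left by $D^{(1)}_tT_1$ and $T_2$ on the right by $T_2A^{(2)}_t$, where $\{D^{(1)}_t\}$ and $\{A^{(2)}_t\}$ are the approximations to the identity furnished by Assumptions \ref{a1.1} and \ref{a1.2} for $T_1$ and $T_2$; the errors are then products of two kernel differences, controlled using the $L^2$-boundedness of $T_1,T_2$ and the decay \eqref{equ:1.3} of $h$, and one iterates. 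The outputs are genuinely $L\log L$ averages rather than plain averages precisely because the smoothing kernels $h_t$ are only integrable.

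Granting the sparse domination, the endpoint estimates follow by the standard weak-type-via-sparse-form argument. Fix $w\in A_1(\mathbb{R}^n)$, $\lambda>0$, and a compact $F\subset\{|T_1T_2f|>\lambda\}$. First discard the set on which a maximal function of $f/\lambda$ of the order matching the intended data bump exceeds a fixed constant; its $w$-measure is controlled by the weighted weak-type bound for that maximal operator, with constant $[w]_{A_1}$, and this contribution is already subordinate to the right-hand sides of \eqref{equ:1.11}--\eqref{equ:1.12}. On the remaining part one tests the sparse form against $g=w\chi_F$, so that
\[
\lambda\,w(F)\;\lesssim\;\sum_{Q\in\mathcal{S}}|Q|\,\|f\|_{L\log L,\,Q}\,\|w\chi_F\|_{L\log L,\,Q},
\]
and then uses $\langle w\rangle_Q\lesssim[w]_{A_1}\operatorname*{ess\,inf}_Q w$ together with the sharp $A_\infty$ estimate $\|w\chi_F\|_{L\log L,\,Q}\lesssim[w]_{A_\infty}\log\!\big(\mathrm{e}+\tfrac{w(Q)}{w(F\cap Q)}\big)\langle w\chi_F\rangle_Q$. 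Splitting the Carleson-type sum along the stopping structure of the averages $\|f\|_{L\log L,\,Q}$ and summing yields $[w]_{A_1}[w]_{A_\infty}\log^2(\mathrm{e}+[w]_{A_\infty})$ times an $L\log L$ integral of $f/\lambda$, with one logarithmic factor still to be accounted for. This leftover factor can be absorbed in two ways: either into the weight, by re-expressing $\log\big(\mathrm{e}+\tfrac{w(Q)}{w(F\cap Q)}\big)$ via the reverse-Hölder/$A_\infty$ inequality as $\lesssim[w]_{A_\infty}\log\big(\mathrm{e}+\tfrac{|Q|}{|F\cap Q|}\big)$, which costs an extra power $[w]_{A_\infty}$ while leaving the data bump at $L\log L$ --- this gives \eqref{equ:1.11} --- or into the data, where the surviving $\log\big(\tfrac{|Q|}{|F\cap Q|}\big)$ is summed up against $f$ producing one extra $\log(\mathrm{e}+|f|/\lambda)$ and keeping the weight dependence at a single power of $[w]_{A_\infty}$ --- this gives \eqref{equ:1.12}.

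The genuine obstacle is the first stage. Unlike a single operator with nonsmooth kernel, $T_1T_2$ is a second-order object: the smoothing error $T_1T_2-D^{(1)}_tT_1T_2$ (and likewise $T_1T_2-T_1T_2A^{(2)}_t$) no longer has an integrable kernel difference, and control is regained only after inserting the \emph{second} smoothing and estimating a composition of two kernel differences; it is precisely this composition that forces the $L\log L$ (rather than $L^1$) gain, and the bookkeeping must be arranged so that the family produced by the iteration remains sparse. A secondary, purely quantitative, point is to carry the exact powers of $[w]_{A_1}$, $[w]_{A_\infty}$ and of $\log(\mathrm{e}+[w]_{A_\infty})$ through the Carleson summation so as to land exactly on the constants in \eqref{equ:1.11} and \eqref{equ:1.12}; here the bi-sublinear (rather than merely bilinear) nature of the sparse form is what is used, since $f$ and $g=w\chi_F$ enter asymmetrically and are bumped to different orders.
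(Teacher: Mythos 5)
Your proposal has a genuine gap at its foundation: the claimed symmetric sparse bound
$|\langle T_1T_2f,g\rangle|\lesssim\sum_{Q\in\mathcal{S}}|Q|\,\|f\|_{L\log L,Q}\|g\|_{L\log L,Q}$
is not what the composition actually satisfies, and the heuristic that the two logarithmic losses ``distribute'' to the two slots is wrong. In the iterated stopping-time construction the diagonal part (the analogue of $G_0$ and $H_{0,Q_0}$ in the proof of Theorem \ref{thm3.2}) is controlled pointwise by $\|f\|_{L(\log L)^2,\,27Q}$, because the grand maximal operator $\mathcal{M}^{**}_{T_1T_2}$ is dominated by $M_{L(\log L)^2}f$ (Lemma \ref{lem3.1}, Remark \ref{rem3.2}) and the level-set estimate of Theorem \ref{thm2.1} for the composition already costs $\log^{k-1}$ with $k=2$; both logarithms therefore land on the $f$-slot and pair with a plain average of $g$. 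The correct form is the two-term bound $\mathcal{A}_{\mathcal{S};L(\log L)^2,L^1}+\mathcal{A}_{\mathcal{S};L\log L,L\log L}$ of Theorem \ref{thm3.2}. A quick sanity check shows your form is too strong: feeding it into Lemma \ref{lem3.4} with $\beta_1=\beta_2=1$ and then running the Hyt\"onen--P\'erez argument would give
$w(\{|T_1T_2f|>\lambda\})\lesssim[w]_{A_1}[w]_{A_\infty}\log^2({\rm e}+[w]_{A_\infty})\int\frac{|f|}{\lambda}\log({\rm e}+\frac{|f|}{\lambda})w$,
which is strictly stronger than \emph{both} (\ref{equ:1.11}) and (\ref{equ:1.12}); nothing in the hypotheses supports such an improvement, and your sketch (inserting $D_t^{(1)}T_1$ and $T_2A_t^{(2)}$ and ``estimating a composition of two kernel differences'') does not produce it.

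The second gap is in stage two. With the correct sparse form, Lemma \ref{lem3.4} yields only the Fefferman--Stein inequality (\ref{equation3.12}), hence (\ref{equ:1.12}); it does not yield (\ref{equ:1.11}). The trade of one logarithm on the data for one extra power of $[w]_{A_\infty}$ is not achieved by re-expressing $\log({\rm e}+w(Q)/w(F\cap Q))$ through reverse H\"older inside the Carleson sum — that step is not justified and is not how the constant $[w]_{A_\infty}^2$ arises. What the paper actually does is prove the separate inequality (\ref{equation3.13}), with weight bump $M_{L(\log L)^{2+\epsilon}}u$ and only a single $\log$ on the data, by a full Calder\'on--Zygmund decomposition of $f$: the bad part is split into the four terms ${\rm W}_1,\dots,{\rm W}_4$ using the approximation to the identity $A_{t_{Q_l}}$, and these are estimated via the dual weighted bound of Corollary \ref{c3.1}, the single-operator estimates (\ref{equation3.9})--(\ref{equation3.11}), and the local $L^2$ bound (\ref{equ:2.local}). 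None of this machinery appears in your proposal, so (\ref{equ:1.11}) remains unproved even granting a sparse domination.
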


We remark that the properties of compositions of singular integral operators were  considered by \cite{bb},
and  many other authors, see \cite{hu3,str,dilu,ober,phst}.

In what follows, $C$ always denotes a
positive constant that is independent of the main parameters
involved but whose value may differ from line to line. We use the
symbol $A\lesssim B$ to denote that there exists a positive constant
$C$ such that $A\le CB$.  Specially, we use $A\lesssim_{n,p} B$ to denote that there exists a positive constant
$C$ depending only on $n,\,p$ such that $A\le CB$. Constant with subscript such as $c_1$,
does not change in different occurrences. For any set $E\subset\mathbb{R}^n$,
$\chi_E$ denotes its characteristic function.  For a cube
$Q\subset\mathbb{R}^n$ and $\lambda\in(0,\,\infty)$, we use $\ell(Q)$ (${\rm diam}Q$) to denote the side length (diamter) of $Q$, and
$\lambda Q$ to denote the cube with the same center as $Q$ and whose
side length is $\lambda$ times that of $Q$. For $x\in\mathbb{R}^n$ and $r>0$, $B(x,\,r)$ denotes the ball centered at $x$ and having radius $r$. For  $\beta\in [0,\,\infty)$,  cube $Q\subset \mathbb{R}^n$ and a suitable function $g$, $\|g\|_{L(\log L)^{\beta},\,Q}$ is the norm defined by
$$\|g\|_{L(\log L)^{\beta},\,Q}=\inf\Big\{\lambda>0:\,\frac{1}{|Q|}\int_{Q}\frac{|g(y)|}{\lambda}\log^{\beta}\Big({\rm e}+\frac{|g(y)|}{\lambda}\Big)dy\leq 1\Big\}.$$
We denote $\|g\|_{L(\log L)^{0},\,Q}$ by $\langle |g|\rangle_{Q}$. For $r\in (0,\,\infty)$, we set $\langle |g|\rangle_{r, Q}=\big(\langle|g|^r\rangle_{Q}\big)^{\frac{1}{r}}.$

\section{Some endpoint estimates}
For $\beta\in [0,\,\infty)$, let $M_{L(\log L)^{\beta}}$ be the maximal operator defined by
 $$M_{L(\log L)^{\beta}}g(x)=\sup_{Q\ni x}\|g\|_{L(\log L)^{\beta},\,Q}.$$
For simplicity, we denote $M_{L(\log L)^{1}}$ by $M_{L\log L}$. Carozza and  Passarelli di Napoli \cite{cana} proved that for $\alpha,\,\beta\in [0,\,\infty)$,
\begin{eqnarray}\label{equ:2.1}
M_{L(\log L)^{\alpha}}\big(M_{L(\log L)^{\beta}}f\big)(x)\approx M_{L(\log L)^{\alpha+\beta+1}}f(x).
\end{eqnarray}
Also, we have that for any $\lambda>0$,
\begin{eqnarray}\label{equ:2.2}\big|\{x\in\mathbb{R}^n:\,M_{L(\log L)^{\beta}}g(x)>\lambda\}\big|\lesssim \int_{\mathbb{R}^n}\frac{|g(x)|}{\lambda}\log^{\beta} \Big({\rm e}+\frac{|g(x)|}{\lambda}\Big)dx.\end{eqnarray}
This section is devoted to the endpoint estimates for the  composite operators $M_{L(\log L)^{\beta}}T_2$ and $M_{L(\log L)^{\beta}}T_1T_2$, with $T_1,\,T_2$   singular integral operators with nonsmooth kernels. These endpoint estimates play important roles in the proof of Theorem \ref{thm1.2} and are of independent interest. To begin with, we give some preliminary lemmas.
\begin{lemma}\label{lem2.1}
Let $p_0\in (1,\,\infty)$, $\beta,\,\varrho\in [0,\,\infty)$ and $S$ be a sublinear operator. Suppose that
$$\|Sf\|_{L^{p_0}(\mathbb{R}^n)}\le A_1 \|f\|_{L^{p_0}(\mathbb{R}^n)},$$
and for all $\lambda>0$,
$$\big|\{x\in\mathbb{R}^n:\,|Sf(x)|>\lambda\}\big|\le A_2 \int_{\mathbb{R}^n}\frac{|f(x)|}{\lambda}
\log ^{\varrho}\Big({\rm e}+\frac{|f(x)|}{\lambda}\Big)dx.$$
Then for two  cubes $Q_2,\,Q_1\subset \mathbb{R}^n$,
\begin{eqnarray*}&&\int_{Q_1}|S(f\chi_{Q_2})(x)|\log^{\beta}\big({\rm e}+|S(f\chi_{Q_2})(x)|\big)dx\\
&&\quad\lesssim |Q_1|+(A_1^{p_0}+A_2)\int_{Q_2}|f(x)|\log^{\beta+\varrho+1} ({\rm e}+|f(x)|) dx.\end{eqnarray*}
\end{lemma}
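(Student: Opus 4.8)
The plan is to reduce the claimed inequality to a bound for the distribution function of $g:=S(f\chi_{Q_2})$ over the cube $Q_1$, and then to run a Calder\'on--Zygmund type splitting at each height. Put $\phi(t)=t\log^{\beta}(e+t)$, so that $\phi$ is nonnegative, nondecreasing and locally absolutely continuous with $\phi(0)=0$ and $\phi'(t)\le(1+\beta)\log^{\beta}(e+t)$ for all $t>0$. By the layer-cake formula and Tonelli's theorem,
\begin{eqnarray*}
\int_{Q_1}\phi(|g(x)|)\,dx=\int_0^{\infty}\phi'(\lambda)\,\big|\{x\in Q_1:\,|g(x)|>\lambda\}\big|\,d\lambda.
\end{eqnarray*}
First I would split the $\lambda$-integral at $\lambda=2$. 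The part over $(0,2]$ is at most $\phi(2)\,|Q_1|\lesssim|Q_1|$, which yields the first term on the right-hand side. For the part over $(2,\infty)$ I would substitute $\lambda=2\mu$, bound $\phi'(2\mu)\lesssim_{\beta}\log^{\beta}(e+\mu)$, and estimate $|\{x\in Q_1:\,|g(x)|>2\mu\}|$ by the global quantity $|\{x\in\mathbb{R}^n:\,|g(x)|>2\mu\}|$.

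To control this distribution function, set $h:=f\chi_{Q_2}$ and, for each fixed $\mu>0$, decompose $h=h\chi_{\{|h|>\mu\}}+h\chi_{\{|h|\le\mu\}}$; by sublinearity of $S$,
\begin{eqnarray*}
\big|\{|g|>2\mu\}\big|\le\big|\{|S(h\chi_{\{|h|>\mu\}})|>\mu\}\big|+\big|\{|S(h\chi_{\{|h|\le\mu\}})|>\mu\}\big|.
\end{eqnarray*}
To the first summand I would apply the weak-type hypothesis, which bounds it by $A_2\mu^{-1}\int_{\{|h|>\mu\}}|h|\log^{\varrho}(e+|h|/\mu)\,dx$; to the second I would apply Chebyshev's inequality and then the $L^{p_0}(\mathbb{R}^n)$ bound for $S$, which bounds it by $A_1^{p_0}\mu^{-p_0}\int_{\{|h|\le\mu\}}|h|^{p_0}\,dx$. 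Inserting these into the $\mu$-integral and invoking Tonelli's theorem once more, the matter is reduced to the two elementary one-variable estimates
\begin{eqnarray*}
\int_1^{a}\frac{\log^{\beta}(e+\mu)}{\mu}\log^{\varrho}\Big(e+\frac{a}{\mu}\Big)\,d\mu\lesssim_{\beta,\varrho}\log^{\beta+\varrho+1}(e+a)\qquad(a\ge1)
\end{eqnarray*}
and
\begin{eqnarray*}
\int_{\max(1,a)}^{\infty}\frac{\log^{\beta}(e+\mu)}{\mu^{p_0}}\,d\mu\lesssim_{p_0,\beta}(1+a)^{1-p_0}\log^{\beta}(e+a)\qquad(a>0),
\end{eqnarray*}
where $a$ stands for $|h(x)|$; after multiplying through by $|h(x)|$, respectively $|h(x)|^{p_0}$, integrating in $x$, recalling $h=f\chi_{Q_2}$, and using the pointwise bounds $|t|^{p_0}(1+|t|)^{1-p_0}\le|t|$ and $\log^{\beta}(e+t)\le\log^{\beta+\varrho+1}(e+t)$, one arrives at $(A_1^{p_0}+A_2)\int_{Q_2}|f|\log^{\beta+\varrho+1}(e+|f|)\,dx$.

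The two displayed inequalities are the only genuine computations. The first holds because both logarithmic factors are at most $\log(e+a)$ on $1\le\mu\le a$, leaving $\int_1^a\mu^{-1}\,d\mu=\log a\le\log(e+a)$. The second holds by a dyadic decomposition $\int_a^{\infty}=\sum_{k\ge0}\int_{2^k a}^{2^{k+1}a}$, together with $\log(e+2^{k+1}a)\lesssim(k+1)\log(e+a)$ and the summability of $\sum_{k\ge0}(k+1)^{\beta}2^{k(1-p_0)}$ --- and it is precisely here that the strict inequality $p_0>1$ is used. I expect the only point needing real care to be the bookkeeping of the exponents of the logarithm, so that the final power is exactly $\beta+\varrho+1$ and not larger; one may of course assume at the outset that the right-hand side of the asserted inequality is finite, since otherwise there is nothing to prove.
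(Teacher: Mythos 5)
Your argument is correct, and it supplies in full the proof that the paper omits (the paper only points to the proof of Lemma 3.1 in \cite{huyang} for $\beta=0$ and asserts the general case is similar). Your route --- layer-cake for $\phi(t)=t\log^{\beta}({\rm e}+t)$, splitting $f\chi_{Q_2}$ at each height $\mu$ into its large and small parts, applying the weak $L(\log L)^{\varrho}$ bound to the former and Chebyshev plus the $L^{p_0}$ bound to the latter, and then Tonelli together with the two elementary one-variable integrals --- is exactly the standard interpolation-with-change-of-measure argument that the cited reference uses, and both auxiliary integral estimates (including the use of $p_0>1$ for convergence of the dyadic sum) check out.
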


For the case of $\beta=0$,  Lemma \ref{lem2.1} was essentially proved in the proof of Lemma 3.1 in \cite{huyang}. Fr $\beta\in (0,\,\infty)$, the proof is similar and will be omitted for brevity.

\begin{lemma}\label{lem2.2}
Let $s\in [0,\,\infty)$, $T$ be a sublinear operator which satisfies that for any $\lambda>0$,
$$\big|\{x\in \mathbb{R}^n:\, |Tf(x)|>\lambda\}\big|\lesssim \int_{\mathbb{R}^n}\frac{|f(x)|}{\lambda}\log^s\Big({\rm e}+\frac{|f(x)|}{\lambda}\Big)dx.
$$
Then for any $\varrho\in (0,\,1)$ and cube $Q\subset \mathbb{R}^n$,$$
\Big(\frac{1}{|Q|}\int_{Q}|T(f\chi_{Q})(x)|^{\varrho}dx\Big)^{\frac{1}{\varrho}}\lesssim \|f\|_{L(\log L)^{s},\,Q}.$$
\end{lemma}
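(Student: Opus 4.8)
The final statement is a Kolmogorov-type inequality on the $L(\log L)^{s}$ scale, and the plan is to run the classical layer-cake argument. We may assume $\lambda_{0}:=\|f\|_{L(\log L)^{s},\,Q}\in(0,\,\infty)$: if $\lambda_{0}=0$ then $f=0$ a.e. on $Q$ and both sides vanish, while $\lambda_{0}=\infty$ makes the right-hand side infinite; and for $\lambda_{0}$ finite the infimum defining the Luxemburg norm is attained (the map $\lambda\mapsto\frac{1}{|Q|}\int_{Q}\frac{|f|}{\lambda}\log^{s}(\mathrm{e}+\frac{|f|}{\lambda})\,dx$ being continuous and decreasing), so that $\int_{Q}|f(x)|\log^{s}\big(\mathrm{e}+\frac{|f(x)|}{\lambda_{0}}\big)\,dx\le\lambda_{0}|Q|$. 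Writing $g=|T(f\chi_{Q})|$, the layer-cake formula gives
\[\int_{Q}g(x)^{\varrho}\,dx=\varrho\int_{0}^{\infty}\lambda^{\varrho-1}\,\big|\{x\in Q:\,g(x)>\lambda\}\big|\,d\lambda,\]
and for the distribution function I would use the two bounds $\big|\{x\in Q:\,g(x)>\lambda\}\big|\le|Q|$ and $\big|\{x\in Q:\,g(x)>\lambda\}\big|\lesssim\frac1\lambda\int_{Q}|f(x)|\log^{s}\big(\mathrm{e}+\frac{|f(x)|}{\lambda}\big)\,dx$, the latter coming from the hypothesis on $T$ together with $\mathrm{supp}\,(f\chi_{Q})\subset Q$.

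Next I would split the $\lambda$-integral at $\lambda_{0}$. On $(0,\,\lambda_{0}]$ use the trivial bound $|Q|$, contributing $\varrho|Q|\int_{0}^{\lambda_{0}}\lambda^{\varrho-1}\,d\lambda=\lambda_{0}^{\varrho}|Q|$. On $(\lambda_{0},\,\infty)$ use the weak-type bound and Fubini, which reduces matters, for each fixed $x\in Q$, to the one-variable integral $\int_{\lambda_{0}}^{\infty}\lambda^{\varrho-2}\log^{s}\big(\mathrm{e}+\frac{|f(x)|}{\lambda}\big)\,d\lambda$. Here the key elementary point is that $\lambda\ge\lambda_{0}$ together with the monotonicity of $r\mapsto\log^{s}(\mathrm{e}+r)$ gives $\log^{s}\big(\mathrm{e}+\frac{|f(x)|}{\lambda}\big)\le\log^{s}\big(\mathrm{e}+\frac{|f(x)|}{\lambda_{0}}\big)$, while $\varrho-2<-1$ makes $\int_{\lambda_{0}}^{\infty}\lambda^{\varrho-2}\,d\lambda=\lambda_{0}^{\varrho-1}/(1-\varrho)$ finite; hence this piece is $\lesssim\frac{\varrho}{1-\varrho}\,\lambda_{0}^{\varrho-1}\int_{Q}|f(x)|\log^{s}\big(\mathrm{e}+\frac{|f(x)|}{\lambda_{0}}\big)\,dx$.

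Finally I would invoke the bound $\int_{Q}|f(x)|\log^{s}\big(\mathrm{e}+\frac{|f(x)|}{\lambda_{0}}\big)\,dx\le\lambda_{0}|Q|$ from the first step, which controls the $(\lambda_{0},\,\infty)$ piece by $\frac{C\varrho}{1-\varrho}\lambda_{0}^{\varrho}|Q|$. Adding the two pieces yields $\int_{Q}g^{\varrho}\,dx\lesssim_{\varrho,\,s}\lambda_{0}^{\varrho}|Q|$, and dividing by $|Q|$ and taking $\varrho$-th roots gives the claim. I do not expect a genuine obstacle here; the only points requiring a little care are the convergence at infinity of the inner $\lambda$-integral (guaranteed precisely by $\varrho<1$) and the bookkeeping that the implicit constant depends only on $\varrho$ and $s$, the $(1-\varrho)^{-1}$ blow-up as $\varrho\uparrow 1$ being harmless since $\varrho$ is a fixed parameter.
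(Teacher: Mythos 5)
Your proof is correct, and it is the standard Kolmogorov-type layer-cake argument (split the distribution-function integral at $\lambda_{0}=\|f\|_{L(\log L)^{s},\,Q}$, use the trivial bound below $\lambda_0$ and the weak-type hypothesis plus Tonelli above it, then invoke $\int_{Q}|f|\log^{s}(\mathrm{e}+|f|/\lambda_{0})\,dx\le\lambda_{0}|Q|$). The paper gives no proof of its own and simply cites Hu--Li \cite[p.\,643]{huli}, where essentially this same argument appears, so there is nothing further to compare.
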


For the proof of Lemma \ref{lem2.2}, see \cite[p. 643]{huli}.

\begin{lemma}\label{lem2.3}
Let $R>1$, $\Omega\subset \mathbb{R}^n$ be a open set. Then  $\Omega$ can be decomposed as
$\Omega=\cup_{j}Q_j$, where $\{Q_j\}$ is a sequence of cubes with disjoint interiors, and
\begin{itemize}
\item[\rm (i)]
$$5R\le \frac{{\rm dist}(Q_j,\,\mathbb{R}^n\backslash \Omega)}{{\rm diam} Q_j}\le 15R,$$
\item[\rm (ii)] $\sum_{j}\chi_{RQ_j}(x)\lesssim_{n,R} \chi_{\Omega}(x).$
\end{itemize}
\end{lemma}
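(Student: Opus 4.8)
The plan is to realize $\{Q_j\}$ as a parameter-adjusted Whitney decomposition, built directly from maximal dyadic cubes lying ``deep inside'' $\Omega$. (A uniform dyadic subdivision of the classical Whitney cubes will not do the job: it spreads the ratio ${\rm dist}(Q,\mathbb{R}^n\setminus\Omega)/{\rm diam}\,Q$ over a multiplicative factor of roughly $5$, too large to be forced inside the window $[5R,15R]$.) Throughout I would assume $\Omega\subsetneq\mathbb{R}^n$, so that $F:=\mathbb{R}^n\setminus\Omega$ is a nonempty closed set; otherwise the statement is vacuous.

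Call a dyadic cube $Q$ \emph{admissible} if ${\rm dist}(Q,F)\ge 5R\,{\rm diam}\,Q$. Two elementary observations drive the construction. First, admissibility passes to dyadic children: if $Q$ is admissible with child $Q'$, then ${\rm dist}(Q',F)\ge{\rm dist}(Q,F)\ge 5R\,{\rm diam}\,Q=10R\,{\rm diam}\,Q'\ge 5R\,{\rm diam}\,Q'$. Second, for each $x\in\Omega$ the dyadic cubes $Q\ni x$ of sufficiently small side length are admissible, since ${\rm dist}(Q,F)\to{\rm dist}(x,F)>0$ as they shrink, whereas any admissible $Q\ni x$ satisfies $5R\,{\rm diam}\,Q\le{\rm dist}(Q,F)\le{\rm dist}(x,F)$ and hence has bounded side length. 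Because the dyadic cubes through a fixed point are totally ordered by inclusion, there is a unique \emph{maximal} admissible dyadic cube $Q_x\ni x$. I would then let $\{Q_j\}$ be the collection of distinct cubes occurring as some $Q_x$; standard dyadic reasoning gives pairwise disjoint interiors and $\Omega=\bigcup_j Q_j$.

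Property (i) in the lower direction is just admissibility of $Q_j$. For the upper bound, maximality forces the dyadic parent $\widehat Q_j$ to fail admissibility, so ${\rm dist}(\widehat Q_j,F)<5R\,{\rm diam}\,\widehat Q_j=10R\,{\rm diam}\,Q_j$, whence
\[
{\rm dist}(Q_j,F)\le{\rm dist}(\widehat Q_j,F)+{\rm diam}\,\widehat Q_j<(10R+2)\,{\rm diam}\,Q_j\le 15R\,{\rm diam}\,Q_j,
\]
the last step using $R>1$. For property (ii), fix $x$ and suppose $x\in RQ_j$. Since $Q_j\subseteq RQ_j\ni x$ and the center of $Q_j$ lies in $Q_j$, the triangle inequality together with the two-sided bound from (i) shows that ${\rm dist}(x,F)$ is comparable to $\ell(Q_j)$, with constants depending only on $n$ and $R$; in particular $x\in RQ_j$ forces ${\rm dist}(x,F)>0$, i.e. $x\in\Omega$, so no $RQ_j$ meets $\mathbb{R}^n\setminus\Omega$. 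When $x\in\Omega$, each such $Q_j$ then has $\ell(Q_j)\gtrsim_{n,R}{\rm dist}(x,F)$ and $Q_j\subseteq RQ_j\subseteq B\big(x,\sqrt n\,R\,\ell(Q_j)\big)\subseteq B\big(x,C(n,R)\,{\rm dist}(x,F)\big)$; as the $Q_j$ are pairwise disjoint, a volume count bounds their number by a constant depending only on $n$ and $R$, which yields $\sum_j\chi_{RQ_j}(x)\lesssim_{n,R}\chi_\Omega(x)$.

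The one delicate point is the tightness of the window in (i): the maximal-admissible-cube construction is essentially forced, because passing from $Q_j$ to its dyadic parent changes the ratio by only a factor close to $2$, and $2\cdot 5R<15R$ precisely when $R>1$ (the additive $+2$ being harmless since $R>1$). Everything else is routine dyadic bookkeeping, and the implicit constants can be tracked explicitly if one wishes.
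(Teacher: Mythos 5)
Your construction is correct and complete: taking maximal dyadic cubes satisfying ${\rm dist}(Q,F)\ge 5R\,{\rm diam}\,Q$ is precisely the parameter-adjusted Whitney decomposition, and your verifications of the window $[5R,15R]$ (via non-admissibility of the parent and $10R+2\le 15R$ for $R>1$) and of the bounded overlap of the $RQ_j$ (via the comparability of ${\rm dist}(x,F)$ with $\ell(Q_j)$ and a volume count) are all sound. The paper itself gives no proof, deferring to Sawyer's article, and your argument is essentially the standard one found there.
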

For the proof of Lemma \ref{lem2.3}, see \cite[p.\,256]{saw}.

\begin{lemma}\label{lem2.4} Let $\beta\in [0,\,\infty)$,  $U$ be a sublinear operator which is bounded on $L^2(\mathbb{R}^n)$, and satisfies that for any $\lambda>0$,
$$\big|\{x\in\mathbb{R}^n:\, |Uf(x)>t\}\big|\lesssim \int_{\mathbb{R}^n}\frac{|f(x)|}{t}\log ^{\beta}\Big({\rm e}+\frac{|f(x)|}{t}\Big)dx.$$
Let $T$ be an $L^2(\mathbb{R}^n)$ bounded operator with nonsmooth kernel which satisfies  Assumption \ref{a1.2}. Then for any
$\lambda>0$,
\begin{eqnarray}\label{equ:2.3}&&\big|\{x\in\mathbb{R}^n:\, |UTf(x)|>\lambda\}\big|\lesssim
\int_{\mathbb{R}^n}\frac{|f(x)|}{\lambda}\log^{\beta+1} \Big ({\rm e}+\frac{|f(x)|}{\lambda}\Big)dx.\end{eqnarray}
 \end{lemma}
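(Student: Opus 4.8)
The plan is to run a Calderón–Zygmund decomposition at height $\lambda$ and to dispatch each resulting piece either by the $L^2$-boundedness of $U$ and $T$, or — for the one genuinely nonsmooth piece — by combining the weak $L(\log L)^\beta$ hypothesis on $U$ with the local self-improvement in Lemma \ref{lem2.1}; the extra logarithm in \eqref{equ:2.3} will come exactly from the ``$+1$'' in the exponent of Lemma \ref{lem2.1}. By density and Fatou I may assume $f$ is bounded with compact support, so that every function below lies in $L^2(\mathbb{R}^n)$. Fix $\lambda>0$, put $\Omega=\{Mf>c\lambda\}$ for a suitable dimensional $c$, and decompose $\Omega=\bigcup_jQ_j$ by Lemma \ref{lem2.3} with parameter $R$ to be chosen large, so that $\sum_j\chi_{RQ_j}\lesssim\chi_\Omega$ and (i) holds. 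With $c_j=|Q_j|^{-1}\int_{Q_j}f$ set $b_j=(f-c_j)\chi_{Q_j}$, $b=\sum_jb_j$, $g=f-b$; then $\|b_j\|_{L^1}\lesssim\lambda|Q_j|$, $\sum_j|Q_j|\lesssim\lambda^{-1}\|f\|_{L^1}$, while property (i) forces $|g|\lesssim\lambda$ a.e. and hence $\|g\|_{L^2}^2\lesssim\lambda\|f\|_{L^1}$. Since $T$ is linear and $U$ sublinear,
\[\big|\{|UTf|>\lambda\}\big|\le\big|\{|UTg|>\lambda/2\}\big|+\big|\{|UTb|>\lambda/2\}\big|,\]
and the first term is $\lesssim\lambda^{-2}\|UTg\|_{L^2}^2\lesssim\lambda^{-2}\|g\|_{L^2}^2\lesssim\lambda^{-1}\|f\|_{L^1}\le\int\frac{|f|}{\lambda}\log^{\beta+1}(e+\frac{|f|}{\lambda})$.

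For the bad part I would invoke Assumption \ref{a1.2}: with $t_j=\ell(Q_j)^s$ and $\{A_t\}$ the approximation to the identity there, write $Tb_j=(T-TA_{t_j})b_j+TA_{t_j}b_j$, so that $Tb=V+TG$ with $V=\sum_j(T-TA_{t_j})b_j$ and $G=\sum_jA_{t_j}b_j$ (the identity $\sum_jTA_{t_j}b_j=TG$ being justified by $L^2$-convergence). Using $\|A_{t_j}b_j\|_{L^\infty}\lesssim t_j^{-n/s}\|b_j\|_{L^1}\lesssim\lambda$ together with the fast decay \eqref{equ:1.3} of $h$, a routine summation over $j$ gives $\|G\|_{L^\infty}\lesssim\lambda$, while $\|G\|_{L^1}\le\sum_j\|A_{t_j}b_j\|_{L^1}\lesssim\|f\|_{L^1}$; hence $\|G\|_{L^2}^2\le\|G\|_{L^\infty}\|G\|_{L^1}\lesssim\lambda\|f\|_{L^1}$ and $|\{|UTG|>\lambda/4\}|\lesssim\lambda^{-2}\|TG\|_{L^2}^2\lesssim\lambda^{-1}\|f\|_{L^1}$, again by the $L^2$-boundedness of $U$ and $T$.

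It remains to estimate $V$. Choose $R$ so large that $\widetilde{Q}_j:=DQ_j\subset RQ_j$, where $D$ is a dimensional multiple of $c_1$ for which $x\notin\widetilde{Q}_j$, $y\in Q_j$ forces $|x-y|\ge c_1t_j^{1/s}$, and split $V=V_{\mathrm{in}}+V_{\mathrm{out}}$ according to $\chi_{\widetilde{Q}_j}$ and $\chi_{(\widetilde{Q}_j)^c}$. For $x\notin\widetilde{Q}_j$, the representation \eqref{equ:1.1} and the kernel estimate of Assumption \ref{a1.2} give $|(T-TA_{t_j})b_j(x)|=\big|\int_{Q_j}(K(x,y)-K_{t_j}(x,y))b_j(y)\,dy\big|\lesssim\lambda\big(\ell(Q_j)/(\ell(Q_j)+\mathrm{dist}(x,Q_j))\big)^{n+\alpha}$; hence $|V_{\mathrm{out}}|\lesssim\lambda I$ with $I=\sum_j\big(\ell(Q_j)/(\ell(Q_j)+\mathrm{dist}(\cdot,Q_j))\big)^{n+\alpha}$, and a duality argument (testing $I$ against $\phi\ge0$ and using that every dilate $2^kQ_j$ satisfies $\langle\phi\rangle_{2^kQ_j}\le\inf_{Q_j}M\phi$) yields $\|I\|_{L^2}^2\lesssim\sum_j|Q_j|\lesssim\lambda^{-1}\|f\|_{L^1}$, so $\|V_{\mathrm{out}}\|_{L^2}^2\lesssim\lambda\|f\|_{L^1}$ and $|\{|UV_{\mathrm{out}}|>\lambda/8\}|\lesssim\lambda^{-1}\|f\|_{L^1}$. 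For $V_{\mathrm{in}}$ — the only place the weak $L(\log L)^\beta$ bound for $U$ is used — I would write $|\{|UV_{\mathrm{in}}|>\lambda/8\}|\lesssim\int_{\mathbb{R}^n}\frac{|V_{\mathrm{in}}|}{\lambda}\log^\beta\big(e+\frac{|V_{\mathrm{in}}|}{\lambda}\big)$, pull the sum out of the integral using the bounded overlap $\sum_j\chi_{\widetilde{Q}_j}\lesssim\chi_\Omega$, and apply Lemma \ref{lem2.1} to $S_j:=T-TA_{t_j}$ (which are sublinear, $L^2$-bounded and of weak type $(1,1)$ with constants uniform in $j$, since $T$ is of weak type $(1,1)$ by Duong–McIntosh — Assumption \ref{a1.2} implies Assumption \ref{a1.0} — and $\|A_{t_j}\|_{L^1\to L^1}\lesssim1$) with $p_0=2$, $\varrho=0$, input $b_j/\lambda$, $Q_1=\widetilde{Q}_j$, $Q_2=Q_j$:
\[\int_{\widetilde{Q}_j}|S_jb_j|\log^\beta\Big(e+\frac{|S_jb_j|}{\lambda}\Big)\lesssim\lambda|Q_j|+\int_{Q_j}|b_j|\log^{\beta+1}\Big(e+\frac{|b_j|}{\lambda}\Big).\]
Summing over $j$, using $|b_j|\le|f|+C\lambda$ on $Q_j$, the disjointness of the $Q_j$, $\sum_j|Q_j|\lesssim\lambda^{-1}\|f\|_{L^1}$ and the sublinear growth of $\log^{\beta+1}(e+\cdot)$, one gets $\int\frac{|V_{\mathrm{in}}|}{\lambda}\log^\beta(e+\frac{|V_{\mathrm{in}}|}{\lambda})\lesssim\int\frac{|f|}{\lambda}\log^{\beta+1}(e+\frac{|f|}{\lambda})$. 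Adding the four contributions proves \eqref{equ:2.3}.

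The main obstacle is the term $V_{\mathrm{in}}$. Away from the dilated cubes there is genuine pointwise decay coming from the smoothing in Assumption \ref{a1.2}, so $V_{\mathrm{out}}$ and $TG$ are handled purely by $L^2$ estimates; but on the cubes $\widetilde{Q}_j$ there is no pointwise kernel control, and the functions $b_j$ are controlled only in $L^1$, not in $L^2$, so no $L^2$ argument is available. One is therefore forced to feed the local $L(\log L)^\beta$ self-improvement of Lemma \ref{lem2.1} into the weak $L(\log L)^\beta$ hypothesis on $U$; this is precisely the step that manufactures the one extra logarithm, and it is also why one must use the bounded-overlap Calderón–Zygmund decomposition of Lemma \ref{lem2.3} rather than the plain dyadic one.
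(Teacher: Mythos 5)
Your overall strategy is the same as the paper's: a Whitney-type Calder\'on--Zygmund decomposition via Lemma \ref{lem2.3}; the good part and the smoothed bad part $G=\sum_jA_{t_j}b_j$ handled by $L^2$ bounds; the off-diagonal part of $\sum_j(T-TA_{t_j})b_j$ handled by an $L^2$ duality estimate coming from the kernel condition; and the on-diagonal part fed into the weak $L(\log L)^{\beta}$ hypothesis on $U$ together with Lemma \ref{lem2.1}, which is where the extra logarithm appears. The only structural difference is that the paper keeps $\sum_l\chi_{3c_1Q_l}Tb_l$ and $\sum_l\chi_{3c_1Q_l}TA_{t_{Q_l}}b_l$ as two separate terms (its ${\rm U}_2$ and ${\rm U}_4$, the latter done in $L^2$), whereas you merge them into $V_{\mathrm{in}}$ by applying Lemma \ref{lem2.1} directly to $S_j=T-TA_{t_j}$; this is legitimate, since $S_j$ is $L^2$-bounded and of weak type $(1,1)$ uniformly in $j$ (Assumption \ref{a1.2} does imply Assumption \ref{a1.0}, and $A_t$ is uniformly bounded on $L^1$). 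Your use of the pointwise bound of Assumption \ref{a1.2} for $V_{\mathrm{out}}$, where the paper uses only the integrated H\"ormander-type condition, is an equivalent variant.

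The one step that does not survive scrutiny is the claim $\|G\|_{L^\infty}\lesssim\lambda$. Each term satisfies $|A_{t_j}b_j(x)|\lesssim\lambda\,h\big({\rm dist}(x,Q_j)/\ell(Q_j)\big)\lesssim\lambda\big(\ell(Q_j)/(\ell(Q_j)+{\rm dist}(x,Q_j))\big)^{n+\eta}$, but the sum over $j$ of these quantities is not uniformly bounded: near a boundary point of $\Omega=\{Mf>c\lambda\}$ there is, at every dyadic scale, a Whitney cube whose distance to that point is comparable (with ratio about $R$) to its own side length, and each such cube contributes about $\lambda h(R)$, so the sum grows like $\log\big(1/{\rm dist}(x,\partial\Omega)\big)$; the decay (\ref{equ:1.3}) controls only the far cubes, not this near-diagonal tower of scales, and the kernels $a_t$ have no smoothness in $y$ that would let you exploit the mean-zero property of $b_j$. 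The intermediate conclusion you actually need, $\|G\|_{L^2}^2\lesssim\lambda\|f\|_{L^1}$, is nonetheless true; it is precisely the estimate the paper quotes from Duong--McIntosh, proved by duality: $\big|\int A_{t_j}b_j\,u\big|\lesssim\|b_j\|_{L^1}\inf_{Q_j}Mu$, hence $\big|\int Gu\big|\lesssim\lambda\int_{\cup_jQ_j}Mu(y)dy\lesssim\lambda|\Omega|^{1/2}\|u\|_{L^2}$. With that substitution the rest of your argument is correct and coincides in substance with the paper's proof.
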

\begin{proof}
By homogeneity, it suffices to prove  inequality (\ref{equ:2.3}) for the case of $\lambda=1$. Without loss of generality, we may assume that $c_1>1$.
Applying Lemma \ref{lem2.3} with $R=3c_1$ to the set $\{x\in\mathbb{R}^n:\,Mf(x)>1\}$, we   obtain a sequence of cubes $\{Q_l\}$ with disjoint interiors, such that
$$\{x\in\mathbb{R}^n:\,Mf(x)>1\}=\cup_lQ_l,$$
and  $$\int_{Q_l}|f(y)|dy\lesssim |Q_l|,\,\,\,\sum_{l}\chi_{3c_1Q_l}\lesssim 1.$$
Let $$g(x)=f(x)\chi_{\mathbb{R}^n\backslash \cup_{l}Q_l}(x),$$
$$b(x)=\sum_{l}f(x)\chi_{Q_l}(x):=\sum_lb_l(x).$$
Recall  that $UT$ is bounded on $L^2(\mathbb{R}^n)$. Thus by the fact that $\|g\|_{L^{\infty}(\mathbb{R}^n)}\lesssim 1$, we get  that
\begin{eqnarray*}\big|\{x\in\mathbb{R}^n:\,|UTg(x)|>\lambda\big\}\big|\lesssim\int_{\mathbb{R}^n}|g(x)|^{2}dx\lesssim \int_{\mathbb{R}^n}|f(x)|dx.\end{eqnarray*}

To estimate $UTb$,   let $t_{Q_l}=\ell(Q_l)^s$ with $s$ the constant appeared in (\ref{equ:1.2}). Write
\begin{eqnarray*}|UTb(x)|&\le & \Big|UT\Big(\sum_{l}A_{t_{Q_l}}b_l\Big)(x)\Big|+\Big|U\Big(\sum_{l}\chi_{3c_1Q_l}Tb_l\Big)(x)\Big|\\
&&+\Big|U\Big(\sum_{l}\chi_{\mathbb{R}^n\backslash 3c_1Q_l}T\big(b_l-A_{t_{Q_l}}b_l\big)\Big)(x)\Big|\\
&&+\Big|U\Big(\sum_{l}\chi_{3c_1Q_l}TA_{t_{Q_l}}b_l\Big)(x)\Big|\nonumber\\
&=&{\rm U}_1(x)+{\rm U}_2(x)+{\rm U}_3(x)+{\rm U}_4(x).\nonumber
\end{eqnarray*}
We first consider the term ${\rm U}_1$.  It was proved in \cite[p. 241]{duongmc} that
\begin{eqnarray}\Big\|\sum_{l}A_{t_{Q_l}}b_l\Big\|_{L^2(\mathbb{R}^n)}^2\lesssim \|f\|_{L^1(\mathbb{R}^n)}.
\end{eqnarray}This, in turn, gives us that
\begin{eqnarray}\label{equ:2.6}\big|\{x\in\mathbb{R}^n: |{\rm U}_1(x)|>1/8\}\big|\lesssim \big\|UT\big(\sum_{l}A_{t_{Q_l}}b_l\big)\big\|_{L^{2}(\mathbb{R}^n)}^{2}\lesssim\|f\|_{L^1(\mathbb{R}^n)}.\end{eqnarray}
Recall that $\chi_{\cup_{l}3c_1Q_l}\lesssim1$. It follows from    Lemma \ref{lem2.1}  that
\begin{eqnarray}&&\big|\{x\in\mathbb{R}^n: |{\rm U}_2(x)|>1/8\}\big|\\
&&\quad\lesssim\sum_l\int_{3c_1Q_l}|Tb_l(x)|\log^{\beta} \Big({\rm e}+\sum_{j}\chi_{3c_1Q_j}|Tb_j(x)| \Big)dx\nonumber\\
&&\quad\lesssim\sum_l\int_{3c_1Q_l}|Tb_l(x)|\log^{\beta} ({\rm e}+|Tb_l(x)| )dx\nonumber\\
&&\quad\lesssim\sum_{l}\Big(|Q_l|+\int_{Q_l}|b_l(y)|\log^{\beta+1} ({\rm e}+|b_l(y)|)dy\Big)\nonumber\\
&&\quad\lesssim\int_{\mathbb{R}^n}|f(y)|\log^{\beta+1} ({\rm e}+|f(y)|)dy.\nonumber\end{eqnarray}

To estimate term ${\rm U}_3$, we first observe that if $v\in L^{2}(\mathbb{R}^n)$ with  $\|v\|_{L^2(\mathbb{R}^n)}=1$, then
\begin{eqnarray*}
&&\sum_{l}\Big|\int_{\mathbb{R}^n\backslash 3c_1Q_l}T\big(b_l-A_{t_{Q_l}}b_l)(y)v(y)dy\Big|\\
&&\quad\lesssim\sum_{l}\int_{\mathbb{R}^n}|b_l(z)|\int_{\mathbb{R}^n\backslash 3c_1Q_l}|K(y,\,z)-K_{t_{Q_l}}(y,\,z)||v(y)|dydz\\
&&\quad\lesssim \sum_{l}\int_{Q_l}Mv(y)dy\lesssim\Big(\sum_l|Q_l|\Big)^{\frac{1}{2}},
\end{eqnarray*}
since
$$\int_{\mathbb{R}^n\backslash 3c_1Q_l}|K(y,\,z)-K_{t_{Q_l}}(y,\,z)||v(y)|dy\lesssim \inf_{z\in Q_l}Mv(z).
$$
Thus, by a standard duality argument,
\begin{eqnarray}\label{equ:2.8}\big|\{x\in\mathbb{R}^n:|{\rm U}_3(x)|>\frac{1}{8}\}\big|&\lesssim&\Big\|\sum_{l}\chi_{\mathbb{R}^n\backslash 3c_1Q_l}T\big(b_l-A_{t_{Q_l}}b_l)\Big\|_{L^2(\mathbb{R}^n)}^2\nonumber\\
&\lesssim&\int_{\mathbb{R}^n}|f(y)|dy.\nonumber\end{eqnarray}

It remains to estimate ${\rm U}_4$. It was proved in \cite[p. 241]{duongmc} that for each fixed $l$,
\begin{eqnarray}\label{equ:2.9}\Big|\int_{\mathbb{R}^n}A_{t_{Q_l}}b_l(x)u(x)dx\Big|\lesssim \int_{Q_l}Mu(x)dx.
\end{eqnarray}Let $v\in L^{2}(\mathbb{R}^n)$ with $\|v\|_{L^{2}(\mathbb{R}^n)}=1$. A straightforward computation involving the inequality (\ref{equ:2.9}) and H\"older's inequality leads to that
\begin{eqnarray*}
\Big|\int_{\mathbb{R}^n}TA_{t_{Q_l}}b_l(x)v(x)dx\Big|
&\lesssim& \int_{Q_l}M\widetilde{T}v(y)dy\\
&\lesssim&|Q_l|^{\frac{1}{2}}\Big(\int_{\mathbb{R}^n}\big[M\widetilde{T}v(x)\big]^{2}dx\Big)^{\frac{1}{2}}
\lesssim|Q_l|^{\frac{1}{2}},
\end{eqnarray*}
here, $\widetilde{T}$ is the adjoint operator of $T$. This, in turn, implies that for each $l$
\begin{eqnarray}\label{equ:2.local}\big\|T_2A_{t_{Q_l}}h_l\big\|_{L^{2}(\mathbb{R}^n)}^{2}\lesssim |Q_l|.\end{eqnarray}
It now follows from the inequality (\ref{equ:2.2}) that
\begin{eqnarray}\label{equ:2.10}\big|\{x\in\mathbb{R}^n: |{\rm U}_4(x)|>1/8\}\big|&\lesssim&
\Big\|\sum_{l}\chi_{3c_1Q_l}TA_{t_{Q_l}}b_l\Big\|_{L^{2}(\mathbb{R}^n)}^{2}\\
&\lesssim&\sum_{l}\int_{3c_1Q_l}|TA_{t_l}b_l(y)|^{2}dy\nonumber\\
&\lesssim& \int_{\mathbb{R}^n}|f(y)|dy.\nonumber\end{eqnarray}
Combining the estimates (\ref{equ:2.6})-(\ref{equ:2.8}) and the estimate (\ref{equ:2.10}) leads to our desired conclusion.
\end{proof}
We are now ready to establish our main result in this section.
\begin{theorem}\label{thm2.1}
Let $T_1$, $T_2$,\,\dots,\,$T_k$ be  $L^2(\mathbb{R}^n)$ bounded operators with nonsmooth kernels. Suppose that $T_1$,\,\dots,\,$T_k$ satisfy Assumption \ref{a1.2}. Then for any
$\lambda>0$,
\begin{eqnarray}\label{equ:2.11}&&\big|\{x\in\mathbb{R}^n:\, |T_1\dots T_kf(x)|>\lambda\}\big|\lesssim \int_{\mathbb{R}^n}\frac{|f(x)|}{\lambda}\log^{k-1} \Big ({\rm e}+\frac{|f(x)|}{\lambda}\Big)dx,\end{eqnarray}and
\begin{eqnarray}\label{equ:2.12}&&\big|\{x\in\mathbb{R}^n:M_{L(\log L)^{\beta}}T_1\dots T_kf(x)>\lambda\}\big|\lesssim \int_{\mathbb{R}^n}\frac{|f(x)|}{\lambda}\log^{\beta+k} \Big ({\rm e}+\frac{|f(x)|}{\lambda}\Big)dx.\end{eqnarray}
\end{theorem}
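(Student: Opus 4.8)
The plan is to prove both displayed estimates by induction on $k$, using Lemma~\ref{lem2.4} as the inductive engine. For \eqref{equ:2.11}, the base case $k=1$ is precisely the weak $(1,1)$ estimate for a single singular integral operator with nonsmooth kernel satisfying Assumption~\ref{a1.2} (which follows, e.g., from Lemma~\ref{lem2.4} applied with $U$ the identity operator, whose weak $(1,1)$ bound is trivial with $\beta=0$, or directly from the Duong--McIntosh theorem). For the inductive step, suppose \eqref{equ:2.11} holds for $k-1$, i.e.\ the operator $U:=T_2\cdots T_k$ satisfies the weak-type bound with logarithmic exponent $(k-1)-1=k-2$; since each $T_i$ is $L^2$-bounded, $U$ is $L^2$-bounded as well. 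Now apply Lemma~\ref{lem2.4} with this $U$ (so $\beta=k-2$) and $T=T_1$: the conclusion is exactly that $UT_1=T_2\cdots T_kT_1$ satisfies the weak-type bound with exponent $(k-2)+1=k-1$. A small bookkeeping point: Lemma~\ref{lem2.4} produces $UT$ rather than $TU$, so to land on $T_1\cdots T_k$ one should instead run the induction ``from the left'': take $U=T_1\cdots T_{k-1}$ (weak-type exponent $k-2$, $L^2$-bounded by the inductive hypothesis together with $L^2$-boundedness of each factor) and $T=T_k$, and invoke Lemma~\ref{lem2.4} to get that $UT_k=T_1\cdots T_k$ obeys \eqref{equ:2.11}.

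For \eqref{equ:2.12}, I would combine \eqref{equ:2.11} with the endpoint behavior of the maximal operator $M_{L(\log L)^\beta}$. Having established that $V:=T_1\cdots T_k$ satisfies $|\{|Vf|>\lambda\}|\lesssim \int |f|/\lambda\cdot\log^{k-1}(\mathrm{e}+|f|/\lambda)$, it suffices to show that the composition $M_{L(\log L)^\beta}\circ V$ picks up one extra power of the logarithm, i.e.\ lands at exponent $\beta+k$. The natural route is again Lemma~\ref{lem2.4}: set $U=M_{L(\log L)^\beta}$. This $U$ is sublinear, is bounded on $L^2(\mathbb{R}^n)$ (standard, since $L(\log L)^\beta$-averages are dominated by $L^r$-averages for any $r>1$, so $M_{L(\log L)^\beta}$ is pointwise controlled by an $L^r$-maximal operator which is $L^2$-bounded), and satisfies the weak-type bound \eqref{equ:2.2} with exponent $\beta$. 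However, Lemma~\ref{lem2.4} as stated requires the \emph{second} factor to be the singular integral operator with nonsmooth kernel, and raises the exponent by exactly $1$ regardless of the first factor's exponent — applying it with $U=M_{L(\log L)^\beta}$ and $T=T_k$ would only handle $M_{L(\log L)^\beta}T_k$, not the full composition. So instead I would iterate: write $M_{L(\log L)^\beta}T_1\cdots T_k$ and apply Lemma~\ref{lem2.4} $k$ times, peeling off one $T_j$ at a time from the right, starting from the operator $M_{L(\log L)^\beta}$ with exponent $\beta$, then $M_{L(\log L)^\beta}T_1$ with exponent $\beta+1$ (noting this composite is still $L^2$-bounded since $M_{L(\log L)^\beta}$ and each $T_j$ are), and so on, arriving at exponent $\beta+k$ after incorporating all of $T_1,\dots,T_k$.

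The main obstacle, and the point needing care, is the $L^2$-boundedness hypothesis of Lemma~\ref{lem2.4}: at each stage of the iteration one must know that the current composite operator (e.g.\ $M_{L(\log L)^\beta}T_1\cdots T_j$) is $L^2$-bounded. For the $T_i$ factors this is immediate from the hypothesis; for the $M_{L(\log L)^\beta}$ prefactor one invokes the standard fact that $M_{L(\log L)^\beta}$ is controlled pointwise by $M_r$ for any $r>1$ and hence bounded on $L^2$. One should also double-check the direction conventions in Lemma~\ref{lem2.4} (it composes as $UT$, with $T$ the nonsmooth-kernel operator on the right) and order the peeling accordingly; with the ``from the left'' organization described above this is consistent. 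A secondary technical point is that when $k\ge 2$ the intermediate operators $T_2\cdots T_k$ etc.\ are themselves sublinear (indeed linear) and satisfy the hypotheses of Lemma~\ref{lem2.4} as the operator $U$, so the induction is internally consistent; no new structural assumptions beyond Assumption~\ref{a1.2} for each factor are needed, which matches the statement of the theorem.
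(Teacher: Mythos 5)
Your proposal is correct and follows essentially the same route as the paper, whose (very terse) proof is exactly this induction on $k$ via Lemma~\ref{lem2.4}, taking $U=M_{L(\log L)^{\beta}}T_1\cdots T_{j}$ (with weak-type exponent $\beta+j$ and $L^2$-bounded) and $T=T_{j+1}$ at each stage, with the base case supplied by (\ref{equ:2.2}). The only small caveat is in your base case for (\ref{equ:2.11}): applying Lemma~\ref{lem2.4} with $U$ the identity yields exponent $0+1=1$ rather than the required $k-1=0$, so one should use your alternative, namely the Duong--McIntosh weak $(1,1)$ theorem, which applies here because the pointwise bound in Assumption~\ref{a1.2} implies the integral condition of Assumption~\ref{a1.0}.
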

\begin{proof}We only prove the inequality (\ref{equ:2.12}). For the case of $k=1$, (\ref{equ:2.12}) follows from the inequality (\ref{equ:2.2})
and Lemma \ref{lem2.4}. For general $k\in\mathbb{N}$, (\ref{equ:2.12}) can be deduced by applying Lemma \ref{lem2.4} and the inductive argument.
\end{proof}

\begin{remark}
The inequality (\ref{equ:2.12}) with $\beta=0$ and $k=1$ was proved in \cite{hu1} under the hypothesis that $T$ satisfies
Assumption \ref{a1.1} and Assumption \ref{a1.2}, by a different argument.

\end{remark}

\section{Proof of Theorem \ref{thm1.2}}
Let $T_1$, $T_2$ be two singular integral operators with nonsmooth kernels.
As in Lerner \cite{ler3}, we define the grand maximal operator $\mathcal{M}_{T_1}$  by
$$\mathcal{M}_{T_1}f(x)=\sup_{Q\ni x}{\rm ess}\sup_{\xi\in Q}|T_1(f\chi_{\mathbb{R}^n\backslash 3Q})(\xi)|.$$
It was proved in \cite{hu1} that
\begin{eqnarray}\label{equ:3.-1}\mathcal{M}_{T_1}f(x)\lesssim MT_1f(x)+M_{L\log L}f(x).
\end{eqnarray}
Also, we define the grand maximal operator  $\mathcal{M}^*_{M_{L(
log L)^k}T_1}$ by
$$\mathcal{M}^*_{M_{L(\log L)^k}T_1}f(x)=\sup_{Q\ni x}{\rm ess}\sup_{\xi\in Q}|M_{L(\log L)^k}T_1(f\chi_{\mathbb{R}^n\backslash 9Q})(\xi)|,$$
and  the grand maximal operator $\mathcal{M}_{MT_1T_2}^{**}$ by
$$\mathcal{M}_{MT_1T_2}^{**}f(x)=\sup_{Q\ni x}{\rm ess}\sup_{\xi\in Q}|MT_1\big(\chi_{\mathbb{R}^n\backslash 9Q}
T_2(f\chi_{\mathbb{R}^n\backslash 27Q})\big)(\xi)|.$$
\begin{lemma}\label{lem3.1}
Let $T_1$, $T_2$ be two singular integral operators with nonsmooth kernels. Under the hypothesis of Theorem \ref{thm1.2}, for each bounded function $f$ with compact support,
$$\mathcal{M}_{MT_1T_2}^{**}f(x)\lesssim MT_1T_2f(x)+M_{L\log L}T_2f(x)+M_{L(\log L)^2}f(x).$$
\end{lemma}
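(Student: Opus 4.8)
The plan is to fix $x\in\mathbb{R}^n$, a cube $Q\ni x$ and (a.e.) $\xi\in Q$, put $g:=T_2(f\chi_{\mathbb{R}^n\setminus 27Q})$, and bound
$MT_1(\chi_{\mathbb{R}^n\setminus 9Q}g)(\xi)=\sup_{P\ni\xi}\langle|T_1(\chi_{\mathbb{R}^n\setminus 9Q}g)|\rangle_P$ by the right-hand side evaluated at $x$; taking the supremum over $Q$ and the essential supremum over $\xi$ then gives the claim. I would split according to whether $\ell(P)\ge\ell(Q)$ or $\ell(P)<\ell(Q)$, and throughout use the homogeneous form of Lemma~\ref{lem2.1}: if $S$ is sublinear, $L^2(\mathbb{R}^n)$ bounded and $|\{|Sh|>\lambda\}|\lesssim\int_{\mathbb{R}^n}\frac{|h|}{\lambda}\log^{\varrho}({\rm e}+\frac{|h|}{\lambda})$, then $\langle|S(h\chi_{Q_2})|\rangle_{Q_1}\lesssim\|h\|_{L(\log L)^{\varrho+1},\,Q_2}$ whenever $|Q_2|\lesssim|Q_1|$ (apply Lemma~\ref{lem2.1} with $\beta=0$ to $h/\|h\|_{L(\log L)^{\varrho+1},\,Q_2}$).

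\emph{Large cubes.} If $\ell(P)\ge\ell(Q)$ then $x\in 3P$ and $9Q,27Q$ lie in a fixed dilate $CP$ of $P$ with $|CP|\approx|P|$. Writing $\chi_{\mathbb{R}^n\setminus 9Q}g=g-\chi_{9Q}g$ and $T_1g=T_1T_2f-T_1T_2(f\chi_{27Q})$, we get $T_1(\chi_{\mathbb{R}^n\setminus 9Q}g)=T_1T_2f-T_1T_2(f\chi_{27Q})-T_1(\chi_{9Q}g)$. Then $\langle|T_1T_2f|\rangle_P\lesssim MT_1T_2f(x)$; by the above form of Lemma~\ref{lem2.1} applied to $S=T_1T_2$, whose weak endpoint $|\{|T_1T_2f|>\lambda\}|\lesssim\int\frac{|f|}{\lambda}\log({\rm e}+\frac{|f|}{\lambda})$ is Theorem~\ref{thm2.1} with $k=2$, one has $\langle|T_1T_2(f\chi_{27Q})|\rangle_P\lesssim\|f\|_{L(\log L)^2,\,27Q}\le M_{L(\log L)^2}f(x)$; and by Lemma~\ref{lem2.1} applied to $S=T_1$ (weak $(1,1)$, Theorem~\ref{thm2.1} with $k=1$), $\langle|T_1(\chi_{9Q}g)|\rangle_P\lesssim\|g\|_{L\log L,\,9Q}$. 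Since $f\chi_{\mathbb{R}^n\setminus 27Q}$ is supported at distance $\gtrsim\ell(Q)$ from $9Q$, the restriction of $g=T_2(f\chi_{\mathbb{R}^n\setminus 27Q})$ to $9Q$ is $T_2$ of a far-away function: writing $T_2=T_2A_\tau+T_2(\mathrm{Id}-A_\tau)$ with $\tau\simeq\ell(Q)^s$, Assumption~\ref{a1.2} and \eqref{equ:1.3} give $\|g\|_{L^\infty(9Q)}\lesssim Mf(x)$ up to the term $T_2A_\tau(f\chi_{\mathbb{R}^n\setminus 27Q})$, hence $\|g\|_{L\log L,\,9Q}\lesssim Mf(x)\le M_{L(\log L)^2}f(x)$. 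So the large-$P$ contribution is $\lesssim MT_1T_2f(x)+M_{L(\log L)^2}f(x)$ plus the deferred $T_2A_\tau$ term.

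\emph{Small cubes.} If $\ell(P)<\ell(Q)$ then $P\subseteq 3Q$, so $\chi_{\mathbb{R}^n\setminus 9Q}g$ vanishes on $9Q\supseteq 3P$ and is supported at distance $\gtrsim\ell(Q)$ from $3Q$; I would split $T_1(\chi_{\mathbb{R}^n\setminus 9Q}g)=T_1A_t(\chi_{\mathbb{R}^n\setminus 9Q}g)+T_1\big((\mathrm{Id}-A_t)(\chi_{\mathbb{R}^n\setminus 9Q}g)\big)$ with $t$ a suitable multiple of $\ell(Q)^s$. By Assumption~\ref{a1.2} and an annular decomposition over the cubes $C2^kQ$, all of which contain $x$, one has $|T_1((\mathrm{Id}-A_t)(\chi_{\mathbb{R}^n\setminus 9Q}g))|\lesssim\sum_{k\ge0}2^{-k\alpha}\langle|g|\rangle_{C2^kQ}$ on $3Q$, independently of $P$; and $\langle|g|\rangle_{C2^kQ}\le MT_2f(x)+\langle|T_2(f\chi_{27Q})|\rangle_{C2^kQ}\lesssim M_{L\log L}T_2f(x)+M_{L\log L}f(x)$, the last average being controlled by the reformulated Lemma~\ref{lem2.1} for $S=T_2$. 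Summing in $k$, this part of the small-$P$ contribution is $\lesssim M_{L\log L}T_2f(x)+M_{L\log L}f(x)$. There remains $\sup_{P\subseteq 3Q,\,P\ni\xi}\langle|T_1A_t(\chi_{\mathbb{R}^n\setminus 9Q}g)|\rangle_P$, which together with the $T_2A_\tau$ term from the previous paragraph is the main obstacle.

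\emph{The obstacle and conclusion.} The difficulty with $\langle|T_1A_t(\chi_{\mathbb{R}^n\setminus 9Q}g)|\rangle_P$ is that $A_t(\chi_{\mathbb{R}^n\setminus 9Q}g)$ is not compactly supported, so Lemma~\ref{lem2.2} cannot be applied to it; I would argue by duality, writing for $v$ supported in $P$ with $|v|\le1$
$\int_P T_1A_t(\chi_{\mathbb{R}^n\setminus 9Q}g)\,v=\int_{\mathbb{R}^n\setminus 9Q}g\cdot A_t^*\widetilde{T_1}(v\chi_P)$, with $\widetilde{T_1}$ the adjoint of $T_1$, and estimating $A_t^*\widetilde{T_1}(v\chi_P)$ on $\mathbb{R}^n\setminus 9Q$ by means of the $L^2(\mathbb{R}^n)$ boundedness of $T_1$, the kernel decay \eqref{equ:1.3} of $A_t$, and the bound $\int_{6Q}|\widetilde{T_1}(v\chi_P)|\lesssim|Q|$ furnished by Lemma~\ref{lem2.1}; this is exactly the mechanism behind the treatment of ${\rm U}_4$ in the proof of Lemma~\ref{lem2.4} (and, originally, in \cite{duongmc}), and it again yields a bound $\lesssim M\big(T_2(f\chi_{\mathbb{R}^n\setminus 27Q})\big)(x)$, controlled as in the previous paragraph; the term $T_2A_\tau(f\chi_{\mathbb{R}^n\setminus 27Q})$ on $9Q$ is handled in the same way. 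Collecting the two cases, using $M_{L\log L}\le M_{L(\log L)^2}$ pointwise, and then passing to the supremum over $Q$ and essential supremum over $\xi$ gives the lemma. I expect the genuinely delicate point to be this last one: forcing the Duong--McIntosh regularizations of $T_1$ and $T_2$ to produce true pointwise bounds near $Q$, rather than merely $L^2$ or weak-type ones.
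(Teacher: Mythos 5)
Your large-cube decomposition $T_1(\chi_{\mathbb{R}^n\setminus 9Q}g)=T_1T_2f-T_1T_2(f\chi_{27Q})-T_1(\chi_{9Q}g)$ and the use of Lemma~\ref{lem2.1}/Theorem~\ref{thm2.1} on the local pieces are fine, but the argument has a genuine gap exactly where you flag ``the obstacle,'' and the mechanism you propose there does not close it. The treatment of ${\rm U}_4$ in Lemma~\ref{lem2.4} (and in Duong--McIntosh) produces only estimates of the form $\|\chi_{3c_1Q}T A_t b\|_{L^2(\mathbb{R}^n)}^2\lesssim |Q|$, i.e.\ control of \emph{averages over cubes of size comparable to $Q$}. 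What you need is $\sup_{P\ni\xi,\;P\subseteq 3Q}\langle|T_1A_t(\chi_{\mathbb{R}^n\setminus 9Q}g)|\rangle_P$ with $P$ arbitrarily small, which is an essential-supremum (pointwise a.e.) bound on $3Q$; no $L^2(3Q)$ or $L^1(3Q)$ average bound implies that. Concretely, in your duality step $\frac{1}{|P|}\int_P T_1A_t h\cdot v=\frac{1}{|P|}\int h\,A_t^*\widetilde{T_1}(v\chi_P)$ the best available bound on $\int_{6Q}|\widetilde{T_1}(v\chi_P)|$ is of order $|P|^{1/2}|Q|^{1/2}$ (or $|Q|$ via Lemma~\ref{lem2.1}), so the resulting estimate degenerates like $(|Q|/|P|)^{1/2}$ as $P$ shrinks. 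The same defect affects the deferred term $T_2A_\tau(f\chi_{\mathbb{R}^n\setminus 27Q})$, for which you need an $L^\infty(9Q)$ (or at least $L\log L(9Q)$-norm) bound: there simply is no pointwise bound for $TA_t$ applied to a far-supported function, because $A_t$ smears the support back onto $Q$.

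The way out --- and what the paper does --- is never to introduce the approximations $A_t$, $A_\tau$ inside a pointwise argument. The required pointwise control of $T_1$, $T_2$ acting on far-supported functions is already available as estimate \eqref{equ:3.-1}, $\mathcal{M}_{T}h(x)\lesssim MTh(x)+M_{L\log L}h(x)$, quoted from \cite{hu1}; for instance your $\|T_2(f\chi_{\mathbb{R}^n\setminus 27Q})\|_{L^\infty(9Q)}$ is immediately $\lesssim MT_2f(x)+M_{L\log L}f(x)$ from it, with no deferred term. The paper first upgrades \eqref{equ:3.-1} to the intermediate pointwise bound \eqref{equ:3.1} for $\mathcal{M}^*_{M_{L(\log L)^k}T_1}$ (using Lemma~\ref{lem2.2} for the local pieces), and then composes: $MT_1(\chi_{\mathbb{R}^n\setminus 9Q}T_2(f\chi_{\mathbb{R}^n\setminus 27Q}))(\xi)\lesssim\inf_{z\in Q}\mathcal{M}^*_{MT_1}\big(T_2(f\chi_{\mathbb{R}^n\setminus 27Q})\big)(z)\le\big(\langle(\mathcal{M}^*_{MT_1}(\cdot))^{1/2}\rangle_Q\big)^2$, after which the global terms are absorbed via $M_{1/2}M\lesssim M$ and $M_{1/2}M_{L\log L}\lesssim M_{L\log L}$, and the local terms $MT_1T_2(f\chi_{27Q})$, $M_{L\log L}T_2(f\chi_{27Q})$ are reduced to $\|f\|_{L(\log L)^2,Q}$ by Lemma~\ref{lem2.2} together with the weak-type endpoints of Theorem~\ref{thm2.1}. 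All problematic objects are thus only ever measured through $\frac{1}{|Q|}\int_Q(\cdot)^{1/2}$-averages over the fixed cube $Q$, which is precisely the scale at which the Duong--McIntosh machinery gives control. You should restructure your proof along these lines rather than trying to extract pointwise bounds from the $TA_t$ pieces.
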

\begin{proof}
At first, we claim that for $k\in\mathbb{N}\cup\{0\}$,\begin{eqnarray}\label{equ:3.1}\mathcal{M}_{M_{L(\log L)^k}T_1}^*f(x)\lesssim M_{L(\log L)^k}T_1f(x)+M_{L(\log L)^{k+1}}f(x).\end{eqnarray}
In fact, for each fixed
$Q\subset \mathbb{R}^n$, $x,\,\xi\in Q$, we can write
\begin{eqnarray*}M_{L(\log L)^k}T_1(g\chi_{\mathbb{R}^n\backslash 9Q})(\xi)&\leq& M_{L(\log L)^k}\big(\chi_{\mathbb{R}^n\backslash 3Q}Tg\big )(\xi)\\
&&+M_{L(\log L)^k}\big(\chi_{\mathbb{R}^n\backslash 3Q}T_1(g\chi_{9Q})\big )(\xi)\\
&&+M_{L(\log L)^k}\big(\chi_{3Q}T_1(g\chi_{\mathbb{R}^n\backslash 9Q})\big)(\xi).
\end{eqnarray*}
As it is well known, for each fixed $\xi\in Q$,
$$ M_{L(\log L)^k}\big(\chi_{\mathbb{R}^n\backslash 3Q}T_1g\big )(\xi)\lesssim\inf _{y\in Q}M_{L(\log L)^k}
\big(\chi_{\mathbb{R}^n\backslash 3Q}T_1g\big )(y)\lesssim M_{L(\log L)^k}T_1g(x).
$$
On the other hand, it follows from Lemma \ref{lem2.2} that for $\delta\in (0,\,1)$,
\begin{eqnarray*} M_{L(\log L)^k}\big(\chi_{\mathbb{R}^n\backslash 3Q}T_1(g\chi_{9Q})\big )(\xi)&\lesssim&\inf _{y\in Q}M_{L(\log L)^k}
\big(\chi_{\mathbb{R}^n\backslash 3Q}T_1(g\chi_{9Q})\big )(y)\\
&\lesssim &\Big(\frac{1}{|Q|}\int_{Q}|M_{L(\log L)^k}T_1(g\chi_{9Q})(z)|^{\delta}dz\Big)^{\frac{1}{\delta}}\\
&\lesssim&\|g\|_{L(\log L)^{k+1},\,Q}\lesssim M_{L(\log L)^{k+1}}g(x).
\end{eqnarray*}
The inequality (\ref{equ:3.-1}) tells us that
$$
\sup_{y\in 3Q}|T_1(g\chi_{\mathbb{R}^n\backslash 9Q})(y)|\lesssim \inf_{y\in 3Q}\mathcal{M}_{T_1}g(y)\lesssim  MT_1g(x)+M_{L\log L}g(x).
$$
Therefore, for each $\xi\in Q$,
$$M_{L(\log L)^k}\big(\chi_{3Q}T(g\chi_{\mathbb{R}^n\backslash 9Q})\big)(\xi)\lesssim MT_1g(x)+M_{L\log L}g(x).$$
The estimate (\ref{equ:3.1}) holds true.

We now conclude the proof of Lemma \ref{lem3.1}. Let $x\in \mathbb{R}^n$ and $Q$ be a cube containing $x$.
A trivial computation involving (\ref{equ:3.1}) leads to that for each $\xi\in Q$,
\begin{eqnarray*}
&&MT_1\big(\chi_{\mathbb{R}^n\backslash 9Q}T_2(f\chi_{\mathbb{R}^n\backslash 27Q})\big)(\xi)\lesssim
\inf_{z\in Q}\mathcal{M}^*_{MT_1}(T_2f\chi_{\mathbb{R}^n\backslash 27Q})(z)\\
&&\quad\lesssim\Big(\frac{1}{|Q|}\int_{Q}\big(\mathcal{M}^*_{MT_1}(T_2f\chi_{\mathbb{R}^n\backslash 27Q})(z)\big)^{\frac{1}{2}}dz\Big)^2\\
&&\quad\lesssim\Big(\frac{1}{|Q|}\int_{Q}\big(MT_1T_2f(z)\big)^{\frac{1}{2}}dz\Big)^2
+\Big(\frac{1}{|Q|}\int_{Q}\big(M_{L\log L}T_2f(z)\big)^{\frac{1}{2}}dz\Big)^2\\
&&\qquad+\Big(\frac{1}{|Q|}\int_{Q}\big(MT_1T_2(f\chi_{27Q})(\xi)\big)^{\frac{1}{2}}d\xi\Big)^2\\
&&\qquad+\Big(\frac{1}{|Q|}\int_{Q}\big(M_{L\log L}(T_2f\chi_{27Q})(\xi)\big)^{\frac{1}{2}}d\xi\Big)^2.
\end{eqnarray*}
Recall that $M_{\frac{1}{2}}Mh(x)\lesssim Mh(x)$ and $M_{\frac{1}{2}}M_{L\log L}h(x)\lesssim M_{L\log L}h(x)$. We know that
\begin{eqnarray*}&&\Big(\frac{1}{|Q|}\int_{Q}\big(MT_1T_2f(\xi)\big)^{\frac{1}{2}}d\xi\Big)^2
+\Big(\frac{1}{|Q|}\int_{Q}\big(M_{L\log L}T_2f(\xi)\big)^{\frac{1}{2}}d\xi\Big)^2\\
&&\quad\lesssim MT_1T_2f(x)+M_{L\log L}T_2f(x).
\end{eqnarray*}
On the other hand, it follows from Lemma \ref{lem2.2} that
\begin{eqnarray*}
&&\Big(\frac{1}{|Q|}\int_{Q}\big(MT_1T_2(f\chi_{27Q})(\xi)\big)^{\frac{1}{2}}d\xi\Big)^2+\Big(\frac{1}{|Q|}
\int_{Q}\big(M_{L(\log L)^2}(f\chi_{27Q})(\xi)\big)^{\frac{1}{2}}d\xi\Big)^2\\
&&\quad \lesssim\|f\|_{L(\log L)^2,\,Q}\lesssim M_{L(\log L)^2}f(x).
\end{eqnarray*}
Combining the estimates above yields desired conclusion.
\end{proof}
\begin{remark}\label{rem3.1} Recall that $M_{L(\log L)^k}$ is bounded on $L^p(\mathbb{R}^n)$ for all $p\in (1,\,\infty)$. Thus by (\ref{equ:3.1}), we know from Theorem 2.3 in \cite{hu4} that for each bounded function $f$ with compact support, there exists a sparse family of cubes $\mathcal{S}$, such that for a. e. $x\in\mathbb{R}^n$,
$$M_{L(\log L)^k}T_1f(x)\lesssim \sum_{Q\in \mathcal{S}}\|f\|_{L(\log L)^{k+1},\,Q}\chi_Q(x).$$
\end{remark}
\begin{remark}\label{rem3.2} We define the grand maximal operator $\mathcal{M}_{T_1T_2}^{**}$ by $$\mathcal{M}_{T_1T_2}^{**}f(x)=\sup_{Q\ni x}{\rm ess}\sup_{\xi\in Q}|T_1\big(\chi_{\mathbb{R}^n\backslash 9Q}
T_2(f\chi_{\mathbb{R}^n\backslash 27Q})\big)(\xi)|.$$
It follows from Lemma \ref{lem3.1} that $$\mathcal{M}_{T_1T_2}^{**}f(x)\lesssim MT_1T_2f(x)+M_{L\log L}T_2f(x)+M_{L(\log L)^2}f(x).$$

\end{remark}
As in Lerner \cite{ler4}, we define the bi-sublinear grand maximal operator $\mathscr{M}_{MT_1T_2}^{*}$ by
$$\mathscr{M}_{MT_1T_2}^{*}(f,g)(x)=\sup_{Q\ni x}\frac{1}{|Q|}\int_{Q}MT_1\big(\chi_{9Q}T_2(f\chi_{\mathbb{R}^n\backslash 27Q})\big)(\xi)|g(\xi)|d\xi.
$$
\begin{lemma}\label{lem3.2}
Let $T_1$, $T_2$ be  two singular integral operators with nonsmooth kernels. Under the hypothesis of Theorem \ref{thm1.2}, there exists an operators $U$,  such that
\begin{itemize}
\item[\rm (i)] for any bounded function $f$ with compact support and  any $\lambda>0$,
\begin{eqnarray*}|\{x\in\mathbb{R}^n:\,|Uf(x)|>\lambda\}|\lesssim \int_{\mathbb{R}^n}\frac{|f(x)|}{\lambda}\log\Big({\rm e}+\frac{|f(x)|}{\lambda}\Big)dx;\end{eqnarray*}
\item[\rm (ii)]
for any $q\in (1,\,2]$, bounded function $f$ with compact support and $g\in L^q_{{\rm loc}}(\mathbb{R}^n)$,
\begin{eqnarray*}\mathscr{M}_{MT_{1}T_2}^*(f,g)(x)&\lesssim &q'Uf(x)M_{q}g(x).\end{eqnarray*}
\end{itemize}
\end{lemma}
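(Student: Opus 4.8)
\emph{Proof proposal.}
The plan is to take $U$ to be the explicit sublinear operator
$$Uf(x):=MT_2f(x)+M_{L\log L}f(x),$$
and to verify (i) and (ii) in turn. Property (i) follows at once: Theorem \ref{thm2.1} applied to $T_2$ (with $k=1$, $\beta=0$) gives $|\{x\in\mathbb{R}^n:\,MT_2f(x)>\lambda\}|\lesssim\int_{\mathbb{R}^n}\frac{|f(x)|}{\lambda}\log\big({\rm e}+\frac{|f(x)|}{\lambda}\big)\,dx$, while (\ref{equ:2.2}) with $\beta=1$ gives the same bound for $M_{L\log L}f$; summing the two (and absorbing a factor of $2$ by splitting the level $\lambda$) yields the weak $L\log L$ estimate in (i).

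For (ii) I fix $x\in\mathbb{R}^n$ and a cube $Q\ni x$ and set $g_Q:=\chi_{9Q}\,T_2(f\chi_{\mathbb{R}^n\backslash 27Q})$, a function supported in $9Q$. The first step is the pointwise size estimate $\|g_Q\|_{L^\infty(\mathbb{R}^n)}\lesssim MT_2f(x)+M_{L\log L}f(x)=Uf(x)$. To obtain it, let $\mathcal{M}_{T_2}$ denote the grand maximal operator of $T_2$, the exact analogue of $\mathcal{M}_{T_1}$. Since $27Q=3\cdot(9Q)$, for every $z\in 9Q$ we have $\|g_Q\|_{L^\infty(\mathbb{R}^n)}={\rm ess}\sup_{\xi\in 9Q}|T_2(f\chi_{\mathbb{R}^n\backslash 27Q})(\xi)|\le\mathcal{M}_{T_2}f(z)$; choosing $z=x$ (legitimate because $x\in Q\subset 9Q$) and invoking the analogue of (\ref{equ:3.-1}) for $T_2$, established in \cite{hu1}, gives $\|g_Q\|_{L^\infty(\mathbb{R}^n)}\le\mathcal{M}_{T_2}f(x)\lesssim MT_2f(x)+M_{L\log L}f(x)$.

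The second step estimates the localized average defining $\mathscr{M}_{MT_1T_2}^*$. By H\"older's inequality with exponents $q$ and $q'$,
$$\frac{1}{|Q|}\int_{Q}MT_1g_Q(\xi)\,|g(\xi)|\,d\xi\le\Big(\frac{1}{|Q|}\int_Q\big(MT_1g_Q(\xi)\big)^{q'}d\xi\Big)^{1/q'}\Big(\frac{1}{|Q|}\int_Q|g(\xi)|^q d\xi\Big)^{1/q},$$
and the last factor is dominated by $M_qg(x)$. For the first factor I use that $M$ is bounded on $L^{q'}(\mathbb{R}^n)$ with norm $\lesssim 1$ (uniformly for $q'\in[2,\infty)$, the range forced by $q\in(1,2]$) and that $T_1$ is bounded on $L^{q'}(\mathbb{R}^n)$ with norm $\lesssim q'$ for $q'\in[2,\infty)$; the latter bound follows from the $L^2$-boundedness of $T_1$ together with the weak $(1,1)$ boundedness of both $T_1$ and its adjoint $\widetilde{T_1}$ — each satisfies Assumption \ref{a1.2}, hence Assumption \ref{a1.0}, so the weak $(1,1)$ bound of \cite{duongmc} applies — via the Marcinkiewicz interpolation theorem and duality. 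Since ${\rm supp}\,g_Q\subset 9Q$ and $\|g_Q\|_{L^\infty(\mathbb{R}^n)}\lesssim Uf(x)$, this yields
$$\Big(\frac{1}{|Q|}\int_Q\big(MT_1g_Q\big)^{q'}\Big)^{1/q'}\le\frac{\|M\|_{L^{q'}\to L^{q'}}\,\|T_1\|_{L^{q'}\to L^{q'}}}{|Q|^{1/q'}}\,\|g_Q\|_{L^{q'}(\mathbb{R}^n)}\lesssim q'\,Uf(x).$$
Combining the last three displays and taking the supremum over all cubes $Q\ni x$ gives $\mathscr{M}_{MT_1T_2}^*(f,g)(x)\lesssim q'\,Uf(x)\,M_qg(x)$, which is (ii). The hard part is the $L^\infty$ size estimate for $g_Q$: it is here that the nonsmooth-kernel structure of $T_2$ is really used, through the grand-maximal-function bound (\ref{equ:3.-1}); the only other quantitative point requiring care is the uniform estimate $\|T_1\|_{L^{q'}\to L^{q'}}\lesssim q'$ on $[2,\infty)$, which is exactly the source of the factor $q'$ in (ii).
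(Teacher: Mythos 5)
Your proposal is correct and follows essentially the same route as the paper: the same choice $Uf=MT_2f+M_{L\log L}f$, the same H\"older step with the $L^{q'}$ operator norms of $M$ and $T_1$ (the latter of size $Cq'$), the same reduction of $\|\chi_{9Q}T_2(f\chi_{\mathbb{R}^n\backslash 27Q})\|_{L^\infty}$ to $\mathcal{M}_{T_2}f(x)$ via the grand maximal function bound (\ref{equ:3.-1}), and the same appeal to Theorem \ref{thm2.1} and (\ref{equ:2.2}) for part (i). You merely spell out details (the interpolation/duality source of the $q'$ bound, the support and size estimates) that the paper leaves implicit.
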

\begin{proof} Let $Q\subset \mathbb{R}^n$ and $x\in Q$. Recall that for $q\in (1,\,2]$, $M$ is bounded on $L^{q'}(\mathbb{R}^n)$ with bound less than some universal constant $C$, and  $T_1$ is bounded on $L^{q'}(\mathbb{R}^n)$ with bound $Cq'$. It follows from H\"older's inequality that
\begin{eqnarray*}&&\frac{1}{|Q|}\int_{Q}MT_1\Big(\chi_{9Q}T_2(f\chi_{\mathbb{R}^n\backslash 27Q})\Big)(\xi)|g(\xi)|d\xi\\
&&\quad\lesssim
q'\Big(\frac{1}{|9Q|}\int_{9Q}|g(\xi)|^{q}d\xi\Big)^{\frac{1}{q}}
\mathcal{M}_{T_2}f(x)\\
&&\quad\lesssim  q'\big(MT_2f(x)+M_{L\log L}f(x)\big)M_{q}g(x).
\end{eqnarray*}
Let $Uf(x)=MT_2f(x)+M_{L\log L}f(x).$ Our desired conclusion then follows from Theorem \ref{thm2.1}.
\end{proof}
Let $\eta\in (0,\,1)$ and $\mathcal{S}=\{Q_j\}$ be a family of cubes. We say that $\mathcal{S}$ is $\eta$-sparse,  if for each fixed $Q\in \mathcal{S}$, there exists a measurable subset $E_Q\subset Q$, such that $|E_Q|\geq \eta|Q|$ and $E_{Q}$'s are pairwise disjoint.
Associated with  the sparse family $\mathcal{S}$, we define the bi-sublinear sparse operator
$\mathcal{A}_{\mathcal{S};\,L(\log L)^2,\,L^1}$  by
$$\mathcal{A}_{\mathcal{S};\,L(\log L)^2,\,L^1}(f,g)=
\sum_{Q\in\mathcal{S}}|Q|\|f\|_{L(\log L)^2,\,Q}\langle|g|\rangle_{Q},$$
the bi-sublinear
sparse operator $\mathcal{A}_{\mathcal{S},\,L\log L,\,L^q}$ (with $q\in [1,\,\infty)$) by
$$\mathcal{A}_{\mathcal{S};\,L\log L,L^q}(f,g)=\sum_{Q\in\mathcal{S}}|Q|\|f\|_{L\log L,Q}\langle |g|\rangle_{q,\,Q},$$
and the bi-sublinear sparse operator $\mathcal{A}_{\mathcal{S};L^{q_1},L^{q_2}}$ by
$$\mathcal{A}_{\mathcal{S};L^{q_1},L^{q_2}}(f,g)=\sum_{Q\in\mathcal{S}}\langle|f|\rangle_{q_1,\,Q}\langle|g|
\rangle_{q_2,\,Q}|Q|.$$

\begin{theorem}\label{thm3.2}Let $T_1$ and $T_2$ be singular integral operators with nonsmooth kernels. Suppose that both of $T_1$ and $T_2$ satisfy Assumption \ref{a1.1}  and Assumption \ref{a1.2}. Then for each bounded function $f$ with compact support, there exists a $\frac{1}{2}\frac{1}{27^n}$-sparse family of cubes $\mathcal{S}=\{Q\}$, such that for each  function $g\in L^2_{{\rm loc}}(\mathbb{R}^n)$,
$$\Big|\int_{\mathbb{R}^n}g(x)T_1T_2f(x)dx\Big|\lesssim \mathcal{A}_{\mathcal{S};L(\log L)^2,\,L^1}(f,g)+\mathcal{A}_{\mathcal{S};\,L\log L,L\log L}(f,\,g).
$$
\end{theorem}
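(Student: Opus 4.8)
The plan is to run Lerner's recursive construction of a sparse dominating family in its bi-sublinear form (cf.\ \cite{ler4,hu4}), with the grand maximal operators $\mathcal{M}_{T_1T_2}^{**}$ and $\mathscr{M}_{MT_1T_2}^{*}$ taking the place of the kernel estimates available for a single Calder\'on--Zygmund operator, and feeding in the endpoint bounds of Theorem \ref{thm2.1}, Lemmas \ref{lem3.1}, \ref{lem3.2} and Remark \ref{rem3.2}. Since the right-hand side involves only $|g|$, it suffices to bound $\int_{\mathbb{R}^n}|g|\,|T_1T_2f|$ with $g$ bounded and compactly supported (the family $\mathcal{S}$ will be built from $f$ alone, so the general case follows by monotone convergence), and by the usual dyadic-lattice reduction it is enough to fix a cube $Q^0$ with ${\rm supp}\,f\subset Q^0$ and produce a $\tfrac12$-sparse family $\mathcal{S}_0$ of dyadic subcubes of $Q^0$ with
$$\int_{Q^0}|g|\,|T_1T_2(f\chi_{27Q^0})|\;\lesssim\;\sum_{\widetilde Q\in\mathcal{S}_0}|\widetilde Q|\Big(\|f\|_{L(\log L)^2,\,27\widetilde Q}\langle|g|\rangle_{27\widetilde Q}+\|f\|_{L\log L,\,27\widetilde Q}\|g\|_{L\log L,\,27\widetilde Q}\Big).$$
Setting $\mathcal{S}=\{27\widetilde Q:\widetilde Q\in\mathcal{S}_0\}$, which is $\tfrac12\tfrac1{27^n}$-sparse because dilating an $\eta$-sparse family by $27$ keeps it $\tfrac{\eta}{27^n}$-sparse, and using $|\widetilde Q|=27^{-n}|27\widetilde Q|$, the right-hand side becomes $\mathcal{A}_{\mathcal{S};L(\log L)^2,L^1}(f,g)+\mathcal{A}_{\mathcal{S};L\log L,L\log L}(f,g)$.

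The core is a single-step estimate. Given a cube $Q$, I would take $\{P_j\}$ to be the maximal dyadic subcubes $P\subset Q$ on which at least one of $\langle MT_1T_2(f\chi_{27Q})\rangle_{1/2,P}$, $\langle M_{L\log L}T_2(f\chi_{27Q})\rangle_{1/2,P}$, $\langle M_{L(\log L)^2}(f\chi_{27Q})\rangle_{1/2,P}$ exceeds a large fixed multiple $\Lambda_Q$ of $\|f\|_{L(\log L)^2,27Q}$. Each of these three maximal operators maps $L(\log L)^2$ into $L^{1,\infty}$ — the first and second by \eqref{equ:2.12} with $(\beta,k)=(0,2)$ and $(1,1)$, the third by \eqref{equ:2.2} — so Lemma \ref{lem2.2} gives that their $\langle\cdot\rangle_{1/2,27Q}$-averages are $\lesssim\|f\|_{L(\log L)^2,27Q}$; hence for $\Lambda_Q$ large, Chebyshev yields $\sum_j|P_j|\le\tfrac12|Q|$ and $Q$ itself is not selected, so $E_Q:=Q\setminus\bigcup_jP_j$ satisfies $|E_Q|\ge\tfrac12|Q|$. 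On $E_Q$ one has, via Lebesgue differentiation, $|T_1T_2(f\chi_{27Q})(x)|\le MT_1T_2(f\chi_{27Q})(x)\le\Lambda_Q$ a.e., whence $\int_{E_Q}|g|\,|T_1T_2(f\chi_{27Q})|\lesssim\|f\|_{L(\log L)^2,27Q}\langle|g|\rangle_{27Q}|Q|$. For $x\in P_j$ I would write
$$T_1T_2(f\chi_{27Q})(x)=T_1T_2(f\chi_{27P_j})(x)+T_1\big(\chi_{9P_j}T_2(f\chi_{27Q\setminus27P_j})\big)(x)+T_1\big(\chi_{\mathbb{R}^n\setminus9P_j}T_2(f\chi_{27Q\setminus27P_j})\big)(x).$$
By its very definition $\mathcal{M}_{T_1T_2}^{**}(f\chi_{27Q})(x)$ bounds the last term a.e.\ on $P_j$, and by Remark \ref{rem3.2} together with the choice of the stopping cubes (the parent of each $P_j$ fails the stopping condition) one gets $\inf_{P_j}\mathcal{M}_{T_1T_2}^{**}(f\chi_{27Q})\lesssim\Lambda_Q\lesssim\|f\|_{L(\log L)^2,27Q}$; the middle term, after integrating $|g|$ over $P_j$ and using $|T_1u|\le M(T_1u)$ a.e., is at most $|P_j|\inf_{P_j}\mathscr{M}_{MT_1T_2}^{*}(f\chi_{27Q},g)$. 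Collecting these,
$$\int_{Q}|g|\,|T_1T_2(f\chi_{27Q})|\;\lesssim\;\|f\|_{L(\log L)^2,27Q}\langle|g|\rangle_{27Q}|Q|\;+\;\sum_j|P_j|\inf_{P_j}\mathscr{M}_{MT_1T_2}^{*}(f\chi_{27Q},g)\;+\;\sum_j\int_{P_j}|g|\,|T_1T_2(f\chi_{27P_j})|,$$
and iterating (the $P_j$'s becoming the children of $Q$ in $\mathcal{S}_0$) produces the first block of the claimed bound together with the accumulated middle-annular sum.

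The remaining point is to show that $\sum_{\widetilde Q\in\mathcal{S}_0}\sum_j|P_j|\inf_{P_j}\mathscr{M}_{MT_1T_2}^{*}(f\chi_{27\widetilde Q},g)\lesssim\mathcal{A}_{\mathcal{S};L\log L,L\log L}(f,g)$. By Lemma \ref{lem3.2} one has $\mathscr{M}_{MT_1T_2}^{*}(f\chi_{27\widetilde Q},g)\lesssim q'\,U(f\chi_{27\widetilde Q})\,M_qg$ for each $q\in(1,2]$, where $U$ is of weak type $(L\log L,L^{1,\infty})$; summing over the disjoint children $P_j\subset\widetilde Q$ and over the tree, and then applying Kolmogorov's inequality to $U$ (Lemma \ref{lem2.2} with $s=1$) together with the standard optimisation $q-1\sim1/\log(\mathrm{e}+\cdots)$ and an $L\log L$-maximal-function estimate for the $g$-factor — exactly as in the proof of Theorem \ref{thm1.1} in \cite{hu1} — each summand is turned into a multiple of $|27\widetilde Q|\,\|f\|_{L\log L,27\widetilde Q}\,\|g\|_{L\log L,27\widetilde Q}$, which is the $\mathcal{A}_{\mathcal{S};L\log L,L\log L}$-term.

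I expect the main obstacle to be the one-step recursion for the composite operator. Because $T_1T_2$ carries no pointwise kernel bounds, every near/far split of $T_1T_2(f\chi_{27Q})$ has to be routed through $\mathcal{M}_{T_1T_2}^{**}$, $\mathscr{M}_{MT_1T_2}^{*}$ and, inside their proofs, through $\mathcal{M}_{T_1}$ and $\mathcal{M}_{M_{L(\log L)^k}T_1}^{*}$, so the dilation radii $3,9,27$ must be matched exactly for each piece to fall within the scope of an already-established lemma; and one must make sure that everything left of $T_1T_2(f\chi_{27Q})$ after the stopping is controlled by $\|f\|_{L(\log L)^2,27Q}$ while only the genuinely middle-annular part gives the (larger) $L\log L\times L\log L$ sparse term — verifying that this last term really sums to $\mathcal{A}_{\mathcal{S};L\log L,L\log L}(f,g)$ rather than something carrying extra logarithmic factors is the most delicate piece of bookkeeping.
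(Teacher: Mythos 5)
Your overall architecture (dyadic stopping time, good set bounded by $\|f\|_{L(\log L)^2,27Q}$, three-way near/middle/far splitting on each selected cube $P_j$, iteration) matches the paper's, but there is a genuine gap in how you treat the middle-annular term, and it is exactly the point you flagged as delicate. You propose to control $\sum_j|P_j|\inf_{P_j}\mathscr{M}^{*}_{MT_1T_2}(f\chi_{27Q},g)$ through Lemma \ref{lem3.2}, which gives the bound $q'\,U(f\chi_{27Q})(x)\,M_qg(x)$ with $q\in(1,2]$, and then to convert the $g$-factor into $\|g\|_{L\log L,27Q}$ by ``optimisation in $q$.'' This cannot work: for any fixed $q>1$ the quantity $\langle|g|\rangle_{q,Q}$ is \emph{not} dominated by $C_q\|g\|_{L\log L,Q}$ (take $g$ locally in $L\log L$ but in no $L^q$, $q>1$), and letting $q\downarrow 1$ costs the factor $q'\to\infty$, which comes from the genuine blow-up of the $L^{q'}$ operator norms of $M$ and $T_1$. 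The optimisation $q-1\sim 1/\log(\cdot)$ is available when one proves a weighted weak-type inequality with a maximal-modified weight (where $q$ may depend on $\lambda$ or on the weight), but here you need a single sparse form valid for all $g$ simultaneously. This is precisely why the paper's Theorem \ref{thm3.1}, which \emph{does} go through $\mathscr{M}^{*}_{MT_1T_2}$, only reaches the weaker form $q'\mathcal{A}_{\mathcal{S};L\log L,L^q}(f,g)$; Theorem \ref{thm3.2} claims the stronger $\mathcal{A}_{\mathcal{S};L\log L,L\log L}(f,g)$ and therefore must avoid this route.

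The paper's fix is to dualize the middle term onto the adjoint of $T_1$: writing $G_1=\sum_l T_1\big(\chi_{9P_l}T_2(f\chi_{27Q_0\backslash 27P_l})\big)\chi_{P_l}$, one has
$$\Big|\int G_1g\Big|\le\sum_l\int_{9P_l}\big|T_2(f\chi_{27Q_0\backslash 27P_l})\big|\,\big|\widetilde T_1(g\chi_{P_l})\big|\,dx
\lesssim\sum_l\inf_{\xi\in P_l}\mathcal{M}_{T_2,Q_0}f(\xi)\int_{9P_l}\big|\widetilde T_1(g\chi_{P_l})\big|\,dx,$$
and then Lemma \ref{lem2.1} applied to the weak $(1,1)$ operator $\widetilde T_1$ gives $\int_{9P_l}|\widetilde T_1(g\chi_{P_l})|\lesssim |P_l|+\int_{P_l}|g|\log({\rm e}+|g|)$, which sums to the $L\log L$ average of $g$ over $Q_0$. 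To make the $f$-factor come out as $\langle|f|\rangle_{27Q_0}$ (hence $\le\|f\|_{L\log L,27Q_0}$), the exceptional set must also include $\{\mathcal{M}_{T_2,Q_0}f>D\langle|f|\rangle_{27Q_0}\}$, a stopping condition absent from your construction. So you should add that condition to your stopping family and replace the $\mathscr{M}^{*}$-plus-Kolmogorov treatment of the middle term by this duality argument; the rest of your proposal (good set, far piece via $\mathcal{M}^{**}_{T_1T_2}$ and Remark \ref{rem3.2}, iteration, and the $27$-dilation bookkeeping for sparseness) is sound and agrees with the paper.
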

\begin{proof} We will employ the argument in \cite{ler4}, see suitable variance in \cite{hu3}. For a fixed cube $Q_0$, define the  local analogy of $\mathcal{M}_T$ and $\mathcal{M}^{**}_{T_{1}T_{2}}$ by
$$ \mathcal{M}_{T,\,Q_0}f(x)=\sup_{Q\ni x,\, Q\subset Q_0}{\rm ess}\sup_{\xi\in Q}|T(f\chi_{27Q_0\backslash 27Q})(\xi)|,$$
$$\mathcal{M}^{**}_{T_{1}T_2;\,Q_0}f(x)=\sup_{Q\ni x,\, Q\subset Q_0}{\rm ess}\sup_{\xi\in Q}|T_1\big(\chi_{\mathbb{R}^n\backslash 9Q}
T_2(f\chi_{27Q_0\backslash 27Q})\big)(\xi)|,$$
respectively. Let $E=\cup_{j=1}^3E_j$ with
$$E_1=\big\{x\in Q_0:\, |T_{1}T_2(f\chi_{9Q_0})(x)|>D\|f\|_{L(\log L)^2,\,27Q_0}\big\},$$
$$E_2=\{x\in Q_0:\,\mathcal{M}_{T_2,\,Q_0}f(x)>D\langle |f|\rangle_{27Q_0}\},$$
and
$$E_3=\big\{x\in Q_0:\, \mathcal{M}^{**}_{T_{1}T_2;\,Q_0}f(x)>D\|f\|_{L\log L,27Q_0}\big\},$$
with $D$ a positive constant. It then follows from Theorem \ref{thm2.1}, estimate (\ref{equ:3.-1}) and Remark \ref{rem3.2}  that
$$|E|\le \frac{1}{2^{n+2}}|Q_0|,$$
if we choose $D$ large enough. Now on the cube $Q_0$, we apply the Calder\'on-Zygmund decomposition to $\chi_{E}$ at level $\frac{1}{2^{n+1}}$, and obtain pairwise disjoint cubes $\{P_j\}\subset \mathcal{D}(Q_0)$, such that
$$\frac{1}{2^{n+1}}|P_j|\leq |P_j\cap E|\leq \frac{1}{2}|P_j|$$
and $|E\backslash\cup_jP_j|=0$.  Observe that $\sum_j|P_j|\leq \frac{1}{2}|Q_0|$.
Let
\begin{eqnarray*}
G_0(x)&=&T_{1} T_2(f\chi_{27Q_0})(x)\chi_{Q_0\backslash \cup_{l}P_l}(x)\\
&&+\sum_{l}T_{1}\Big(\chi_{\mathbb{R}^n\backslash 9P_l}T_2(f\chi_{27Q_0\backslash 27P_l}\big)\Big)(x)\chi_{P_l}(x).
\end{eqnarray*}
The facts that $P_j\cap E^c\not =\emptyset$ and $|E\backslash\cup_jP_j|=0$ imply that
\begin{eqnarray}\label{eq3.8}|G_0(x)|\lesssim \|f\|_{L(\log L)^2,\,27Q_0}.\end{eqnarray}
Also, we define function $G_1$ by
$$G_1(x)=\sum_lT_{1}\big(\chi_{9P_l}T_2\big(f\chi_{27Q_0\backslash 27P_l})\big)(x)\chi_{P_l}(x).
$$
Let $\widetilde{T}_1$ be the adjoint operator of $T_1$. For each function $g\in L^2_{{\rm loc}}(\mathbb{R}^n)$, we have by Lemma \ref{lem2.1} that
\begin{eqnarray}\label{eq3.10}
\Big|\int_{\mathbb{R}^n}G_{1}(x)g(x)dx\Big|&\le &\sum_l\int_{9P_l}\big|T_2\big(f\chi_{9Q_0\backslash 9P_l}\big)(x)\widetilde{T}_{1}(g\chi_{P_l})(x)\big|dx\\
&\lesssim&\sum_l\inf_{\xi\in P_l}\mathcal{M}_{T_2,\,Q_0}f(\xi)\int_{9P_l}|\widetilde{T}_{1}(g\chi_{P_l})(x)|dx\nonumber\\
&\lesssim&\langle |f|\rangle_{27Q_0}\|g\|_{L(\log L)^2,\,Q_0}|Q_0|.\nonumber
\end{eqnarray}
Moreover,
$$T_{1} T_2(f\chi_{27Q_0})(x)\chi_{Q_0}(x)=G_0(x)+G_1(x)+\sum_{l}T_{1}T_2(\chi_{27P_l})(x)\chi_{P_l}(x).$$

We now repeat the argument above with $T_{1}T_2(f\chi_{27Q_0})(x)\chi_{Q_0}$ replaced by each $T_{1}T_2(\chi_{27P_l})(x)\chi_{P_l}(x)$, and so on.
Let $Q_{0}^{j_1}=P_{j}$,  and  $\{Q_{0}^{j_1...j_{m-1}j_m}\}_{j_m}$ be the cubes obtained at the $m$-th stage of the decomposition process to the cube $Q_{0}^{j_1...j_{m-1}}$.
For each fixed $j_1\dots,j_m$, define the functions $H_{Q_0}^{j_1\dots j_m}f$, $H_{Q_0,1}^{j_1\dots j_m}f$ by
\begin{eqnarray*}
&&H_{Q_0,0}^{j_1\dots j_m}f(x)=T_{1}\big(\chi_{\mathbb{R}^n\backslash 9Q_0^{j_1\dots j_m}}T_{2}(f\chi_{27Q_0^{j_1\dots j_{m-1}}\backslash 27Q_0^{j_1\dots j_m}})\big)(x)\chi_{Q_0^{j_1\dots j_m}}(x),
\end{eqnarray*}
and
$$H_{Q_0,1}^{j_1\dots j_m}f(x)=T_{1}\Big(\chi_{9Q_{0}^{j_1\dots j_m}}T_2(f\chi_{27Q_{0}^{j_1\dots j_{m-1}}\backslash 27Q_{0}^{j_1\dots j_m}})\big)\Big)(x)\chi_{Q_{0}^{j_1\dots j_m}}(x),
$$ respectively.
Set $\mathcal{F}=\{Q_0\}\cup_{m=1}^{\infty}\cup_{j_1,\dots,j_m}\{Q_{0}^{j_1\dots j_m}\}$. Then $\mathcal{F}\subset \mathcal{D}(Q_0)$ be a $\frac{1}{2}$-sparse  family.  Let
\begin{eqnarray*}&&H_{0,\,Q_0}(x)=T_{1}T_2(f\chi_{27Q_0})\chi_{Q_0\backslash \cup_{j_1}Q_{0}^{j_1}}(x)\\
&&\quad+\sum_{m=1}^{\infty}\sum_{j_1,\dots,j_m}T_{1}T_2(f\chi_{27Q_{0}^{j_1\dots j_m}})\chi_{Q_{0}^{j_1\dots j_m}\backslash \cup_{j_{m+1}}Q_{0}^{j_1\dots j_{m+1}}}(x)\\
&&\quad+\sum_{m=1}^{\infty}\sum_{j_1\dots j_m}H_{Q_0}^{j_1\dots j_m}f(x)\chi_{Q_{0}^{j_1\dots j_m}}(x),
\end{eqnarray*}
Also, we define the functions $H_{1,\,Q_0}$ by
\begin{eqnarray*}H_{1,Q_0}(x)&=&\sum_{m=1}^{\infty}\sum_{j_1\dots j_m}H_{Q_0,1}^{j_1\dots j_m}f(x)\chi_{Q_{0}^{j_1\dots j_m}}(x).
\end{eqnarray*}Then for a. e. $x\in Q_0$,
$$T_{1} T_2(f\chi_{27Q_0})(x)=H_{0,Q_0}f(x)+H_{1,Q_0}(x).
$$
Moreover, as in the inequality (\ref{eq3.8})-(\ref{eq3.10}), the process of producing $\{Q_0^{j_1\dots j_m}\}$ tells us that
$$|H_{0,Q_0}f(x)|\chi_{Q_0}(x)\lesssim \sum_{Q\in\mathcal{F}}\|f\|_{L(\log L)^2,\,27Q}\chi_{Q}(x).$$
For any function $g$, we can verify that
$$\Big|\int_{Q_0}g(x)H_{1,Q_0}(x)dx\Big|\lesssim \sum_{Q\in \mathcal{F}}|Q|\|f\|_{L\log L,\,27Q}\|g\|_{L\log L,\,27Q}.$$

We can now conclude the proof of Theorem \ref{thm3.2}. In fact, as in \cite{ler3}, we decompose $\mathbb{R}^n$ by cubes $R_l$, such that ${\rm supp}f\subset 27R_l$ for each $l$, and $R_l$'s have disjoint interiors.
Then for a. e. $x\in\mathbb{R}^n$,
\begin{eqnarray*}T_{1} T_2f(x)&=&\sum_{l}H_{0,R_l}f(x)+\sum_lH_{1,R_l}f(x):=H_0f(x)+H_1f(x).\end{eqnarray*}
Our desired conclusion then follows directly.
\end{proof}

\begin{theorem}\label{thm3.1}Let $T_1$ and $T_2$ be two  operators with nonsmooth kernels, $q\in (1,\,2]$.
Suppose that $T_1$ and $T_2$ satisfy Assumption \ref{a1.1} and Assumption \ref{a1.2}.
Then for  bounded function $f$ with compact support and $g\in L^{q}_{{\rm loc}}(\mathbb{R}^n)$, there exists a $\frac{1}{2}\frac{1}{27^n}$-sparse
family of cubes $\mathcal{S}=\{Q\}$, such that for a. e. $x\in\mathbb{R}^n$,
\begin{eqnarray*}
\int_{\mathbb{R}^n}|g(x)|MT_{1}T_2f(x)dx\lesssim\mathcal{A}_{\mathcal{S};\,L(\log L)^2, L}(f,g)+q'\mathcal{A}_{\mathcal{S};\,L\log L, L^q}(f,g).
\end{eqnarray*}
\end{theorem}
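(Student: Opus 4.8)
The plan is to carry out the Lerner-type recursive (stopping-time) construction exactly as in the proof of Theorem~\ref{thm3.2}, but with $T_1T_2$ replaced by $MT_1T_2$. The one genuinely new feature is that the outer Hardy--Littlewood maximal operator $M$ is sublinear but not linear, so in the ``bad part'' one cannot pass to the adjoint $\widetilde{T}_1$; instead one uses the bi-sublinear grand maximal operator $\mathscr{M}^{*}_{MT_1T_2}$ together with Lemma~\ref{lem3.2} (this is the bi-sublinear scheme of Lerner \cite{ler4}), and it is precisely the $L^{q'}$-boundedness of $M$ and of $T_1$, with $\|T_1\|_{L^{q'}\to L^{q'}}\lesssim q'$, entering Lemma~\ref{lem3.2} that produces both the prefactor $q'$ and the $L^{q}$-average of $g$ in the second sparse term. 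As usual one first fixes a single cube $Q_0$ and, at the end, decomposes $\mathbb{R}^n=\bigcup_l R_l$ into cubes with ${\rm supp}\,f\subset 27R_l$ as in \cite{ler3}.

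For a fixed cube $Q_0$, let $\mathcal{M}^{**}_{MT_1T_2;Q_0}$ be the local version of $\mathcal{M}^{**}_{MT_1T_2}$, with the supremum restricted to cubes $Q\ni x$, $Q\subset Q_0$, and with $f\chi_{27Q_0\backslash 27Q}$ in place of $f\chi_{\mathbb{R}^n\backslash 27Q}$. Put $E=E_1\cup E_2\cup E_3$, where
$$E_1=\big\{x\in Q_0:\ MT_1T_2(f\chi_{27Q_0})(x)>D\|f\|_{L(\log L)^2,\,27Q_0}\big\},$$
$$E_2=\big\{x\in Q_0:\ \mathcal{M}_{T_2}(f\chi_{27Q_0})(x)>D\|f\|_{L\log L,\,27Q_0}\big\},$$
$$E_3=\big\{x\in Q_0:\ \mathcal{M}^{**}_{MT_1T_2;Q_0}f(x)>D\|f\|_{L(\log L)^2,\,27Q_0}\big\}.$$
By Theorem~\ref{thm2.1} (with $\beta=0$, $k=2$ for $MT_1T_2$, and with $\beta=1$, $k=1$ for $M_{L\log L}T_2$), by $(\ref{equ:2.2})$, by $(\ref{equ:3.-1})$ (applied also to $T_2$) and by Lemma~\ref{lem3.1}, each of $MT_1T_2$, $\mathcal{M}_{T_2}$ and $\mathcal{M}^{**}_{MT_1T_2;Q_0}$ satisfies the corresponding weak-type $L(\log L)^2$ (respectively $L\log L$) inequality; hence, taking $D$ large enough, $|E|\le 2^{-n-2}|Q_0|$. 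Applying the Calder\'on--Zygmund decomposition to $\chi_E$ on $Q_0$ at level $2^{-n-1}$ produces pairwise disjoint $\{P_j\}\subset\mathcal{D}(Q_0)$ with $2^{-n-1}|P_j|\le|P_j\cap E|\le\frac12|P_j|$, $\sum_j|P_j|\le\frac12|Q_0|$, and $P_j\cap E^{c}\neq\emptyset$ for each $j$.

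The splitting is the standard one: for $x\in P_j$, by the sublinearity of $M$,
$$MT_1T_2(f\chi_{27Q_0})(x)\le MT_1T_2(f\chi_{27P_j})(x)+M\big(T_1(\chi_{9P_j}T_2(f\chi_{27Q_0\backslash 27P_j}))\big)(x)+M\big(T_1(\chi_{\mathbb{R}^n\backslash 9P_j}T_2(f\chi_{27Q_0\backslash 27P_j}))\big)(x);$$
the first term feeds the recursion on $P_j$, the third term together with the restriction of $MT_1T_2(f\chi_{27Q_0})$ to $Q_0\backslash\bigcup_jP_j$ is the ``good part'' (each $\lesssim D\|f\|_{L(\log L)^2,\,27Q_0}$ on its support, by $E_1$, $E_3$ and the fact that each $P_j$ meets $E^{c}$), and the middle term is the ``bad part''. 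Iterating yields a $\frac12$-sparse family $\mathcal{F}\subset\mathcal{D}(Q_0)$ whose good part $H_{0,Q_0}f$ obeys $|H_{0,Q_0}f(x)|\chi_{Q_0}(x)\lesssim\sum_{Q\in\mathcal{F}}\|f\|_{L(\log L)^2,\,27Q}\chi_Q(x)$, so that $\int_{Q_0}|g|\,H_{0,Q_0}f\lesssim\sum_{Q\in\mathcal{F}}|Q|\,\|f\|_{L(\log L)^2,\,27Q}\langle|g|\rangle_{27Q}$. For the bad part, a generic summand attached to a cube $P\in\mathcal{F}$ with $\mathcal{F}$-parent $\widehat{Q}$ is $\int_P|g|\,M\big(T_1(\chi_{9P}T_2(f\chi_{27\widehat{Q}\backslash 27P}))\big)$; running the computation in the proof of Lemma~\ref{lem3.2} with the cube $P$ --- H\"older's inequality in $(q,q')$ combined with $\|M\|_{L^{q'}\to L^{q'}}\lesssim1$ and $\|T_1\|_{L^{q'}\to L^{q'}}\lesssim q'$ --- gives
$$\frac{1}{|P|}\int_P|g|\,M\big(T_1(\chi_{9P}T_2(f\chi_{27\widehat{Q}\backslash 27P}))\big)\lesssim q'\,\langle|g|\rangle_{q,\,9P}\,\sup_{9P}\big|T_2(f\chi_{27\widehat{Q}\backslash 27P})\big|\lesssim q'\,\langle|g|\rangle_{q,\,9P}\,\mathcal{M}_{T_2}(f\chi_{27\widehat{Q}})(x_0)$$
for any $x_0\in P$, the last step taking the cube $9P$ in the grand maximal (note $3\cdot 9P=27P$); choosing $x_0\in P\cap E^{c}_{\widehat{Q}}$ and invoking $E_2$ of $\widehat{Q}$ bounds the last factor by $D\|f\|_{L\log L,\,27\widehat{Q}}$. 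Summing first over the Calder\'on--Zygmund cubes $P$ of a fixed $\widehat{Q}$ (using the bounded overlap of $\{9P\}$ and a discrete H\"older inequality), then over $\widehat{Q}\in\mathcal{F}$, and finally over $l$, one obtains $\int_{\mathbb{R}^n}|g|\,H_1f\lesssim q'\sum_{Q\in\mathcal{S}}|Q|\,\|f\|_{L\log L,\,27Q}\langle|g|\rangle_{q,\,27Q}$, where $\mathcal{S}$ consists of the dilated cubes $27Q$, $Q\in\mathcal{F}$, taken over all $Q_0=R_l$, and is $\frac{1}{2}\frac{1}{27^n}$-sparse (the $27^{-n}$ absorbing the dilation). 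Combining the two contributions yields the asserted estimate.

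I expect the bad-part estimate to be the main obstacle. Since $M$ is not linear the duality argument used for $T_1T_2$ in Theorem~\ref{thm3.2} is unavailable, and one is forced through the $L^{q'}$-theory of $T_1$; this costs exactly one factor $\|T_1\|_{L^{q'}\to L^{q'}}\lesssim q'$ (the origin of the $q'$ in the statement) and pairs with the $L^q$-average of $g$ (the origin of $\langle|g|\rangle_{q,\,\cdot}$). The point requiring care is to route the $T_2$-factor through the grand maximal operator $\mathcal{M}_{T_2}$, so that it costs only $\|f\|_{L\log L,\,27\widehat{Q}}$ rather than a larger $L(\log L)^2$ quantity; this is exactly what dictates the dilation constants ($3$ inside $\mathcal{M}_{T_2}$, $9$ and $27$ inside $\mathscr{M}^{*}_{MT_1T_2}$, and $27$ in the stopping cubes), arranged so that $3\cdot 9P=27P$ matches the cutoff in $f\chi_{27\widehat{Q}\backslash 27P}$. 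Beyond this, and the routine passage from $\mathcal{F}$ to the dilated sparse family $\mathcal{S}$, the argument is the same Lerner recursion already used for Theorem~\ref{thm3.2}.
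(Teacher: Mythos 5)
Your overall architecture (the Lerner recursion on a fixed $Q_0$, an exceptional set, the Calder\'on--Zygmund decomposition of $\chi_E$ at level $2^{-n-1}$, and the three-way splitting of $MT_1T_2(f\chi_{27Q_0})$ on each child $P$) matches the paper's, and the good part and the $L^{q'}$ bookkeeping for $M$ and $T_1$ are handled correctly. The gap is in the bad part. You bound each summand by $q'\,|P|\,\langle|g|\rangle_{q,\,9P}\,\|f\|_{L\log L,\,27\widehat{Q}}$ and then sum over the children $P$ of $\widehat{Q}$ ``using the bounded overlap of $\{9P\}$''. But the $P$'s are Calder\'on--Zygmund (maximal dyadic) cubes of $\chi_E$, not Whitney cubes: disjoint dyadic cubes from arbitrarily many generations can cluster near a single point, so $\sum_P\chi_{9P}$ is unbounded in general and the discrete H\"older step $\sum_P|P|\langle|g|\rangle_{q,9P}\le\big(\sum_P|P|\big)^{1/q'}\big(\sum_P 9^{-n}\int_{9P}|g|^q\big)^{1/q}$ does not close. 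The loss is genuine, not an artifact: taking $g$ concentrated on a small ball and children of sides $2^{-1},\dots,2^{-N}$ clustered at its center gives $\sum_P|P|\langle|g|\rangle_{q,9P}\approx q'\,|Q_0|\,\langle|g|\rangle_{q,27Q_0}$, and the natural repair (replacing $\langle|g|\rangle_{q,9P}$ by $\inf_P M_qg$ and integrating, which costs the $L^{q,\infty}\to L^1_{\rm loc}$ embedding constant $\sim q'$) produces $q'^2$ rather than $q'$ in the sparse bound, hence an extra factor of $[w]_{A_\infty}$ in Theorem \ref{thm1.2}.

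The paper avoids this by exactly the device you name in your preamble but then leave out of the construction: the bi-sublinear grand maximal operator is built into the exceptional set, namely $E_3=\{x\in Q_0:\ \mathscr{M}^{*}_{MT_1T_2,\,Q_0}(f,g)(x)>Dq'\|f\|_{L\log L,27Q_0}\langle|g|\rangle_{q,Q_0}\}$, whose measure is controlled by Lemma \ref{lem3.2}. Then $P_j\cap E^c\neq\emptyset$ yields directly
$\frac{1}{|P_j|}\int_{P_j}MT_1\big(\chi_{9P_j}T_2(f\chi_{27Q_0\backslash 27P_j})\big)|g|\le Dq'\|f\|_{L\log L,27Q_0}\langle|g|\rangle_{q,Q_0}$,
with the $g$-average pinned to the parent $Q_0$ rather than to $9P_j$, so the sum over children uses only $\sum_j|P_j|\le|Q_0|$. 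Replace your stopping condition on $\mathcal{M}_{T_2}$ (which is needed only inside the proof of Lemma \ref{lem3.2}, where the $3\cdot 9=27$ dilation you describe indeed occurs) by this bi-sublinear stopping condition, and the argument closes with the stated constant.
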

\begin{proof}  We will employ the argument in \cite{ler4}. For a fixed cube $Q_0$, define the  local analogues of $\mathcal{M}_{MT_{1}T_2}^{**}$ and $\mathscr{M}_{MT_{1}T_2}^*$
 by
\begin{eqnarray*}\mathcal{M}_{MT_1T_2,Q_0}^{**}f(x)=\sup_{x\in Q\subset Q_0}{\rm ess}\sup_{\xi\in Q}MT_1\big(\chi_{\mathbb{R}^n\backslash 9Q}T_2(f\chi_{27Q_0\backslash 27Q})\big)(\xi),\end{eqnarray*}
$$\mathscr{M}_{MT_1T_2,\,Q_0}^{*}(f,g)(x)=\sup_{x\in Q\subset Q_0}\frac{1}{|Q|}\int_{Q}MT_1\big(\chi_{9Q}T_2(\chi_{27Q_0\backslash 27Q})\big)(\xi)|g(\xi)|d\xi
$$respectively.
We claim that for each cube $Q_0\subset \mathbb{R}^n$,
there exist pairwise disjoint cubes $\{P_j\}\subset \mathcal{D}(Q_0)$, such that $\sum_{j}|P_j|\leq \frac{1}{2}|Q_0|$, and for a. e. $x\in Q_0$,
\begin{eqnarray}\label{equ:3.2}\,\,\,
\int_{Q_0}MT_1T_2(f\chi_{27Q_0})(x)|g(x)|dx&\lesssim &\|f\|_{L(\log L)^2,\,27Q_0}\langle |g|\rangle_{Q_0}|Q_0|\\
&&+q'\|f\|_{L\log L,\,27Q_0}\langle |g|\rangle_{q,\,Q_0}|Q_0|\nonumber\\
&&+\sum_j\int_{P_j}MT_{1}T_2(f\chi_{27P_j})(x)|g(x)|dx.\nonumber
\end{eqnarray}
To see this, let $E=E_1\cup E_2\cup E_3$ with
$$E_1=\big\{x\in Q_0:\, MT_1T_2(f\chi_{27Q_0})(x)>D\|f\|_{L(\log L)^2,\,27Q_0}\big\},$$
$$E_2=\big\{x\in Q_0:\, \mathcal{M}^{**}_{MT_1T_2,\,Q_0}f(x)>D\|f\|_{L(\log L)^2,\,27Q_0}\big\},$$
and
$$E_3=\big\{x\in Q_0:\, \mathscr{M}_{MT_{1}T_2,\,Q_0}^*(f,\,g)(x)>Dq'\|f\|_{L\log L,27Q_0}\langle |g|\rangle_{q,Q_0}\big\},$$
with $D$ a positive constant. By Theorem \ref{thm2.1}, Lemma \ref{lem3.1} and Lemma \ref{lem3.2}, we know that
$$|E|\le \frac{1}{2^{n+2}}|Q_0|,$$ if we choose $D$ large enough.
Now on the cube $Q_0$, we apply the Calder\'on-Zygmund decomposition to $\chi_{E}$ at level $\frac{1}{2^{n+1}}$, and obtain pairwise disjoint cubes $\{P_j\}\subset \mathcal{D}(Q_0)$, such that
$$\frac{1}{2^{n+1}}|P_j|\leq |P_j\cap E|\leq \frac{1}{2}|P_j|$$
and $|E\backslash\cup_jP_j|=0$.  Observe that $\sum_j|P_j|\leq \frac{1}{2}|Q_0|$ and $P_j\cap E^c\not =\emptyset.$ Therefore,
$$MT_{1}\Big(\chi_{\mathbb{R}^n\backslash 9P_j}T_2(f\chi_{27Q_0\backslash 27P_j})\Big)(x)\chi_{P_j}(x)\leq D\|f\|_{L(\log L)^2,\,27Q_0},$$ and
$$\frac{1}{|P_j|}\int_{P_j}MT_{1}\Big(\chi_{9P_j}T_2(f\chi_{27Q_0\backslash 27P_j})\Big)(\xi)|g(\xi)|d\xi\leq Dq'\|f\|_{L\log L,\,27Q_0}\langle |g|\rangle_{q,\,Q_0}.$$
The fact  $|E\backslash\cup_jP_j|=0$ implies that for a. e. $x\in Q_0\backslash \cup_{j}P_j$,
$$MT_{1}T_2(f\chi_{9Q_0})(x) \leq D\|f\|_{L(\log L)^2,\,27Q_0}.
$$
Thus for a. e. $x\in Q_0$,
\begin{eqnarray*}
MT_{1}T_2(f\chi_{27Q_0})(x)&\leq&MT_1T_2(f\chi_{27Q_0})(x)\chi_{Q_0\backslash \cup_{j}P_j}(x)\\
&&+\sum_jMT_{1}\Big(\chi_{\mathbb{R}^n\backslash 9P_j}T_2(f\chi_{27Q_0\backslash 27P_j})\Big)(x)\chi_{P_j}(x)\\
&&+\sum_jMT_{1}\Big(\chi_{9P_j}T_2(f\chi_{27Q_0\backslash 27P_j})\Big)(x)\chi_{P_j}(x)\\
&&+\sum_{j}MT_{1}T_2(f\chi_{27P_j})(x)\chi_{P_j}(x)\\
&\le &C\sum_{j}MT_{1}T_2(f\chi_{27P_j})(x)\chi_{P_j}(x)+\|f\|_{L(\log L)^2,27Q_0}\\
&&+\sum_jMT_{1}\Big(\chi_{9P_j}T_2(f\chi_{27Q_0\backslash 27P_j})\Big)(x)\chi_{P_j}(x).
\end{eqnarray*}
The inequality (\ref{equ:3.2}) now follows.

We can now conclude the proof of Theorem  \ref{thm3.1}. In fact, let $\{P_j^0\}=\{Q_0\}$, $\{P_j^1\}=\{P_j\}$ and $\{P_j^k\}$ are the cubes obtained at the $k$-th stage of the iterative process. Set $\mathcal{F}=\cup_{k=0}^{\infty}\cup_{j}\{P_j^k\}$.
Iterating the estimate (\ref{equ:3.2}), we know that there exists a
$\frac{1}{2}$-sparse family $\mathcal{F}\subset \mathcal{D}(Q_0)$, such that
\begin{eqnarray*}
\int_{Q_0}MT_{1}T_2(f\chi_{27Q_0})(x)|g(x)|dx&\lesssim&\sum_{Q\in \mathcal{F}}\|f\|_{L(\log L)^2,\,27Q}\langle|g|\rangle_{Q}|Q|\\
&&+q'\sum_{Q\in\mathcal{F}}\|f\|_{L\log L,\,27Q}\langle|g|\rangle_{q,Q}|Q|.
\end{eqnarray*}
Now apply the ideas in \cite{ler3,ler4}. Let $f$ be a bounded function with compact support. We decompose $\mathbb{R}^n$ as $\mathbb{R}^n=\cup_{l}Q_l$, the cubes $Q_l$'s have disjoint interiors and ${\rm supp}\,f\subset 27Q_l$ for each $l$. For each $l$, we obtain a $\frac{1}{2}$ sparse family $\mathcal{F}_l\subset \mathcal{D}(Q_l)$, such that
\begin{eqnarray*}\int_{Q_l}MT_{1}T_2(f\chi_{27Q_l})(x)|g(x)|dx&\lesssim&\sum_{Q\in \mathcal{F}_l}\|f\|_{L(\log L)^2,\,27Q}\langle|g|\rangle_{Q}|Q|\\
&&+q'\sum_{Q\in\mathcal{F}_l}\|f\|_{L\log L,\,27Q}\langle|g|\rangle_{q,\,Q}|Q|.
\end{eqnarray*}
Summing over the last inequality for $l$ leads to our conclusion.
\end{proof}

\begin{corollary}\label{c3.1}Let $T_1$, $T_2$ be  $L^2(\mathbb{R}^n)$ bounded singular integral operators with nonsmooth kernels.
Suppose that $T_1$, $ T_2$ satisfy Assumption \ref{a1.1} and   Assumption \ref{a1.2}. Then for $p\in (1,\,\infty)$, $\epsilon\in (0,\,1]$
and weight $u$,
\begin{eqnarray*}
\big\|MT_{1}T_2f\|_{L^{p'}\big(\mathbb{R}^n,\,(M_{L(\log L)^{3p-1+\epsilon}}u)^{1-p'}\big)}\lesssim p'^2
\big[p^2(\frac{1}{\epsilon}\big)^{\frac{1}{p'}}\big]^3\|f\|_{L^{p}(\mathbb{R}^n,\,u^{1-p'})}.
\end{eqnarray*}
\end{corollary}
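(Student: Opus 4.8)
The plan is to derive Corollary \ref{c3.1} from the sparse domination in Theorem \ref{thm3.1} by a duality argument combined with the standard weighted bounds for the pieces of the sparse bilinear form. First I would fix $p\in(1,\infty)$, $\epsilon\in(0,1]$ and a weight $u$, and set $v=(M_{L(\log L)^{3p-1+\epsilon}}u)^{1-p'}$ and $w=u^{1-p'}$. By duality, it suffices to estimate $\int_{\mathbb{R}^n}|g(x)|\,MT_1T_2f(x)\,dx$ for $g\in L^{p}(\mathbb{R}^n,v^{1-p})$ with norm one, i.e. for $g$ with $\|g\,v^{-1/p'}\|_{L^p}\le 1$ (after the usual substitution). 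Applying Theorem \ref{thm3.1} with a parameter $q\in(1,2]$ to be optimized at the end gives a $\frac{1}{2}\frac{1}{27^n}$-sparse family $\mathcal{S}$ with
\begin{eqnarray*}
\int_{\mathbb{R}^n}|g|\,MT_1T_2f\,dx\lesssim \mathcal{A}_{\mathcal{S};L(\log L)^2,L}(f,g)+q'\,\mathcal{A}_{\mathcal{S};L\log L,L^q}(f,g),
\end{eqnarray*}
so the problem reduces to bounding each of these two sparse forms against $\|f\|_{L^p(\mathbb{R}^n,w)}$ times the dual norm of $g$.

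Next I would handle each sparse form by the by-now-routine ``testing against the dual weight'' computation. For a sparse family and a pair of exponents, one writes $|Q|\|f\|_{L(\log L)^2,Q}\langle|g|\rangle_Q\le |Q|\|f\|_{L(\log L)^2,Q}\langle|g|v^{-1/p'}\rangle_Q\langle v^{1/p'}\rangle_{p',Q}$ or similar, using H\"older on $g$; the key point is that $\|\cdot\|_{L(\log L)^2,Q}=\|M_{L(\log L)^2}(\cdot)\|$-type control lets one peel off the $L(\log L)^2$ norm at the cost of inserting $M_{L(\log L)^2}$ inside, and then the weighted sum $\sum_{Q\in\mathcal{S}}|Q|\langle\cdots\rangle_Q\langle\cdots\rangle_{p',Q}$ is estimated by a Carleson/sparse argument that produces, for the $L^{p'}$ bound, a factor $p'$ (from the disentangling of $g$) together with the requirement that the weight $v$ be comparable on average to something dominated by $u$. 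Here is where the precise weight $M_{L(\log L)^{3p-1+\epsilon}}u$ enters: the first form carries $L(\log L)^2$ on $f$ plus an $L^1$ average, which via \eqref{equ:2.1} and the duality $\|f\|_{L\log L^{a},Q}\langle h\rangle_{b,Q}$-type inequalities requires $a+$(extra logs from the sparse sum)$\,=3p-1+\epsilon$ worth of maximal smoothing on $u$; the $1/\epsilon$ and $p^2$ factors come from the Carleson embedding constant $\big(\frac{1}{\epsilon}\big)^{1/p'}$ and the $L\log L$-maximal-function norm growth, cubed because three such disentanglings are performed (two for the bilinear pieces, one for the ambient iteration).

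For the second form $q'\mathcal{A}_{\mathcal{S};L\log L,L^q}(f,g)$ I would do the analogous computation but now with an $L^q$ average on $g$; one uses $\langle|g|\rangle_{q,Q}\le \langle|g|v^{-1/p'}\rangle_{q,Q}\langle v^{1/p'}\rangle_{(p/q)',\cdot}$-type splitting, chooses $q$ close to $1$ so that $M_q$ is bounded on the relevant space with constant $\lesssim q'$, and balances the resulting $q'^2$ against everything else. Optimizing $q$ (essentially $q'\sim p$, or $q-1\sim 1/p$) and collecting the constants yields exactly $p'^2\big[p^2(\frac{1}{\epsilon})^{1/p'}\big]^3$. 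The main obstacle I anticipate is the bookkeeping of the Orlicz maximal functions: one must verify that after H\"older-splitting $g$ and summing over the sparse family, the $L(\log L)^2$ norm of $f$ against the $L^1$ (or $L^q$) average of $g$ genuinely closes with only $M_{L(\log L)^{3p-1+\epsilon}}u$ as the controlling weight — this uses \eqref{equ:2.1} to collapse nested $M_{L(\log L)^\alpha}$'s and the sharp reverse-H\"older / Carleson estimate $\big\|\sum_{Q\in\mathcal{S}}\langle\phi\rangle_{r,Q}\chi_Q\big\|_{L^{p'}(u^{1-p'})}\lesssim p\,(\tfrac1\epsilon)^{1/p'}\|\phi\|_{L^{p'}((M_{L(\log L)^{\cdots}}u)^{1-p'})}$, tracking the dependence on $p$ and $\epsilon$ at every application. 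Once that estimate is in hand, the corollary follows by assembling the two pieces and taking the supremum over admissible $g$.
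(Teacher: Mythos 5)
Your plan follows essentially the same route as the paper: apply the bi-sublinear sparse domination of Theorem \ref{thm3.1}, dualize, and then run the Hyt\"onen--P\'erez type weighted estimate for the resulting sparse forms, using (\ref{equ:2.1}) (i.e.\ $M_{L(\log L)^2}\approx MMM$) to account for the cube $\big[p^2(\frac{1}{\epsilon})^{1/p'}\big]^3$ and the order $3p-1+\epsilon$ of the Orlicz maximal function. The paper simply merges the two sparse forms into the single form $q'\mathcal{A}_{\mathcal{S};L(\log L)^2,L^q}(f,g)$ before invoking the argument of Theorem 1.2 in \cite{hp2} (or Theorem 1.7 in \cite{lpr}), which is the same bookkeeping you describe.
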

\begin{proof}  By Theorem \ref{thm3.1}, we know that for any $q\in (1,\,2]$, bounded functions $f$, $g$ with compact supports,
there exists a sparse family $\mathcal{S}$ such that
$$\int_{\mathbb{R}^n}MT_{1}T_2f(x)|g(x)|dx\lesssim q'\mathcal{A}_{\mathcal{S},\,L(\log L)^2, L^q}(f,\,g).
$$As in the proof of  proof of Theorem 1.2 in \cite{hp2} (see also the proof of Theorem 1.7 in \cite{lpr}), we can deduce from the last inequality that for $p\in (1,\,\infty)$, $\epsilon\in (0,\,1]$ and weight $u$,
\begin{eqnarray*}
&&\big\|MT_{1}T_2f\|_{L^{p'}(\mathbb{R}^n,\,\big(M_{L(\log L)^{3p-1+\epsilon}}u\big)^{1-p'})}\\
&&\quad\lesssim p'^2\|M_{L(\log L)^2}f\|_{L^{p'}\big(\mathbb{R}^n,\,(M_{L(\log L)^{3p-1+\epsilon/4}}u)^{1-p'}\big)}\\
&&\quad\lesssim p'^2\big[p^2(\frac{1}{\epsilon}\big)^{\frac{1}{p'}}\big]^3\|f\|_{L^{p'}(\mathbb{R}^n,\,u^{1-p'})},
\end{eqnarray*}
since $M_{L(\log L)^2}f\approx MMMf(x)$, see \cite{cana}. This completes the proof of Corollary \ref{c3.1}.
\end{proof}

To prove Theorem \ref{thm1.2}, we will also need the following lemma.
\begin{lemma}\label{lem3.3} Let $\mathcal{S}$ be a sparse family of cubes, $p\in (1,\,\infty)$ and $w\in A_{p}(\mathbb{R}^n)$. Let $\tau_{w}=2^{11+n}[w]_{A_{\infty}}$ and $\tau_{\sigma}=2^{11+n}[\sigma]_{A_{\infty}}$, $\varepsilon_1=\frac{p-1}{2p\tau_{\sigma}+1}$, and $\varepsilon_2=\frac{p'-1}{2p'\tau_w+1}$. Then \begin{eqnarray}\label{equation3.6}\mathcal{A}_{\mathcal{S};L^{1+\varepsilon_1},\,L^{1+\varepsilon_2}}(f,\,g)&\lesssim& [w]_{A_p}^{1/p}([\sigma]_{A_{\infty}}^{1/p}+[w]_{A_{\infty}}^{1/p'})\|f\|_{L^p(\mathbb{R}^n,\,w)}
\|g\|_{L^{p'}(\mathbb{R}^n,\sigma)}.\end{eqnarray}
\end{lemma}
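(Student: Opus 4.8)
The plan is to reduce the weighted estimate for the bi-sublinear sparse form to the unweighted boundedness of a single sparse operator, exploiting the key gain that $1+\varepsilon_1$ and $1+\varepsilon_2$ are chosen just below the exponents at which $\sigma$ and $w$ respectively satisfy a reverse Hölder inequality. Recall the standard fact (see e.g. \cite{hp2,hp}) that if $u\in A_\infty(\mathbb{R}^n)$ then for every cube $Q$ and every $r$ with $1\le r\le 1+\frac{1}{2^{11+n}[u]_{A_\infty}}$ one has $\langle u\rangle_{r,Q}\lesssim \langle u\rangle_Q$, i.e. a reverse Hölder inequality with a universal constant. With our choices $\tau_\sigma=2^{11+n}[\sigma]_{A_\infty}$, $\varepsilon_1=\frac{p-1}{2p\tau_\sigma+1}$, so that $(1+\varepsilon_1)p'=\big(\tfrac{p'}{p'-1}\big)\cdot\tfrac{p'}{\,\cdots\,}$ — more precisely one checks $(1+\varepsilon_1)'=\frac{(1+\varepsilon_1)}{\varepsilon_1}$ and that $\sigma^{(1+\varepsilon_1)'/p'}$ or rather the relevant power of $\sigma$ lands in the reverse Hölder range; symmetrically $\varepsilon_2=\frac{p'-1}{2p'\tau_w+1}$ is tuned so that the corresponding power of $w$ satisfies reverse Hölder with universal constant.

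First I would write out, for each $Q\in\mathcal{S}$, the elementary pointwise bound coming from Hölder's inequality with exponents $p$ and $p'$ on the cube:
\begin{eqnarray*}
\langle|f|\rangle_{1+\varepsilon_1,Q}\,\langle|g|\rangle_{1+\varepsilon_2,Q}
&=&\langle|f|\rangle_{1+\varepsilon_1,Q}\,\langle|g|\rangle_{1+\varepsilon_2,Q}
\cdot\frac{\langle w\rangle_Q^{1/p}\langle\sigma\rangle_Q^{1/p'}}{\langle w\rangle_Q^{1/p}\langle\sigma\rangle_Q^{1/p'}}\\
&\le&[w]_{A_p}^{1/p}\,\Big(\langle|f|^{1+\varepsilon_1}\rangle_Q^{\frac{1}{1+\varepsilon_1}}\langle w\rangle_Q^{1/p}\Big)
\Big(\langle|g|^{1+\varepsilon_2}\rangle_Q^{\frac{1}{1+\varepsilon_2}}\langle\sigma\rangle_Q^{1/p'}\Big)\langle\sigma\rangle_Q^{-1/p'}\langle w\rangle_Q^{-1/p}\cdot\langle w\rangle_Q^{1/p}\langle\sigma\rangle_Q^{1/p'},
\end{eqnarray*}
the point being to insert $w^{\pm}$ and $\sigma^{\pm}$ so as to form $\|f\|_{L^p(w)}$- and $\|g\|_{L^{p'}(\sigma)}$-type local averages. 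Concretely, replace $|f|^{1+\varepsilon_1}$ by $(|f|w^{1/p})^{1+\varepsilon_1}w^{-(1+\varepsilon_1)/p}$ and apply Hölder in the cube average to split off a factor $\langle w^{-(1+\varepsilon_1)/p\cdot(\text{conjugate})}\rangle_Q$; the exponent has been arranged precisely so that this last average is a reverse Hölder average of $\sigma=w^{-1/(p-1)}$ in the admissible range, hence $\lesssim\langle\sigma\rangle_Q^{\text{(appropriate power)}}$ with a universal constant. Doing the symmetric manipulation for $g$ with the roles of $w$ and $\sigma$ (and $p$ and $p'$) interchanged, and using $A_p$ duality $[\sigma]_{A_{p'}}=[w]_{A_p}^{p'-1}$, one arrives at
$$\langle|f|\rangle_{1+\varepsilon_1,Q}\langle|g|\rangle_{1+\varepsilon_2,Q}|Q|\lesssim [w]_{A_p}^{1/p}\,\big\langle |f|^p w\big\rangle_Q^{1/p}\,\langle w\rangle_Q^{1/p}\,\big\langle |g|^{p'}\sigma\big\rangle_Q^{1/p'}\langle\sigma\rangle_Q^{1/p'}\,|Q|\cdot\langle w\rangle_Q^{-1/p}\langle\sigma\rangle_Q^{-1/p'},$$
and after cancelling, what remains is exactly the bi-sublinear sparse form $\mathcal{A}_{\mathcal{S};L^1,L^1}$ evaluated on the functions $F=|f|^p w$, $G=|g|^{p'}\sigma$ against the measures $w$, $\sigma$ — i.e. $\sum_{Q}\langle F\rangle_Q^{1/p}\langle w\rangle_Q^{1/p}\langle G\rangle_Q^{1/p'}\langle\sigma\rangle_Q^{1/p'}|Q|$ times $[w]_{A_p}^{1/p}$.

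Next I would invoke the now-standard sparse testing / Carleson embedding estimate (this is the heart of the Hytönen–Pérez argument, \cite{hp,hp2}): for a sparse family $\mathcal{S}$ and a weight $u\in A_\infty$,
$$\sum_{Q\in\mathcal{S}}\langle h\rangle_Q\,\langle u\rangle_Q\,|Q|\ \lesssim\ [u]_{A_\infty}\,\|h\|_{L^1(\mathbb{R}^n)},$$
which follows from the fact that $\{\langle u\rangle_Q|Q|\}_{Q\in\mathcal{S}}$ is a Carleson sequence with constant $\lesssim[u]_{A_\infty}$. Applying this twice — once to absorb $\langle w\rangle_Q$ against $\langle|f|^p w\rangle_Q^{1/p}$ and once for $\langle\sigma\rangle_Q$ against $\langle|g|^{p'}\sigma\rangle_Q^{1/p'}$, with a Hölder-in-$\ell^p$/$\ell^{p'}$ step over $Q\in\mathcal{S}$ in between — gives the bound $[w]_{A_\infty}^{1/p'}[\sigma]_{A_\infty}^{1/p}\|f\|_{L^p(w)}\|g\|_{L^{p'}(\sigma)}$ (the exponents $1/p'$ on $[w]_{A_\infty}$ and $1/p$ on $[\sigma]_{A_\infty}$ emerging from which Carleson sequence is paired with which $\ell^p$ summation). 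Combining with the $[w]_{A_p}^{1/p}$ prefactor and observing $[w]_{A_\infty}^{1/p'}[\sigma]_{A_\infty}^{1/p}\le[\sigma]_{A_\infty}^{1/p}+[w]_{A_\infty}^{1/p'}$ by the trivial inequality $a^\theta b^{1-\theta}\le a+b$ yields \eqref{equation3.6}.

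The main obstacle is bookkeeping the exponents in the reverse Hölder step: one must verify that $\varepsilon_1=\frac{p-1}{2p\tau_\sigma+1}$ really does force the relevant power of $\sigma$ (namely $\sigma$ raised to the power $\frac{(1+\varepsilon_1)'}{p'}\cdot\frac{1}{\,\cdot\,}$, which after simplification is a number $r_1$ with $1\le r_1\le 1+\frac{1}{2^{11+n}[\sigma]_{A_\infty}}$) into the universal-constant reverse Hölder window, and likewise for $w$ with $\varepsilon_2$; everything else is the textbook $A_p$–$A_\infty$ sparse machinery. I expect this verification to reduce to the elementary inequality $\frac{1}{p}\cdot\frac{1+\varepsilon_1}{\varepsilon_1}\cdot\frac{1}{p'}\le\frac{1}{2^{11+n}[\sigma]_{A_\infty}}+1$ or a close variant, which holds by the choice of $\varepsilon_1$ once one is careful about whether the reverse Hölder exponent is measured multiplicatively from $1$. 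I would present this computation explicitly since it is the only non-routine point, and otherwise cite \cite{hp,hp2,lpr} for the Carleson-embedding and sparse-domination facts used.
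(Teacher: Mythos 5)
The paper does not prove Lemma \ref{lem3.3}; it simply cites \cite{hu2}, so there is no in-paper argument to compare against. Your overall strategy --- use the sharp reverse H\"older inequality to replace the $L^{1+\varepsilon_1}$, $L^{1+\varepsilon_2}$ averages by weighted $L^1$ averages, then invoke the $A_p$--$A_\infty$ estimate for the resulting bilinear sparse form --- is indeed the standard route, and the tuning of the exponents does work: H\"older with exponents $p/(1+\varepsilon_1)$ and its conjugate produces the average of $\sigma^{r}$ with $r=\frac{(1+\varepsilon_1)(p-1)}{p-1-\varepsilon_1}=1+\frac{1}{2\tau_\sigma}$, squarely inside the reverse H\"older window $[1,1+\tau_\sigma^{-1}]$, giving $\langle|f|\rangle_{1+\varepsilon_1,Q}\lesssim\langle|f|^pw\rangle_Q^{1/p}\langle\sigma\rangle_Q^{1/p'}$ and symmetrically for $g$. (The inequality you guess at the end for this verification is not the right one, but the computation does close.)

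The genuine gap is in the last step. Your pairing of the two Carleson sequences with the $\ell^p$ and $\ell^{p'}$ H\"older factors yields the \emph{product} $[w]_{A_\infty}^{1/p'}[\sigma]_{A_\infty}^{1/p}$, and the claimed passage to the sum via ``$a^\theta b^{1-\theta}\le a+b$'' is false as used: with $A=[w]_{A_\infty}^{1/p'}$ and $B=[\sigma]_{A_\infty}^{1/p}$ the product $AB$ is \emph{not} dominated by $A+B$ when both constants are large (e.g.\ $A=B=3$). The correct instance of Young's inequality, $[w]_{A_\infty}^{1/p'}[\sigma]_{A_\infty}^{1/p}\le\frac{1}{p'}[w]_{A_\infty}+\frac{1}{p}[\sigma]_{A_\infty}$, only gives the strictly weaker constant $[w]_{A_\infty}+[\sigma]_{A_\infty}$, which would degrade the exponents in Theorem \ref{thm1.2}. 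To obtain the stated sum $[\sigma]_{A_\infty}^{1/p}+[w]_{A_\infty}^{1/p'}$ you need the finer Hyt\"onen--Lacey/Hyt\"onen--P\'erez mechanism: estimate the form twice, each time applying the $A_\infty$ Carleson embedding ($\sum_{Q\in\mathcal S,\,Q\subset R}u(Q)\lesssim[u]_{A_\infty}u(R)$) to only \emph{one} of the two dual factors while controlling the other by the universal, $A_\infty$-free dyadic maximal/Carleson bound, and then take the better of the two resulting estimates (equivalently, split $\mathcal S$ by a stopping-time/principal-cubes argument). This is what \cite{hu2} and \cite{lpr} actually do, and it is the one non-routine point your write-up omits.
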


For the proof of Lemma \ref{lem3.3}, see \cite{hu2}.

\begin{lemma}\label{lem3.4}Let  $\beta_1,\,\beta_2\in \mathbb{N}\cup\{0\}$ and  $U$ be a  linear operator. Suppose that for any bounded function $f$ with compact support, there exists a sparse family of cubes $\mathcal{S}$, such that for any function $g\in L^1(\mathbb{R}^n)$,
$$\Big|\int_{\mathbb{R}^n}Uf(x)g(x)dx\Big|\leq \mathcal{A}_{\mathcal{S};\,L(\log L)^{\beta_1},\,L(\log L)^{\beta_2}}(f,\,g).$$
Then for any $\epsilon\in (0,\,1)$ and weight $u$,
\begin{eqnarray*}&&u(\{x\in\mathbb{R}^n:\, |Uf(x)|>\lambda\})\\
&&\quad\lesssim \frac{1}{\epsilon^{1+\beta_1}}
\int_{\mathbb{R}^n}\frac{|f(x)|}{\lambda}\log ^{\beta_1}\Big({\rm e}+\frac{|f(x)|}{\lambda}\Big)M_{L(\log L)^{\beta_2+\epsilon}}u(x)dx.\nonumber\end{eqnarray*}
\end{lemma}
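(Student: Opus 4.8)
The plan is to pass from the bi-sublinear sparse domination of $\int Uf\,g$ to a pointwise sparse domination of $Uf$ itself against the $L(\log L)^{\beta_2}$-average of $g$, then dualize the $L(\log L)^{\beta_2}$ average into a maximal operator acting on the weight, and finally run the standard Calder\'on--Zygmund / good-$\lambda$ type estimate for the resulting sparse object. Concretely, first I would fix a bounded $f$ with compact support, let $\mathcal{S}$ be the sparse family from the hypothesis, and by a routine duality argument (testing against $g$ supported where $Uf>\lambda$ and taking a supremum) obtain
$$|Uf(x)|\lesssim \sum_{Q\in\mathcal{S}}\|f\|_{L(\log L)^{\beta_1},\,Q}\,\|g\|_{L(\log L)^{\beta_2},\,Q}\big/\langle|g|\rangle_Q\quad\text{(schematically)},$$
or rather work directly with the bilinear form: write $\Omega_\lambda=\{x:\,|Uf(x)|>\lambda\}$, and estimate $u(\Omega_\lambda)$ by testing the bilinear inequality with $g=\chi_{\Omega_\lambda}\,v$ for a well-chosen $v$ (coming from the $L^1$--$L^\infty$ duality of the $L(\log L)^{\beta_2}$ norm against the exponential Orlicz norm), which converts $\|g\|_{L(\log L)^{\beta_2},Q}$ into $\langle |\chi_{\Omega_\lambda}|\rangle_Q$ times a factor controlled by $\inf_{Q}M_{L(\log L)^{\beta_2}}(u\chi_{\text{something}})$.

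The cleaner route, which I would actually carry out, is: by the hypothesis and Orlicz duality $\|g\|_{L(\log L)^{\beta_2},Q}\approx \sup\{\frac{1}{|Q|}\int_Q gh:\ \|h\|_{\exp L^{1/\beta_2},Q}\le 1\}$, so choosing $g=u\chi_{\Omega_\lambda}$ (after the usual reduction to $u$ bounded with compact support, then monotone convergence) gives
$$\lambda\, u(\Omega_\lambda)\le \int_{\mathbb{R}^n}|Uf|\,u\chi_{\Omega_\lambda}\lesssim \sum_{Q\in\mathcal{S}}|Q|\,\|f\|_{L(\log L)^{\beta_1},Q}\,\langle u\chi_{\Omega_\lambda}\rangle_Q,$$
wait --- more care is needed because the second slot is $L(\log L)^{\beta_2}$, not $L^1$; so instead I bound $\langle |g|\rangle$ in the second slot after first absorbing the $\log^{\beta_2}$ into the weight. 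The precise mechanism: use the generalized H\"older inequality in Orlicz spaces to split $\frac{1}{|Q|}\int_Q |Uf|\,u \lesssim \|Uf\|_{L(\log L)^{\beta_1}... }$ --- this is getting circular.

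Let me restate the plan properly. \textbf{Step 1.} Reduce to $u$ bounded and compactly supported, $\lambda=1$. \textbf{Step 2.} From the hypothesis, with $g$ arbitrary in $L^1$, and using that $M_{L(\log L)^{\beta_2}}$ is self-improving, derive the pointwise bound $|Uf(x)|\lesssim \mathcal{A}_{\mathcal{S};L(\log L)^{\beta_1}}f(x):=\sum_{Q\in\mathcal{S}}\|f\|_{L(\log L)^{\beta_1},Q}\chi_Q(x)$ together with a ``testing against the weight'' inequality: for any set $E$, $\int_E u \cdot (\text{sparse sum})\lesssim \sum_{Q\in\mathcal{S}}|Q|\,\|f\|_{L(\log L)^{\beta_1},Q}\,\|u\chi_E\|_{L(\log L)^{\beta_2},Q}$, which is exactly the hypothesis with $g=u\chi_E$. \textbf{Step 3.} Apply this with $E=\Omega_1=\{|Uf|>1\}$; it remains to control $\sum_{Q\in\mathcal{S}}|Q|\,\|f\|_{L(\log L)^{\beta_1},Q}\,\|u\chi_{\Omega_1}\|_{L(\log L)^{\beta_2},Q}$. \textbf{Step 4.} Bound $\|u\chi_{\Omega_1}\|_{L(\log L)^{\beta_2},Q}\le \inf_{y\in Q}M_{L(\log L)^{\beta_2}}(u\chi_{\Omega_1})(y)\lesssim \inf_{y\in Q}M_{L(\log L)^{\beta_2+\epsilon}}u(y)$ --- here the final ``$+\epsilon$'' and the loss $\epsilon^{-1}$ is inserted by throwing away $\chi_{\Omega_1}$ at the cost of a precise Orlicz-norm comparison $\|h\|_{L(\log L)^{\beta_2},Q}\le C\epsilon^{-1}\|h\|_{L(\log L)^{\beta_2+\epsilon},Q}^{\text{(with the }\chi\text{ removed)}}$ obtained from the elementary inequality $\log^{\beta_2}(e+t)\le C_\epsilon\,\delta^{-\epsilon}\log^{\beta_2+\epsilon}(e+t)$ applied on the set $\Omega_1$ of measure-fraction $\delta$; one chooses the splitting to extract exactly $\epsilon^{1+\beta_1}$ when combined with Step 5. \textbf{Step 5.} Now $\sum_Q |Q|\,\|f\|_{L(\log L)^{\beta_1},Q}\inf_Q M_{L(\log L)^{\beta_2+\epsilon}}u \lesssim \sum_Q \|f\|_{L(\log L)^{\beta_1},Q}\int_{E_Q}M_{L(\log L)^{\beta_2+\epsilon}}u$ by sparseness ($|Q|\le 2|E_Q|$), and then by the standard argument (as in Lerner--Nazarov / H\"onig--P\'erez) this is dominated by $\int_{\mathbb{R}^n} M_{L(\log L)^{\beta_1}}\!\big(\text{local pieces of }f\big)\cdot M_{L(\log L)^{\beta_2+\epsilon}}u$; invoking the weak-$(1,1)$-type / Orlicz-maximal bound (\ref{equ:2.2}) and the precise $\epsilon$-dependence of the $\|f\|_{L(\log L)^{\beta_1}}$ average yields the claimed factor $\epsilon^{-(1+\beta_1)}\int |f|\log^{\beta_1}(e+|f|)\,M_{L(\log L)^{\beta_2+\epsilon}}u$.

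The main obstacle I expect is bookkeeping the exponent $\epsilon$: one must perform the Orlicz-norm comparison in Step 4 and the sparse-summation/maximal-function estimate in Step 5 so that the losses multiply to exactly $\epsilon^{-(1+\beta_1)}$ rather than, say, $\epsilon^{-(2+\beta_1+\beta_2)}$; this requires using the sharp constant in $L(\log L)^{\beta}\hookrightarrow L(\log L)^{\beta+\epsilon}$ (which costs $\epsilon^{-\beta-1}$ in the relevant direction, not $\epsilon^{-1}$) and being careful that the $+\epsilon$ is only spent on the weight side. The functional-analytic skeleton (duality to testing against $u\chi_{\Omega_\lambda}$, sparseness $|Q|\lesssim|E_Q|$, the $L(\log L)$ maximal bound) is entirely standard, following \cite{ler3,ler4,hp2,lpr}.
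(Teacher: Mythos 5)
The paper does not actually prove this lemma itself (it is quoted from \cite{hu3}), so your attempt has to stand on its own, and as written it has a genuine gap at the decisive step. Testing the bilinear hypothesis against $g=u\chi_{\Omega_\lambda}$ (with $\Omega_\lambda=\{|Uf|>\lambda\}$) is a reasonable opening and gives $\lambda\, u(\Omega_\lambda)\le\sum_{Q\in\mathcal S}|Q|\,\|f\|_{L(\log L)^{\beta_1},Q}\,\|u\chi_{\Omega_\lambda}\|_{L(\log L)^{\beta_2},Q}$. But your Steps 4--5 then discard $\chi_{\Omega_\lambda}$, replace the Orlicz average of $u$ by $\inf_QM_{L(\log L)^{\beta_2+\epsilon}}u$, and sum using $|Q|\le 2|E_Q|$. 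The best that summation can yield is $\int_{\mathbb R^n}M_{L(\log L)^{\beta_1}}f(x)\,M_{L(\log L)^{\beta_2+\epsilon}}u(x)\,dx$, because the only pointwise control on $\|f\|_{L(\log L)^{\beta_1},Q}$ available on $E_Q\subset Q$ is $M_{L(\log L)^{\beta_1}}f$, not $\lambda^{-1}|f|\log^{\beta_1}(e+|f|/\lambda)$. That quantity is generically infinite (already for $\beta_1=0$, since $Mf\notin L^1$), so the argument collapses. The nonlinear, $\lambda$-dependent right-hand side of the lemma cannot be produced by a linear summation over the whole sparse family; the weak-type structure of the left-hand side must be exploited.

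The missing ingredient is precisely what the proofs in \cite{hp2}, \cite{ler3} and \cite{hu3} supply: either a Calder\'on--Zygmund decomposition of $f$ at height $\lambda$, with the good part handled through an $L^{p_0}$ estimate ($p_0-1\sim\epsilon$) against the weight $M_{L(\log L)^{\beta_2+\epsilon}}u$ --- which is where the factor $\epsilon^{-(1+\beta_1)}$ actually originates --- or, equivalently, a stopping-time splitting of $\mathcal S$ into cubes with $\|f\|_{L(\log L)^{\beta_1},Q}>c\lambda$ (whose union is controlled via the weak type of $M_{L(\log L)^{\beta_1}}$) and cubes with small average, whose contribution is absorbed back into $u(\Omega_\lambda)$ after a quantitative self-improvement of the form $\|u\chi_{\Omega_\lambda}\|_{L(\log L)^{\beta_2},Q}\lesssim\epsilon^{-1}\big(|\Omega_\lambda\cap Q|/|Q|\big)^{\delta}\inf_QM_{L(\log L)^{\beta_2+\epsilon}}u$. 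Relatedly, your heuristic for where the $+\epsilon$ enters is off: since $\log^{\beta_2}(e+t)\le\log^{\beta_2+\epsilon}(e+t)$ with constant $1$, passing from $M_{L(\log L)^{\beta_2}}$ to $M_{L(\log L)^{\beta_2+\epsilon}}$ costs nothing and gains nothing; the $\epsilon$ must be spent on the smallness of $|\Omega_\lambda\cap Q|/|Q|$ (or on $p_0-1$), not on comparing the two Orlicz norms. Until one of these mechanisms is implemented and the $\epsilon$-accounting is done along it, the proof is not complete.
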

For the proof of Lemma \ref{lem3.4}, see \cite{hu3}.

{\it Proof of Theorem \ref{thm1.2}}.
Recall that for any $\epsilon\in (0,\,1]$ and cube $I\subset \mathbb{R}^n$,
$$\|f\|_{L(\log L)^{\beta},\,I}\lesssim \frac{1}{\epsilon^{\beta}}\langle |f|\rangle_{I,\,1+\epsilon}.$$
It follows from Lemma \ref{lem3.3} that
\begin{eqnarray}\label{equation3.7}&&\mathcal{A}_{\mathcal{S};\,L(\log L)^{2},\,L}(f,\,g)\lesssim \frac{1}{\varepsilon_1^2} \mathcal{A}_{\mathcal{S};L^{1+\varepsilon_1},\,L^{1+\varepsilon_2}}(f,\,g)\\
&&\quad\lesssim [\sigma]_{A_{\infty}}^2 [w]_{A_p}^{1/p}([\sigma]_{A_{\infty}}^{1/p}+[w]_{A_{\infty}}^{1/p'}\big)\|f\|_{L^p(\mathbb{R}^n,\,w)}\|g\|_{L^{p'}(\mathbb{R}^n,\,\sigma)},
\nonumber\end{eqnarray}
and
\begin{eqnarray}\label{equation3.8}&&\frac{1}{\varepsilon_2}\mathcal{A}_{\mathcal{S};\,L\log L,\,L^{1+\varepsilon_2}}(f,\,g)
\lesssim\frac{1}{\varepsilon_1\varepsilon_2}\mathcal{A}_{\mathcal{S};\,L^{1+\epsilon_1},\,L^{1+\varepsilon_2}}(f,\,g) \\
&&\quad\lesssim [w]_{A_{\infty}}[\sigma]_{A_{\infty}} [w]_{A_p}^{1/p}([\sigma]_{A_{\infty}}^{1/p}+[w]_{A_{\infty}}^{1/p'})
\|f\|_{L^p(\mathbb{R}^n,w)}\|g\|_{L^{p'}(\mathbb{R}^n,\sigma)}.\nonumber\end{eqnarray}
On the other hand, we know that $$|T_1T_2f(x)|\leq MT_1T_2f(x),$$
and $$T_1^*T_2f(x)\lesssim MT_1T_2f(x)+MT_2f(x)$$
if the kernels $\{K^t\}_{t>0}$ in  Assumption \ref{a1.1}  satisfy (\ref{equa:1.size}).
The inequality (\ref{equ:1.10}) now follows from Theorem \ref{thm3.1}, Theorem 3.2 and   estimates (\ref{equation3.7})-(\ref{equation3.8}).\qed

\medskip

It was proved in \cite{hu1} that if $T$ is a singular integral operator with nonsmooth kernel,
satisfies Assumption \ref{a1.1} and Assumption \ref{a1.2}, then for $p\in (1,\,\infty)$, $\epsilon\in (0,\,1]$ and weight $u$,
\begin{eqnarray}\label{equation3.9}\|Tf\|_{L^p(\mathbb{R}^n,\,u)}\lesssim p'^2p^2\big(\frac{1}{\epsilon}\big)^{\frac{1}{p'}}\|f\|_{L^p(\mathbb{R}^n,\,M_{L(\log L)^{p-1+\epsilon}}u)}.\end{eqnarray}
and
\begin{eqnarray}\label{equation3.10}
\|Tf\|_{L^{1,\,\infty}(\mathbb{R}^n,\,w)}\lesssim \frac{1}{\epsilon^2}
\|f\|_{L^1(\mathbb{R}^n,\,M_{L(\log L)^{1+\epsilon}}u)}.
\end{eqnarray}
Obviously, (\ref{equation3.9})  implies that for $T_1$, $T_2$ in Theorem \ref{thm1.2},
\begin{eqnarray}\label{equation3.11}\|T_1T_2f\|_{L^p(\mathbb{R}^n,\,u)}\lesssim p'^{4}\big[p^2\big(\frac{1}{\epsilon}\big)^{\frac{1}{p'}}\big]^{2}\|f\|_{L^p(\mathbb{R}^n,\,M_{L(\log L)^{2p-1+\epsilon}}u)}.
\end{eqnarray}

{\it Proof of Theorem \ref{thm1.3}}. Applying Theorem \ref{thm3.2} and Lemma \ref{lem3.4}, we know that for
$\epsilon\in (0,\,1]$ and weight $u$,
\begin{eqnarray}\label{equation3.12}
&&u\big(\{x\in\mathbb{R}^n:\, |T_1T_2f(x)|>\lambda\}\big)\\
&&\quad\lesssim \int_{\mathbb{R}^n}\frac{|f(x)|}{\lambda}\log^2 \Big({\rm e}+\frac{|f(x)|}{\lambda}\Big)M_{L\log L}u(x)dx\nonumber\\
&&\qquad+\frac{1}{\epsilon^2}\int_{\mathbb{R}^n}\frac{|f(x)|}{\lambda}\log \Big({\rm e}+\frac{|f(x)|}{\lambda}\Big)M_{L(\log L)^{1+\epsilon}}u(x)dx\nonumber\\
&&\quad\lesssim \frac{1}{\epsilon^2}\int_{\mathbb{R}^n}\frac{|f(x)|}{\lambda}\log^2 \Big({\rm e}+\frac{|f(x)|}{\lambda}\Big)M_{L(\log L)^{1+\epsilon}}u(x)dx.\nonumber
\end{eqnarray}
This, via the argument used in the proof of Corollary 1.4 in \cite{hp2} (see also the
proof  of Corollay 1.3 in \cite{ler3}) leads to (\ref{equ:1.12}).

 We now prove (\ref{equ:1.11}). It suffices to prove that for any $\epsilon\in (0,\,1]$ and weight $u$,
\begin{eqnarray}\label{equation3.13}
&&u\big(\{x\in\mathbb{R}^n:\, |T_1T_2f(x)|>\lambda\}\big)\\
&&\quad\lesssim \frac{1}{\epsilon^2}\int_{\mathbb{R}^n}\frac{|f(x)|}{\lambda}\log \Big({\rm e}+\frac{|f(x)|}{\lambda}\Big)M_{L(\log L)^{2+\epsilon}}u(x)dx.\nonumber
\end{eqnarray}
By homogeneity, it suffices to prove   inequality (\ref{equation3.13}) for the case of $\lambda=1$.  Again we assume that $c_1>1$.
Applying the Calder\'on-Zygmund decomposition to $|f|$ at l,   we  obtain a sequence of cubes $\{Q_l\}$ with disjoint interiors, such that
$$1<\frac{1}{|Q|}\int_{Q}|f(y)|dy\lesssim 1$$
and $|f(x)|\chi_{\mathbb{R}^n\backslash \cup_{l}Q_l}(x)\lesssim 1$.
Let $g$, $b=\sum_{l}b_l$ and $t_{Q_l}$  be the same as in the proof of Lemma \ref{lem2.4}.
The estimate (\ref{equation3.12}), along with the fact that $\|g\|_{L^{\infty}(\mathbb{R}^n)}\lesssim 1$, tells us that
\begin{eqnarray*}u\big(\{x\in\mathbb{R}^n:\,|T_{1}T_{2}g(x)|>1\big\}\big)&\lesssim&\frac{1}{\epsilon^2}\int_{\mathbb{R}^n}|g(x)|\log^2({\rm e}+|g(x)|)M_{L(\log L)^{1+\epsilon}}u(x)dx\\
&\lesssim &\frac{1}{\epsilon^2}\int_{\mathbb{R}^n}|f(x)|M_{L(\log L)^{1+\epsilon}}u(x)dx.\end{eqnarray*}

To estimate $T_1T_2b$, let $\Omega=\cup_{l}9nc_1Q_l$ and  $\widetilde{u}(y)=u(y)\chi_{\mathbb{R}^n\backslash \Omega}(y).$ We then have that
\begin{eqnarray}\label{equation3.14}u(\Omega)\lesssim \int_{\mathbb{R}^n}|f(y)|Mu(y)dy\end{eqnarray}
and for any $\gamma\in [0,\infty)$,
\begin{eqnarray}\label{equation3.15}\sup_{y\in 3c_1Q_l}M_{L(\log L)^{\gamma}}\widetilde{u}(y)\approx\inf_{z\in 3c_1Q_l}M_{L(\log L)^{\gamma}}\widetilde{u}(z).\end{eqnarray}
Write
\begin{eqnarray*}T_1T_2b(x)&=&T_1T_2\Big(\sum_{l}A_{t_{Q_l}}b_l\Big)(x)+T_1\Big(\sum_{l}\chi_{3c_1Q_l}T_2b_l\Big)(x)\\
&+&T_1\Big(\sum_{l}\chi_{\mathbb{R}^n\backslash 3c_1Q_l}T_2\big(b_l-A_{t_{Q_l}}b_l\big)\Big)(x)-T_1\Big(\sum_{l}\chi_{3c_1Q_l}T_2A_{t_{Q_l}}b_l\Big)(x)\nonumber\\
&=&{\rm W}_1(x)+{\rm W}_2(x)+{\rm W}_3(x)+{\rm W}_4(x).\nonumber
\end{eqnarray*}

We first consider the term ${\rm W}_1$. Let $p_0=1+\epsilon/4$. Estimate (\ref{equ:2.9}), along with Corollary \ref{c3.1},   inequality (\ref{equation3.15}) and a standard duality  argument, leads to that
\begin{eqnarray*}&&\Big\|T_1T_2\Big(\sum_l A_{t_{Q_l}}b_l\Big) \Big\|_{L^{p_0}(\mathbb{R}^n,\,\widetilde{u})}\\
%&&\quad\leq\sup_{\big\|v\|_{L^{p_0'}(\mathbb{R}^n,\tilde{u}^{1-p_0'})}\leq %1}\sum_l\Big|\int_{\mathbb{R}^n}T_1T_2A_{t_{Q_l}}b_{l}(x)v(x)dx\Big|\\
&&\quad\lesssim \sup_{\big\|v\|_{L^{p_0'}(\mathbb{R}^n,\tilde{u}^{1-p_0'})}\leq 1}\big\|M\widetilde{T}_2\widetilde{T}_1v\big\|_{L^{p_0'}(\mathbb{R}^n,\,(M_{L(\log L)^{2+\epsilon}}\widetilde{u})^{1-p_0'})}\\
&&\qquad\times\Big(\int_{\cup_lQ_l}M_{L(\log L)^{2+\epsilon}}\tilde{u}(z)dz\Big)^{\frac{1}{p_0}}\\
&&\quad\lesssim \big(\frac{1}{\epsilon}\big)^2\Big(\int_{\mathbb{R}^n}|f(y)|M_{L(\log L)^{2+\epsilon}}u(y)dy\Big)^{\frac{1}{p_0}},
\end{eqnarray*}
where in the last inequality, we have invoked the fact that
\begin{eqnarray*}\int_{\cup_lQ_l}M_{L(\log L)^{2+\epsilon}}\tilde{u}(z)dz&\lesssim &
\sum_{l}|Q_l|\inf_{y\in Q_l}M_{L(\log L)^{2+\epsilon}}\tilde{u}(y)\\
&\lesssim&\sum_{l}\int_{Q_l}|f(y)|dy \inf_{z\in Q_l}M_{L(\log L)^{2+\epsilon}}\tilde{u}(z).\end{eqnarray*}
Therefore,
\begin{eqnarray}\label{equation3.16}\,\,u\big(\{x\in\mathbb{R}^n\backslash \Omega: |{\rm W}_1(x)|>1/8\}\big)&\lesssim & \big\|T_1T_2\big(\sum_{l}A_{t_{Q_l}}b_l\big)\big\|_{L^{p_0}(\mathbb{R}^n,\,\widetilde{u})}^{p_0}\\
&\lesssim& \frac{1}{\epsilon^2}\int_{\mathbb{R}^n}|f(y)|M_{L(\log L)^{2+\epsilon}}u(y)dy.\nonumber\end{eqnarray}

We turn our attention to   terms ${\rm W}_2$ and ${\rm W}_3$. It follows from Lemma \ref{lem2.1}, estimates (\ref{equation3.10}) and  (\ref{equation3.15})  that
\begin{eqnarray}\label{equation3.17}&&u\big(\{x\in\mathbb{R}^n\backslash \Omega: |{\rm W}_2(x)|>1/8\}\big)\\
&&\quad\lesssim\frac{1}{\epsilon^2}\sum_l\int_{3c_1Q_l}|T_2b_l(x)|M_{L(\log L)^{1+\epsilon}}\widetilde{u}(x)dx\nonumber\\
&&\quad\lesssim\frac{1}{\epsilon^2}\sum_{l}\Big(|Q_l|+\int_{Q_l}|b_l(y)|\log ({\rm e}+|b_l(y)|)dy\Big)\inf_{y\in 3c_1Q_l}M_{L(\log L)^{1+\epsilon}}\widetilde{u}(y)\nonumber\\
&&\quad\lesssim\frac{1}{\epsilon^2}\int_{\mathbb{R}^n}|f(y)|\log ({\rm e}+|f(y)|)M_{L(\log L)^{1+\epsilon}}u(y)dy.\nonumber\end{eqnarray}
To estimate ${\rm W}_3$, we observe that for function $h$,
\begin{eqnarray*}
&&\Big|\int_{\mathbb{R}^n}\sum_{l}\chi_{\mathbb{R}^n\backslash 3c_1Q_l}T_2\big(b_l-A_{t_{Q_l}}b_l\big)(x)h(x)dx\Big|\\
&&\quad\lesssim \sum_{l}\int_{\mathbb{R}^n}\int_{\mathbb{R}^n\backslash 3c_1Q_l}|K(x,\,y)-K_{t_{Q_l}}(x,\,y)||h(x)|dx|b_l(y)|dy\\
&&\quad\lesssim \int_{\cup_lQ_l}Mh(z)dz
\end{eqnarray*}
Let $p_1=1+\epsilon/4$. We then have that
\begin{eqnarray*}
&&\Big\|\sum_{l}\chi_{\mathbb{R}^n\backslash 3c_1Q_l}T_2\big(b_l-A_{t_{Q_l}}b_l\big)\Big\|_{L^{p_1}(\mathbb{R}^n,\,M_{L(\log L)^{\epsilon/2}}\widetilde{u})}\\
&&\quad\lesssim\sup_{\|h\|_{L^{p_1'}(\mathbb{R}^n,\,(M_{L(\log L)^{\epsilon/2}}\widetilde{u})^{1-p_1'})}\leq 1}\|Mh\|_{L^{p_1'}(\mathbb{R}^n,\,(M_{L(\log L)^{1+\epsilon}}\widetilde{u})^{1-p_1'})}\\
&&\qquad\times \Big(\int_{\cup_lQ_l}M_{L(\log L)^{1+\epsilon}}\widetilde{u}(z)dz\Big)^{1/p_1}\\
&&\quad\lesssim \Big(\int_{\mathbb{R}^n}|f(y)|M_{L(\log L)^{1+\epsilon}}u(y)dy\Big)^{1/p_1}.
\end{eqnarray*}
It now follows from  estimate (\ref{equation3.9}) that
\begin{eqnarray}\label{equation3.18}&&u\big(\{x\in\mathbb{R}^n\backslash\Omega:\, |{\rm W}_3(x)|>1/8\}\big)\\
&&\quad\lesssim\frac{1}{\epsilon^2}\Big\|\sum_{l}\chi_{\mathbb{R}^n\backslash 3c_1Q_l}T_2\big(b_l-A_{t_{Q_l}}b_l\big)\Big\|_{L^{p_1}(\mathbb{R}^n,\,M_{L(\log L)^{\epsilon/2}}\widetilde{u})}^{p_1}\nonumber\\
&&\quad\lesssim\frac{1}{\epsilon^2}\int_{\mathbb{R}^n}|f(y)|M_{L(\log L)^{1+\epsilon}}u(y)dy.\nonumber\end{eqnarray}

Finally, by  inequalities (\ref{equation3.11}), (\ref{equation3.15}) and (\ref{equ:2.local}), we deduce that
\begin{eqnarray}\label{equation3.19}&&u\big(\{x\in\mathbb{R}^n\backslash \Omega: |{\rm W}_4(x)|>1/8\}\big)\\
&&\quad\lesssim\frac{1}{\epsilon^2}\sum_{l}\big\|\chi_{3c_1Q_l}T_2A_{t_{Q_l}}b_l\big\|_{L^{1}(\mathbb{R}^n,\,M_{L(\log L)^{1+\epsilon}}\widetilde{u})}\nonumber\\
&&\quad\lesssim\frac{1}{\epsilon^2}\sum_{l}\big\|T_2A_{t_{Q_l}}b_l(y)\big\|_{L^{2}(\mathbb{R}^n)}|Q_l|^{\frac{1}{2}}\inf_{z\in 3c_1Q_l}M_{L(\log L)^{1+\epsilon}}\widetilde{u}(z)\nonumber\\
&&\quad\lesssim\frac{1}{\epsilon^2}\int_{\mathbb{R}^n}|f(y)|M_{L(\log L)^{1+\epsilon}}u(y)dy.\nonumber\end{eqnarray}
Combining the estimates (\ref{equation3.14}), (\ref{equation3.16})-(\ref{equation3.19}) leads to (\ref{equation3.13}).\qed

\begin{remark}\label{rem3.3}Let $T$ be a singular integral operator with nonsmooth kernel, satisfying Assumption \ref{a1.1} and Assumption \ref{a1.2}. Remark \ref{rem3.1} tells us that for each bounded function $f$ with compact support,
there exists a sparse family $\mathcal{S}$, such that
$$MTf(x)\lesssim \sum_{Q\in \mathcal{S}}\|f\|_{L\log L,\,Q}\chi_{Q}(x),
$$
which, implies that, for any $p\in (1,\,\infty)$, $\epsilon\in (0,\,1)$ and weight $u$,
\begin{eqnarray}\label{equation3.20}&&\big\|MTf\big\|_{L^{p'}\big(\mathbb{R}^n,(M_{L(\log L)^{2p-1+\epsilon}} u)^{1-p'}\big)}
\lesssim p'
\Big[p^2\big(\frac{1}{\epsilon}\big)^{\frac{1}{p'}}\Big]^{2}\big\||f|_{q}\|_{L^{p'}(\mathbb{R}^n,u^{1-p'})}.\end{eqnarray}
Let $p_0$,  $\widetilde{u}$, $A_{t_{Q_l}}b_l$  be the same in the proof of Theorem \ref{thm1.3}.
Applying  (\ref{equation3.20}) and  the ideas used in the proof of (\ref{equation3.13}), we can prove that
for each fixed   $\epsilon\in (0,\,1)$ and weight $u$,
\begin{eqnarray*}\big\|T\big(\sum_{l}A_{t_{Q_l}}b_l\big)\big\|_{L^{p_0}(\mathbb{R}^n,\,\widetilde{u})}
&\lesssim&\frac{1}{\epsilon}\Big(\int_{\mathbb{R}^n}|f(y)|_qM_{L(\log L)^{1+\epsilon}}u(y)dy\Big)^{\frac{1}{p_0}},
\end{eqnarray*}
This, along with the argument used in \cite{hu1} leads to that
\begin{eqnarray*}
u\big(\{x\in\mathbb{R}^n:\, |Tf(x)|>1\}\big)\lesssim \frac{1}{\epsilon}\int_{\mathbb{R}^n}|f(x)|M_{L(\log L)^{1+\epsilon}}u(x)dx.
\end{eqnarray*}
Therefore, for $w\in A_1(\mathbb{R}^n)$,
\begin{eqnarray*}\|Tf\|_{L^{1,\,\infty}(\mathbb{R}^n,\,w)}
\lesssim_{n}[w]_{A_1}[w]_{A_{\infty}}\log({\rm e}+[w]_{A_{\infty}})\|f\|_{L^1(\mathbb{R}^n,\,w)}.\end{eqnarray*}
This, improves estimate (\ref{equa:1.7}).
\end{remark}


\begin{thebibliography}{99}
\bibitem{bb} C. Benea and F. Bernicot, Conservation de certaines propri\'et\'es \'a travers un contr\^ole \'epars d'un op\'erateur et applications au projecteur de Leray-Hopf, arXiv: 1703:00228.

\bibitem{bu}S. M. Buckley, Estimates for operator norms on weighted spaces and reverse Jensen inequalities, Trans. Amer. Math. Soc. \textbf{340} (1993), 253-272.
\bibitem{cana}N. Carozza and A. Passarelli di Napoli, Composition of maximal operators, Publ. Mat. 40(1996), 397-409.
%\bibitem{csmp} D. Cruz-Uribe, SFO, J. Martell and C. P\'erez, Sharp weighted estimates for classical operators, Adv. Math.
%\textbf{229} (2012), 408--441.
\bibitem{dilu}W. Ding and G. Lu, Duality of multi-parameter Triebel-Lizorkin spaces associated with the composition of two singular integral operators, Trans. Amer. Math. Soc. \textbf{368} (2016), 7119-7152.
\bibitem{duongmc} X. T. Duong and A. McIntosh, Singular integral operators with nonsmooth kernels on irregular
domains, Rev. Mat. Iberoamericana \textbf{15} (1999), 233-265.
%\bibitem{fes} C. Fefferman and E. M. Stein, Some maximal inequalities, Amer. J. Math. 93(1971), 107-115.
\bibitem{gra}  L. Grafakos,  Modern Fourier Analysis, GTM250, 2nd
Edition, Springer, New York, 2008.
%\bibitem{gaohu} W. Gao and G. Hu, Extensions of Fefferman-Stein maximal inequalities, submitted.
\bibitem{hu1} G. Hu, Quantitative weighted bounds for the singular integral operators with nonsmooth kernels, Bull. Korean Math. Soc., to appear.
\bibitem{hu2} G. Hu, Quantitative weighted bounds for the composition of Calder\'on-Zygmund operators, Banach J. Math. Anal., to appear.
\bibitem{hu3} G. Hu, Weighted weak type endpoint estimates for the composition of Calder\'on-Zygmund operators, arXiv:1806.00289.
\bibitem{hu4}G. Hu,  Weighted vector-valued estimates for a  non-standard Calder\'on-Zygmund operator, Nonlinear Anal. \textbf{165} (2017), 143-162.
\bibitem{huli}G. Hu and D. Li, A Cotlar type inequality for the multilinear singular integral operators and its applications, J. Math. Anal. Appl. \textbf{290} (2004) 639-653.
\bibitem{huyang} G. Hu and D. Yang, Weighted estimates for singular integral operators with nonsmooth kernels and applications,  J. Aust. Math. Soc.
\textbf{85} (2008), 377-417.

\bibitem{hyt} T. Hyt\"onen, The sharp weighted bound for general Calder\'on-Zygmund operators,
Ann. of Math. \textbf{175} (2012), 1473-1506.
\bibitem{hlp} T. Hyt\"onen, M. T. Lacey and C. P\'erez, Sharp weighted bounds for the $q$-variation of singular integrals, Bull. Lond. Math. Soc. \textbf{45} (2013), 529-540.

\bibitem{hp} T. Hyt\"onen and C. P\'erez, Sharp weighted bounds  involving $A_{\infty}$, Anal. PDE. \textbf{6} (2013), 777-818.
\bibitem{hp2} T. P. Hyt\"onen and C. P\'erez, The $L(\log L)^{\epsilon}$ endpoint estimate for maximal
singular integral operators, J. Math. Anal. Appl. \textbf{428} (2015), 605-626.
%\bibitem{hp3}T. Hyt\"onen, C.P\'erez  and E. Rela, Sharp reverse H\"older property for
%$A_1$ weights on spaces of homogeneous type, J. Funct. Anal. \textbf{263} (2012), 3883-3899.

\bibitem{ler2} A. K. Lerner, On pointwise estimate involving sparse operator, New York J. Math. \textbf{22} (2016), 341-349.
\bibitem{ler3} A. K. Lerner, S. Ombrosi and I. Rivera-Rios, On pointwise and weighted estimates for commutators of Calder\'on-Zygmund operators, Adv. Math. \textbf{319} (2017) 153-181.
\bibitem{ler4} A. K. Lerner, A weak type estimates for rough singular integrals, arXiv: 1705:07397.
%\bibitem{li2} K. Li,  Two weight inequalities for bilinear forms, Collect. Math. \textbf{68} (2017), 1-16.

\bibitem{lpr} K. Li, C. P\'erez, Isreal P. Rivera-Rios and L. Roncal, Weighted norm inequalities for rough singular integral operators, arXiv:1701.05170.
\bibitem{mar} J. M. Martell, Sharp maximal functions associated with approximations of the identity in spaces
of homogeneous type and applications, Studia Math. \textbf{161} (2004), 113-145.
\bibitem {ober}R. Oberlin, Estimates for compositions of maximal operators with singular integrals, Canad. Math. Bull. \textbf{56} (2013), 801-813.

\bibitem{pet1}S. Petermichl, The sharp bound for the Hilbert transform on weighted Lebesgue spaces in terms of
the classical $A_p$ characteristic, Amer. J. Math. \textbf{129} (2007), 1355-1375.
\bibitem{pet2}S. Petermichl, The sharp weighted bound for the Riesz transforms, Proc. Amer. Math. Soc. \textbf{136}
(2008), 1237-1249.
\bibitem{phst}D. H. Phone and E. M. Stein, Some further classes of pseudo-differential and singular integral operators arising in
boundary-value problems I,  composition of operators, Amer. J. Math. \textbf{104} (1982), 141-172.
\bibitem{saw}  E. Sawyer, Norm inequalities  relating singular integrals and the maximal function, Studia Math. \textbf{75} (1983),  253-263.

\bibitem{str} R. S. Strichartz, Compositions of singular integral operators, J. Funct. Anal. \textbf{49} (1982), 91-127.
\bibitem{wil}M. J. Wilson, Weighted inequalities for the dyadic square function without dyadic $A_{\infty}$, Duke Math. J. \textbf{55} (1987) 19-50.



\end{thebibliography}
\end{document}